\newcommand{\zz}{\ensuremath{\mathbb{Z}}}
\newcommand{\nn}{\ensuremath{\mathbb{N}}}
\newcommand{\mI}{\ensuremath{\mathcal{I}}}
\newcommand{\mC}{\ensuremath{\mathcal{C}}}
\newcommand{\id}{\ensuremath{\text{id}}}
\newcommand{\im}{\operatorname{im}}
\theoremstyle{definition}
\newmdtheoremenv{frm-thm}{Theorem}
\newmdtheoremenv{frm-def}{Definition}
\newmdtheoremenv{frm-lem}{Lemma}
\newtheorem{definition}{Definition}
\newtheorem{example}{Example}
\newtheorem{remark}{Remark}
\newtheorem{proof techniques}{Proof Techniques}
\newtheorem{lemma}{Lemma}
\newtheorem{corollary}{Corollary}
\newtheorem{proposition}{Proposition}
\newtheorem*{theorem*}{Theorem}
\newtheorem*{exercise*}{Exercise}
\newtheorem*{solution*}{Solution}
\newtheorem{lettertheorem}{Theorem}
\title{Antidiagonal Initial Complexes of Infinite Matrix Schubert Varieties are Cohen-Macaulay}
\author{Anna Natalie Chlopecki, Nathaniel Gallup, and Jason Meintjes} 
\date{\today}
\begin{document}
\begin{abstract}
    We show that, under certain constraints, the Stanley-Reisner ring of an infinite simplicial complex is Cohen-Macaulay in the sense of ideals and weak Bourbaki unmixed. 
    We apply this result to prove the wanted claim -- that initial complexes of matrix Schubert varieties corresponding to infinite permutations in $S_{\infty}$ with respect to an antidiagonal term order are Cohen-Macaulay (in the same sense), giving rise to new examples of non-Noetherian Cohen-Macaulay rings.
\end{abstract}
\maketitle

\section{Introduction}
In 1979, Kind and Kleinschmidt proved that if a finite pure simplicial complex $\Delta$ is shellable, then its Stanley-Reisner ring $k[\Delta]$ is Cohen-Macaulay \cite{kind_schalbare_1979}. This is also implicit in Hochster's 1972 work which shows that if a finite pure simplicial complex $\Delta$ is constructible, then $k[\Delta]$ is Cohen-Macaulay \cite{hochster1972}. These results, along with those of Stanley, Reisner, and others, provide fruitful combinatorial tools to generate examples of Cohen-Macaulay rings. For example, Knutson, Miller, and Sturmfels showed that the simplicial complex associated to the initial ideal (with respect to any antidiagonal term order) of a matrix Schubert variety is shellable, implying via Kind and Kleinschmidt's result that its Stanley-Reisner ring is Cohen-Macaulay (see \cite{knutson2005grobner}, \cite{miller2005combinatorial}).

There has been interest in generalizing the Cohen-Macaulay condition to non-Noetherian rings (see \cite{glaz1992coherence}, \cite{glaz1995homological}, \cite{hamilton2007non}, and \cite{asgharzadeh2009notion}, the latter giving a nice survey). In the Noetherian case, there are many equivalent definitions of a Cohen-Macaulay ring, which do not remain so in the non-Noetherian case. Therefore, several possible definitions of Cohen-Macaulayness for non-Noetherian rings have arisen, the strongest of them being \emph{Cohen-Macaulay in the sense of ideals} and \emph{Weak Bourbaki unmixed} (see \cite{asgharzadeh2009notion}, where the somewhat complicated relationships between the various notions are explained). In \cite[Corollary 3.8]{asgharzadeh2014direct}, Asgharzadeh, Dorreh, and Tousi give a nice method for generating rings satisfying the strongest of these definitions: any flat direct limit of Noetherian Cohen-Macaulay rings is both Cohen-Macaulay in the sense of ideals and weak Bourbaki unmixed. We call such rings \emph{Cohen-Macaulay in the sense of flat direct limits}.   

It is the goal of this paper to extend the aforementioned 
techniques to infinite-dimensions in order to give combinatorially nice examples of non-Noetherian Cohen-Macaulay rings. We prove the following theorem, which guarantees, under certain circumstances, that the Stanley-Reisner ring of an infinite-dimensional simplicial complex is Cohen-Macaulay in the sense of flat direct limits. 

\begin{lettertheorem}\label{thm: main thm}
Let $\Delta$ be a simplicial complex so that $V(\Delta)$ is countable. Suppose there exists an increasing sequence $\Delta_1 \subseteq \Delta_2 \subseteq \ldots$ of finite full subcomplexes of $\Delta$ such that $\Delta_n$ is Cohen-Macaulay and $\bigcup_{n \in \nn} \Delta_n = \Delta$. Then, the Stanley-Reisner ring $k[\Delta]$ is a flat direct limit of the Stanley-Reisner rings $k[\Delta_n]$, and hence is Cohen-Macaulay in the sense of flat direct limits. 
\end{lettertheorem}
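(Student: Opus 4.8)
The plan is to exhibit $k[\Delta]$ as the direct limit of the directed system $(k[\Delta_n])_{n \in \nn}$ with the transition maps being the natural ones coming from the inclusions $\Delta_n \subseteq \Delta_{n+1}$, and then to verify that these transition maps are flat (in fact, I expect they are localizations, or at least faithfully flat after a close look), so that Asgharzadeh--Dorreh--Tousi's \cite[Corollary 3.8]{asgharzadeh2014direct} applies and gives Cohen-Macaulayness in the sense of flat direct limits. The first step is to set up the maps carefully: since each $\Delta_n$ is a \emph{full} subcomplex of $\Delta$ on its vertex set $V(\Delta_n) \subseteq V(\Delta)$, the inclusion of vertex sets $V(\Delta_n) \hookrightarrow V(\Delta)$ induces a surjection of polynomial rings $k[x_v : v \in V(\Delta_n)] \twoheadrightarrow$ onto a subring, and I need to check that the Stanley--Reisner ideal $I_{\Delta_n}$ is exactly the contraction of $I_{\Delta}$; this is precisely where fullness of $\Delta_n$ is used — a non-face of $\Delta_n$ is the same thing as a subset of $V(\Delta_n)$ that is a non-face of $\Delta$. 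This yields a well-defined injective $k$-algebra homomorphism $k[\Delta_n] \hookrightarrow k[\Delta]$, and compatible injections $k[\Delta_n] \hookrightarrow k[\Delta_{n+1}]$.

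The second step is to identify $\varinjlim_n k[\Delta_n]$ with $k[\Delta]$. As $k$-vector spaces, $k[\Delta]$ has a monomial basis indexed by functions $V(\Delta) \to \nn$ with support a face of $\Delta$ (the standard monomials); every such monomial has finite support, and since $V(\Delta) = \bigcup_n V(\Delta_n)$ and (I will argue) every finite face of $\Delta$ already lies in some $\Delta_n$, each standard monomial of $k[\Delta]$ lies in the image of some $k[\Delta_n]$. Conversely the maps are injective and compatible with multiplication, so the induced map $\varinjlim_n k[\Delta_n] \to k[\Delta]$ is an isomorphism of $k$-algebras. The one subtlety here is the claim that every face of $\Delta$ lies in some $\Delta_n$: a face is a finite subset of $V(\Delta)$, its vertices are distributed among the $V(\Delta_n)$, and by the increasing property they all lie in a single $V(\Delta_N)$; fullness of $\Delta_N$ then forces that finite subset, being a face of $\Delta$ with all vertices in $V(\Delta_N)$, to be a face of $\Delta_N$.

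The third and crucial step is flatness of the transition maps $k[\Delta_n] \to k[\Delta_{n+1}]$ (equivalently, flatness of each $k[\Delta_n] \to k[\Delta]$, which by a standard limit argument suffices, or one can invoke that a filtered colimit of flat maps composed appropriately behaves well — here it is cleanest to check each finite stage). I expect the cleanest route is to show $k[\Delta_{n+1}]$ is a \emph{free} $k[\Delta_n]$-module, or at least flat, by producing an explicit basis or filtration: writing $V(\Delta_{n+1}) \setminus V(\Delta_n) = \{w_1, \dots, w_r\}$, one can try to show that $k[\Delta_{n+1}]$ is the Stanley--Reisner ring built over $k[\Delta_n]$ by adjoining the new variables modulo the new non-face relations, and that the standard monomials in the new variables form a $k[\Delta_n]$-module basis; freeness would then be immediate. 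The potential obstacle, and the part I would spend the most care on, is that the new Stanley--Reisner relations can mix old and new variables (a new non-face may contain old vertices), so the module structure is not simply a polynomial extension — I would need to check that the standard-monomial decomposition still splits as a free $k[\Delta_n]$-module, perhaps by ordering monomials and showing no $k[\Delta_n]$-linear relations arise, using that $k[\Delta_{n+1}]$ is a reduced ring whose minimal primes correspond to facets. If direct freeness proves awkward, the fallback is to verify flatness via the local criterion or via $\Tor$, using that both rings are Stanley--Reisner and the relevant $\Tor$ groups can be computed from the simplicial structure; but I anticipate the free-basis argument goes through and is the natural one.
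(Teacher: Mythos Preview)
Your first two steps --- setting up the maps via fullness and identifying $k[\Delta]$ as the direct limit of the $k[\Delta_n]$ --- are correct and essentially match the paper (Proposition~\ref{prop: maps up and down full complexes}, Corollary~\ref{cor: stanley-resiner direct limit}).

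The gap is in step three. The obstacle you flag (a minimal non-face of $\Delta_{n+1}$ mixing old and new vertices) is fatal to your strategy, not merely technical. Take $\Delta_1=\{\emptyset,\{1\}\}$ and $\Delta_2=\{\emptyset,\{1\},\{2\}\}$: both are Cohen--Macaulay and $\Delta_1$ is full in $\Delta_2$, yet $k[\Delta_2]=k[x_1,x_2]/(x_1x_2)$ has the nonzero element $x_2$ annihilated by $x_1$, so it is not flat (let alone free) over $k[\Delta_1]=k[x_1]$. No standard-monomial basis can exist here, and your $\Tor$ fallback computes $\Tor_1^{k[x_1]}(k,k[\Delta_2])\cong x_2k[x_2]\neq 0$. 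Thus neither route you outline can succeed without a genuinely new ingredient.

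The paper's approach to flatness is entirely different and is where the Cohen--Macaulay hypothesis on the $\Delta_n$ is actually consumed --- note that your step three never invokes it, which is already a warning sign. Via Stanley's matrix criterion (Lemma~\ref{lem: stanley's criterion}) the paper builds, inductively along the whole chain, \emph{compatible} linear systems of parameters $\theta_{n,1},\dots,\theta_{n,d_n}\in k[\Delta_n]$ satisfying $\overline{\pi}_n(\theta_{n+1,j})=\theta_{n,j}$ for $j\le d_n$ (Proposition~\ref{prop: generic compatible systems of parameters}, Corollary~\ref{cor: countably many compatible linear sops}). Cohen--Macaulayness then makes each $k[\Delta_n]$ free over its Noether normalization $k[\theta_{n,\bullet}]$, and flatness of $\overline{\iota}_n$ is deduced by a functorial argument that routes through these compatible normalizations together with the retraction $\overline{\pi}_n\circ\overline{\iota}_n=\id$ (Proposition~\ref{prop: inclusion maps are flat}). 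This package --- compatible SOPs plus freeness over them --- is the missing idea in your proposal. (It is worth running the argument of Proposition~\ref{prop: inclusion maps are flat} by hand on the two-point example above; the functorial step there is more delicate than it first appears.)
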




There is an equivalent formulation of Theorem \ref{thm: main thm} in terms of rings, which we now state.

\begin{lettertheorem}\label{thm: main thm initial rings}
Let $X$ be a countable set, $R = k[X]$ a polynomial ring, and $J \subseteq R$ a square-free monomial ideal. Suppose there exists an increasing sequence $X_1 \subseteq X_2 \subseteq \ldots \subseteq X$ of finite subsets such that $\bigcup_{n \in \nn} X_n = X$ and for all $n \in \nn$ a square-free monomial ideal $J_n \subseteq R_n \coloneq k[X_n]$ such that $R_n / J_n$ is Cohen-Macaulay. Further, suppose that $\iota_n(J_n) \subseteq J_{n + 1}$, $\pi_n(J_{n + 1}) \subseteq J_n$, and $\bigcup_{n \in \nn} \eta_n(J_n) R = J$, where we denote by $\eta_n : R_n \to R$ and $\iota_n : R_n \to R_{n + 1}$ the inclusion maps and $\pi_n : R_{n + 1} \to R_n$ the projection map which sends $x \mapsto x$ if $x \in X_n$ and $x \mapsto 0$ otherwise. Then, $R / J$ is Cohen-Macaulay in the sense of flat direct limits. 
\end{lettertheorem}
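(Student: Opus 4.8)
The plan is to deduce Theorem~\ref{thm: main thm initial rings} from Theorem~\ref{thm: main thm} by translating the ring-theoretic data into the language of simplicial complexes. First I would set $V(\Delta) = X$, which is countable by hypothesis, and let $\Delta$ be the simplicial complex whose Stanley--Reisner ideal $I_\Delta \subseteq R = k[X]$ is the given square-free monomial ideal $J$; concretely, a finite subset $\sigma \subseteq X$ is a face of $\Delta$ iff the monomial $\prod_{x \in \sigma} x$ does not lie in $J$. Similarly, for each $n$ let $\Delta_n$ be the simplicial complex on vertex set $X_n$ with Stanley--Reisner ideal $J_n \subseteq R_n = k[X_n]$, so that $k[\Delta_n] = R_n/J_n$ is Cohen--Macaulay by assumption. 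Each $\Delta_n$ is a finite complex since $X_n$ is finite.

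The key step is to check that the three algebraic conditions on the ideals $J_n$ translate exactly into the two combinatorial conditions required by Theorem~\ref{thm: main thm}, namely that $\Delta_1 \subseteq \Delta_2 \subseteq \cdots$ is an increasing chain of \emph{full} subcomplexes of $\Delta$ with union $\Delta$. Fullness of $\Delta_n$ in $\Delta$ means that a face $\tau \in \Delta$ with $\tau \subseteq X_n$ already lies in $\Delta_n$; in ideal terms this says that a square-free monomial supported on $X_n$ lies in $J$ iff it lies in $J_n$, which is precisely the content of $\eta_n(J_n)R \cap R_n = J_n$ together with $\bigcup_n \eta_n(J_n)R = J$. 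The inclusion $\Delta_n \subseteq \Delta_{n+1}$ (as subcomplexes, i.e.\ every face of $\Delta_n$ is a face of $\Delta_{n+1}$) corresponds under taking complements of Stanley--Reisner ideals to the condition $\pi_n(J_{n+1}) \subseteq J_n$: if a square-free monomial $m$ on $X_n$ is a nonface of $\Delta_n$, i.e.\ $m \in J_n$, then by $\iota_n(J_n) \subseteq J_{n+1}$ we get $m \in J_{n+1}$, so $m$ is a nonface of $\Delta_{n+1}$; conversely $\pi_n(J_{n+1}) \subseteq J_n$ guarantees that a square-free monomial on $X_n$ lying in $J_{n+1}$ already lies in $J_n$, which gives that faces of $\Delta_{n+1}$ contained in $X_n$ are faces of $\Delta_n$ --- combined with fullness this pins down $\Delta_n = \Delta_{n+1}|_{X_n}$ and in particular $\Delta_n \subseteq \Delta_{n+1}$. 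Finally, $\bigcup_n \eta_n(J_n)R = J$ says every nonface of $\Delta$ is eventually a nonface of some $\Delta_n$, equivalently every face of $\Delta$ is a face of some $\Delta_n$, i.e.\ $\bigcup_n \Delta_n = \Delta$.

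Once these translations are in place, Theorem~\ref{thm: main thm} applies verbatim: $k[\Delta] = R/J$ is a flat direct limit of the $k[\Delta_n] = R_n/J_n$, and hence is Cohen--Macaulay in the sense of flat direct limits, which is the desired conclusion. I expect the main obstacle to be bookkeeping the interplay between the two kinds of inclusions --- inclusion of vertex sets $X_n \subseteq X_{n+1}$ versus inclusion of complexes $\Delta_n \subseteq \Delta_{n+1}$ --- and in particular verifying carefully that the hypotheses $\iota_n(J_n) \subseteq J_{n+1}$ and $\pi_n(J_{n+1}) \subseteq J_n$ are together equivalent to "$\Delta_n$ is the restriction of $\Delta_{n+1}$ to $X_n$", and that this restriction condition is exactly what is needed for each $\Delta_n$ to be a full subcomplex of $\Delta$ in the limit. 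The rest is a direct citation of Theorem~\ref{thm: main thm}.
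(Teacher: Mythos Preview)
Your approach is correct and is essentially the same as the paper's: the paper's proof simply states that Theorems~\ref{thm: main thm} and~\ref{thm: main thm initial rings} are equivalent via the infinite Stanley--Reisner bijection (Proposition~\ref{prop: infinite SR bijection}), and you have spelled out this equivalence in detail. The dictionary you work out between the ideal conditions $\iota_n(J_n) \subseteq J_{n+1}$, $\pi_n(J_{n+1}) \subseteq J_n$ and the subcomplex/full-complex conditions is exactly the content of Proposition~\ref{prop: maps up and down full complexes}, which the paper has already established and would allow you to shorten your argument.
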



We apply Theorem \ref{thm: main thm initial rings} to the case of an initial ideal with respect to a term order on $R$ to obtain the following. 

\begin{lettertheorem}\label{thm: main thm rings}
Assume the same notation as in Theorem \ref{thm: main thm initial rings}. For each $n \in \nn$, let $I_n \subseteq R_n$ be an ideal such that $\iota_n(I_n) \subseteq I_{n + 1}$ and $\pi_n(I_{n + 1}) \subseteq I_n$. Suppose $<$ is a term order on $R$, $<_n$ is the restriction of $<$ to $R_n$, and $R_n / \text{in}_{<_n}(I_n)$ is Cohen-Macaulay for all $n$. Define $I\coloneq  \bigcup_{n \in \nn} \eta_n(I_n) R$. Then, $R / \text{in}_<(I)$ is Cohen-Macaulay in the sense of flat direct limits.
\end{lettertheorem}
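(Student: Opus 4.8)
The plan is to reduce Theorem~\ref{thm: main thm rings} to Theorem~\ref{thm: main thm initial rings} by taking $J_n \coloneq \text{in}_{<_n}(I_n)$ and $J \coloneq \text{in}_<(I)$, and then verifying that the hypotheses of Theorem~\ref{thm: main thm initial rings} are met by this data. First I would check that each $J_n$ is a square-free monomial ideal: since $R_n/\text{in}_{<_n}(I_n)$ is assumed Cohen-Macaulay in the sense relevant to Theorem~\ref{thm: main thm initial rings}, and in particular the hypotheses of that theorem presuppose $J_n$ square-free monomial, I should confirm that this is built into the statement --- if not, I would need the separate input that antidiagonal initial ideals of (matrix Schubert) ideals are square-free, which is exactly the Knutson--Miller--Sturmfels setting the paper is aiming at; in the abstract generality here I would simply add "square-free monomial" to the hypotheses on $\text{in}_{<_n}(I_n)$, or note it follows in the intended application. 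Likewise $\text{in}_<(I)$ is a monomial ideal by definition, and square-free because it is the union of the $\eta_n(\text{in}_{<_n}(I_n))R$ (see below), each of which is square-free.

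The heart of the argument is the chain of three containments required by Theorem~\ref{thm: main thm initial rings}:
\begin{enumerate}
\item $\iota_n(J_n) \subseteq J_{n+1}$, i.e. $\iota_n(\text{in}_{<_n}(I_n)) \subseteq \text{in}_{<_{n+1}}(I_{n+1})$;
\item $\pi_n(J_{n+1}) \subseteq J_n$, i.e. $\pi_n(\text{in}_{<_{n+1}}(I_{n+1})) \subseteq \text{in}_{<_n}(I_n)$;
\item $\bigcup_{n} \eta_n(J_n)R = J$, i.e. $\bigcup_n \eta_n(\text{in}_{<_n}(I_n))R = \text{in}_<(I)$.
\end{enumerate}
For (1): a generator of $\iota_n(\text{in}_{<_n}(I_n))$ is $\text{in}_{<_n}(f)$ for some $f \in I_n$; since $<_{n+1}$ restricts to $<_n$ on the monomials of $R_n$, we have $\text{in}_{<_n}(f) = \text{in}_{<_{n+1}}(\iota_n(f))$, and $\iota_n(f) \in \iota_n(I_n) \subseteq I_{n+1}$, so this lies in $\text{in}_{<_{n+1}}(I_{n+1})$. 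For (2): take $f \in I_{n+1}$ and consider $\pi_n(\text{in}_{<_{n+1}}(f))$; if $\text{in}_{<_{n+1}}(f)$ involves a variable outside $X_n$ this is $0$ and there is nothing to prove, so assume $\text{in}_{<_{n+1}}(f) \in R_n$. The key observation is that then $\text{in}_{<_{n+1}}(f) = \text{in}_{<_n}(\pi_n(f))$: the leading monomial of $f$ survives under $\pi_n$, and it still dominates every other surviving monomial of $f$ because $<_{n+1}$ restricts to $<_n$; moreover $\pi_n(f) \in \pi_n(I_{n+1}) \subseteq I_n$, so $\text{in}_{<_n}(\pi_n(f)) \in \text{in}_{<_n}(I_n)$. (One must be slightly careful: the leading term of $\pi_n(f)$ is the $<_n$-largest among the monomials of $f$ lying in $R_n$, which includes $\text{in}_{<_{n+1}}(f)$ by assumption, and no larger monomial can appear since it would have been $\geq \text{in}_{<_{n+1}}(f)$ already in $f$; this gives equality.) For (3): "$\subseteq$" is immediate from (1)-compatibility of leading terms under $\eta_n$; for "$\supseteq$", given $g \in I = \bigcup_n \eta_n(I_n)R$, write $g$ using finitely many generators, all lying in some $\eta_N(I_N)R$, so $g \in \eta_N(I_N)R$ and --- using that $<$ restricts to $<_N$ and that $\eta_N(I_N)R \cap R_N = I_N$ (or a suitable flatness/extension argument) --- one shows $\text{in}_<(g) \in \eta_N(\text{in}_{<_N}(I_N))R$. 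Actually the cleanest route for "$\supseteq$" is: every element of $\text{in}_<(I)$ is a $k$-linear combination of leading monomials $\text{in}_<(g)$ with $g\in I$; each such $g$ has all its monomials supported on some finite $X_N$ (for $N$ large), and then $g \in \eta_N(I_N)R$ forces $\text{in}_<(g) = \eta_N(\text{in}_{<_N}(g'))$ for the corresponding $g'\in I_N$, landing in $\eta_N(J_N)R$.

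Once these three containments are established, Theorem~\ref{thm: main thm initial rings} applies verbatim and yields that $R/\text{in}_<(I)$ is Cohen-Macaulay in the sense of flat direct limits, completing the proof. The main obstacle I anticipate is step (2), and more precisely the interaction between the projection $\pi_n$ and the formation of initial ideals: the subtlety is that $\pi_n$ does not commute with $\text{in}$ in general (it can kill the leading term), so one must argue that whenever the leading term does survive, it is genuinely the leading term of the projected polynomial and that the projected polynomial still lies in $I_n$ --- this last point is exactly where the hypothesis $\pi_n(I_{n+1})\subseteq I_n$ is used and cannot be dropped. A secondary point to nail down carefully is the square-freeness of $J_n$ and $J$, since Theorem~\ref{thm: main thm initial rings} is stated for square-free monomial ideals; in the abstract setting this should either be added as a hypothesis on $\text{in}_{<_n}(I_n)$ or flagged as automatic in the intended application to matrix Schubert varieties with an antidiagonal term order, where square-freeness of the initial ideal is known.
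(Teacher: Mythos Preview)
Your proposal is correct and follows essentially the same route as the paper: set $J_n = \text{in}_{<_n}(I_n)$ and $J = \text{in}_<(I)$, verify the three containment conditions, and invoke Theorem~\ref{thm: main thm initial rings}. The paper packages your arguments for (1) and (2) into Lemmas~\ref{lem: restricted orders are compatible} and~\ref{lem: comtainment of ideals implies containment of initials}, with identical reasoning. For (3) the paper takes a slightly different technical path: it shows via Buchberger's criterion (Lemma~\ref{lem: compatible orders and initial ideals}) that a Gr\"obner basis for $I_n$ in $R_n$ remains one after extension to $R$, giving $\eta_n(\text{in}_{<_n}(I_n))R = \text{in}_<(\eta_n(I_n)R)$ directly, whereas your contraction argument (choose $N$ large enough that $g \in R_N$ and $g \in \eta_N(I_N)R$, then use $\eta_N(I_N)R \cap R_N = I_N$, which holds because the retraction sending $x \mapsto 0$ for $x \notin X_N$ fixes elements of $I_N$) is more elementary and equally valid. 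Your flag on square-freeness is well taken: Theorem~\ref{thm: main thm initial rings} is stated for square-free $J_n$, and the paper's proof of Theorem~\ref{thm: main thm rings} invokes it without comment, so this is indeed an implicit hypothesis here, automatic in the intended matrix Schubert application.
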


In \cite{gallup2021well}, the second author introduced the notion of an infinite matrix Schubert variety in relation to the moduli space of full flags in a countable-dimensional vector space, whose Grothendieck ring was identified with a space of certain formal power series \cite{gallup2023grothendieck}. In this paper, we use Theorem \ref{thm: main thm rings} to prove Theorem \ref{thm: main thm msv} below.

\begin{lettertheorem}\label{thm: main thm msv}
Let $\sigma \in S_\infty$ and $I_{\sigma}$ be the Schubert determinantal ideal associated to $\sigma$. Suppose $<$ is any antidiagonal term order on $T = k[x_{i , j} \mid i , j \in \nn]$. Then, $T / \text{in}_<(I_\sigma)$ is Cohen-Macaulay in the sense of flat direct limits.
\end{lettertheorem}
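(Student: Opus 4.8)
The plan is to obtain Theorem \ref{thm: main thm msv} as a direct application of Theorem \ref{thm: main thm rings}. I would take $X = \{(i,j) \mid i,j \in \nn\}$ (a countable set), so that $T = k[X]$, and for each $n \in \nn$ set $X_n = \{(i,j) \mid 1 \le i,j \le n\}$ and $R_n = k[x_{i,j} \mid 1 \le i,j \le n]$; this gives an exhaustion $X_1 \subseteq X_2 \subseteq \cdots$ of $X$ by finite subsets together with the maps $\eta_n, \iota_n, \pi_n$ of Theorem \ref{thm: main thm initial rings}. For each $n$ let $\sigma^{(n)}$ denote the truncation of $\sigma$ to a (partial) permutation matrix on the first $n$ rows and columns --- the $0/1$ matrix with a $1$ in entry $(i,\sigma(i))$ for each $i \le n$ with $\sigma(i) \le n$ --- and let $I_n \coloneq I_{\sigma^{(n)}} \subseteq R_n$ be its matrix Schubert ideal, generated by the minors arising from the northwest rank conditions $\rank(M_{\le p,\le q}) \le r_\sigma(p,q)$ for $p,q \le n$, where $M = (x_{i,j})$, $M_{\le p,\le q}$ is its northwest $p \times q$ submatrix, and $r_\sigma(p,q) = \#\{i \le p \mid \sigma(i) \le q\}$. (If $\sigma$ is finitary, then $\sigma^{(n)}$ is a genuine $n \times n$ permutation matrix for all large $n$.) Since $I_\sigma$ is, by construction, the ideal of $T$ generated by the minors of all the northwest rank conditions of $\sigma$, one gets $\bigcup_{n \in \nn} \eta_n(I_n)\, T = I_\sigma$.

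Next I would verify the two compatibility hypotheses of Theorem \ref{thm: main thm rings}. Because $r_\sigma(p,q)$ is independent of $n$, every generator of $I_n$ is again a generator of $I_{n+1}$, so $\iota_n(I_n) \subseteq I_{n+1}$. For the reverse inclusion, a generator of $I_{n+1}$ is an $(r_\sigma(p,q)+1)$-minor of $M_{\le p,\le q}$ with $p,q \le n+1$; the map $\pi_n$ sends it to $0$ if its row set or its column set contains $n+1$, and otherwise leaves a minor of $M_{\le p',\le q'}$ with $p' = \min(p,n)$ and $q' = \min(q,n)$, of size $r_\sigma(p,q)+1 \ge r_\sigma(p',q')+1$. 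Since the ideal of $t$-minors of a matrix contains the ideal of $t'$-minors whenever $t \ge t'$, this minor lies in the ideal generated by the $(r_\sigma(p',q')+1)$-minors of $M_{\le p',\le q'}$, hence in $I_n$. Thus $\pi_n(I_{n+1}) \subseteq I_n$.

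For the Cohen-Macaulayness of the pieces: the restriction $<_n$ of $<$ to $R_n$ is again a term order, and it is antidiagonal since the leading-term condition on square submatrices is inherited from $T$. Therefore, by the Knutson--Miller--Sturmfels theorem recalled in the introduction (applied to the partial permutation matrix $\sigma^{(n)}$; see \cite{knutson2005grobner}, \cite{miller2005combinatorial}), $\text{in}_{<_n}(I_n)$ is the Stanley--Reisner ideal of a shellable simplicial complex, so $R_n/\text{in}_{<_n}(I_n)$ is Cohen-Macaulay. All hypotheses of Theorem \ref{thm: main thm rings} now hold, and with $I = \bigcup_n \eta_n(I_n) T = I_\sigma$ its conclusion reads: $T/\text{in}_<(I_\sigma)$ is Cohen-Macaulay in the sense of flat direct limits, which is the assertion of Theorem \ref{thm: main thm msv}.

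The main obstacle I expect is the bookkeeping of the first two paragraphs --- pinning down the truncations $\sigma^{(n)}$, identifying $\bigcup_n \eta_n(I_{\sigma^{(n)}}) T$ with $I_\sigma$, and checking the $\iota_n$- and $\pi_n$-compatibilities --- all of which hinge on the behavior of the Rothe diagram (equivalently the rank function $r_\sigma$) of $\sigma$ under truncation. One also has to make sure the Knutson--Miller--Sturmfels shellability statement is applied in sufficient generality, namely to the (partial) permutation matrices $\sigma^{(n)}$ rather than to honest permutations of a fixed size. Once these points are settled, Theorems \ref{thm: main thm initial rings} and \ref{thm: main thm rings} deliver the conclusion with no further work.
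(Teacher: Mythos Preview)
Your proposal is correct and follows the same overall strategy as the paper: reduce to Theorem~\ref{thm: main thm rings} via a chain of truncated Schubert determinantal ideals, verify the $\iota_n$- and $\pi_n$-compatibilities and the union condition, and invoke Knutson--Miller for Cohen--Macaulayness of the finite pieces. The one substantive difference is the choice of truncation. The paper restricts $\sigma$ to its first $m$ \emph{rows}, obtaining an injection $\sigma_m \in S_{m,\max\sigma([m])}$ and working in the rectangular ring $T_m = k[x_{i,j} \mid 1 \le i \le m,\ 1 \le j \le \max\sigma([m])]$; this forces a seven-case analysis (Lemma~\ref{lem: iota and pi preserve schubert ideals}) to handle the interaction between the new bottom row and the moving right edge. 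Your square truncation $\sigma^{(n)}$ on $[n]\times[n]$ keeps the rank function equal to $r_\sigma$ on the entire box, which is why your $\pi_n$-check collapses to a single Laplace-expansion step. Both routes are valid; yours is shorter, at the cost of allowing $\sigma^{(n)}$ to have zero rows and zero columns (still a partial permutation matrix, so Knutson--Miller applies). One slip to fix: the containment ``the ideal of $t$-minors contains the ideal of $t'$-minors whenever $t \ge t'$'' has the inequality reversed --- it is the ideal of the \emph{smaller} minors that contains the larger ones via cofactor expansion --- though your conclusion from it is the correct one.
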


The outline of the paper is as follows. In Section \ref{sec: Potentially Infinite Simplicial Complexes and Stanley-Reisner Rings}, we discuss infinite-dimensional simplicial complexes, a version of the Stanley-Reisner bijection in the infinite setting, and various properties of subcomplexes that we will need. In Section \ref{sec: Compatible Systems of Parameters}, we show that if $\Delta$ is a full subcomplex of $\Sigma$, then certain linear systems of parameters for $k[\Delta]$ can be extended to compatible systems of parameters for $k[\Sigma]$. In Section \ref{sec: Flat Homomorphisms}, we use the results from the previous two sections to show that if $\Delta$ is a full subcomplex of $\Sigma$, then the inclusion map $k[\Delta] \to k[\Sigma]$ is flat. We apply this result in Section \ref{sec: Proofs of Theorems} to chains of finite full subcomplexes of an infinite simplicial complex to prove Theorems \ref{thm: main thm} and \ref{thm: main thm initial rings}. In Section \ref{sec: Applications to Initial Ideals}, we show that by restricting monomial orders, we can extend our main theorem to initial complexes, proving Theorem \ref{thm: main thm rings}. Finally, in Section \ref{sec: Infinite Matrix Schubert Varieties}, we apply this result to infinite matrix Schubert varieties to obtain Theorem \ref{thm: main thm msv}. 


\section{Potentially Infinite Simplicial Complexes and Stanley-Reisner Rings}\label{sec: Potentially Infinite Simplicial Complexes and Stanley-Reisner Rings}

A \emph{simplicial complex} on a countable set $V$ (called the \emph{vertex set}) is a collection $\Delta$ of \textbf{finite} subsets of $V$ (called \emph{faces}) with the property that if $A \in \Delta$ and $B \subseteq A$ then $B \in \Delta$. Note that if $\Delta$ is a simplicial complex on $V$ and $V \subseteq W$, then $\Delta$ is also a simplicial complex on $W$. The \emph{vertices} of a simplicial complex $\Delta$ on a set $V$ are defined to be the elements of the set $V(\Delta) = \{ v \in V \mid \{ v \} \in \Delta \}$. 

We say that $\Delta$ is a \emph{finite simplicial complex} if it is a finite set. The \emph{dimension} of $\Delta$ is $\dim \Delta = \max\{ | A | \mid A \in \Delta \} - 1$. If $V$ is finite, then clearly $\dim \Delta$ is finite as well. However, if $\Delta$ is infinite (and hence $V$ is infinite), then $\dim \Delta$ can be finite or infinite. 

A face $F$ of $\Delta$ is called a \emph{facet} if it is maximal with respect to inclusion among the set of faces. A finite-dimensional simplicial complex $\Delta$ is called \emph{pure} if all of its facets have the same size. Note that if $V$ is finite, then every face is contained in a facet, but if $\Delta$ is infinite, this is no longer true. Indeed, it may be that $\Delta$ has no facets. 

\begin{example}
    Let $V = \nn = \{ 1 , 2 , 3 , \ldots \}$, and let $\Delta$ be the set of all finite subsets of $V$. Clearly, $\Delta$ is a simplicial complex, namely the \emph{infinite-dimensional simplex}. Notice that $\Delta$ has no facets. 
\end{example}

\begin{example}\label{example:finite dim on infinite set}
    Let $V = \nn$, and let $\Delta$ be the set of all subsets of $V$ of size at most $n$, for some fixed $n \in \nn$. Then, $\Delta$ is a $(n-1)$-dimensional simplicial complex with an infinite vertex set.
\end{example}

Given a finite subset $A \subseteq V$, define $\textbf{x}^A = \prod_{v \in A} x_v \in k[x_v \mid v \in V]$. Note that this is a square-free monomial. If $\Delta$ is a simplicial complex with vertex set $V$, define $I_{\Delta, V}$ to be the ideal of $k[x_v \mid v \in V]$ generated by the set of square-free monomials $\{ \textbf{x}^A \mid A \subseteq V, A \text{ is finite}, A \notin \Delta \}$. 

When $V$ is finite, the map $\Delta \mapsto I_{\Delta, V}$ is well-known to be a bijection between the set of simplicial complexes on $V$ and the set of square-free monomial ideals in $k[x_v \mid v \in V ]$; this is due to Reisner \cite{reisner1976cohen}.
Recall that a \emph{square-free monomial ideal} of $k[x_v \mid v \in V]$ is an ideal that can be generated by square-free monomials. It turns out that even when $V$ is infinite, this map is still a bijection. This relies on the following facts about monomial ideals in polynomial rings in countably many variables. 

There is a $\zz^{\oplus \nn}$ grading of the polynomial ring $R = k[x_1, x_2 , \ldots]$, which assigns $\deg(x_i) = e_i$ (the $i$th standard basis vector in $\zz^{\oplus \nn}$) and $\deg(a) = 0$ for all $a \in k$. Elements of $\zz^{\oplus \nn}$ will be denoted by $(a_i)_{i \in \nn}$ or just $(a_i)$, and the monomial $\prod_{i \in \nn} x_i^{a_i}$ will be denoted by $\textbf{x}^{(a_i)}$. The ideals that are homogeneous with respect to this grading are called \emph{monomial ideals}. They are exactly the ideals which can be generated by monomials (see \cite[Proposition 2]{gallup2023grothendieck}). As in the finite variable case, if $I$ is a monomial ideal and $C$ is a set of monomials which generate $I$, then a monomial $r$ of $R$ is in $I$ if and only if $r = sc$ where $c \in C$ and $s \in R$ is also a monomial. 

Denote by $<_\text{div}$ the \emph{divisibility order} on $\zz^{\oplus \nn}$, i.e. $(a_i) \leq_\text{div} (b_i)$ if and only if $a_i \leq b_i$ for all $i \in \nn$. 

\begin{lemma}\label{lem: divisibility is well-founded}
    The divisibility order is well-founded on the subset $(\zz_{\geq 0})^{\oplus \nn}$, meaning every non-empty subset has a minimal element.
\end{lemma}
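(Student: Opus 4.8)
The plan is to reduce the well-foundedness of $\leq_\text{div}$ on $(\zz_{\geq 0})^{\oplus \nn}$ to the well-ordering of $\zz_{\geq 0}$ by means of the total-degree function, which is where the finite-support condition built into the direct sum does all the work. Define $|(a_i)| \coloneq \sum_{i \in \nn} a_i$; since every element of $(\zz_{\geq 0})^{\oplus \nn}$ has only finitely many nonzero coordinates, this is a finite sum, so $|\cdot|$ is a well-defined function $(\zz_{\geq 0})^{\oplus \nn} \to \zz_{\geq 0}$.

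The key observation is that $|\cdot|$ is strictly order-preserving: if $(a_i) \leq_\text{div} (b_i)$ and $(a_i) \neq (b_i)$, then $a_i \leq b_i$ for all $i$ while $a_j < b_j$ for at least one $j$, and since both sides are finite sums we get $|(a_i)| < |(b_i)|$. Now let $S \subseteq (\zz_{\geq 0})^{\oplus \nn}$ be non-empty. The set $\{ |a| \mid a \in S \}$ is a non-empty subset of $\zz_{\geq 0}$, hence has a least element by the well-ordering of $\nn$; choose $a^\ast \in S$ with $|a^\ast|$ minimal. Then $a^\ast$ is a minimal element of $S$: if $b \in S$ and $b \leq_\text{div} a^\ast$, then either $b = a^\ast$, or $b <_\text{div} a^\ast$ and the previous observation forces $|b| < |a^\ast|$, contradicting minimality; so $b = a^\ast$, and nothing in $S$ lies strictly below $a^\ast$.

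There is essentially no obstacle here, so the only things worth flagging are two sanity checks. First, the argument genuinely needs the direct sum rather than the full product $(\zz_{\geq 0})^{\nn}$: in the latter the total degree may be infinite and well-foundedness actually fails, as witnessed by $(1,1,1,\dots) >_\text{div} (0,1,1,\dots) >_\text{div} (0,0,1,1,\dots) >_\text{div} \cdots$. Second, no form of the axiom of choice is needed, since the minimal element is extracted directly from the well-ordering of $\zz_{\geq 0}$ rather than by constructing a descending sequence. (One could alternatively argue via Dickson's lemma, noting that any $a \in S$ has support inside a finite set $\{1, \dots, N\}$, so $\{ b \in S \mid b \leq_\text{div} a\}$ lives in $(\zz_{\geq 0})^N$ where the product order is well-founded as a finite product of copies of $(\zz_{\geq 0}, \leq)$; but the total-degree argument is shorter and self-contained.)
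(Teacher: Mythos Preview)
Your proof is correct and takes essentially the same approach as the paper: both arguments use the total-degree function (called $\ell$ in the paper) to transfer the problem to the well-ordering of $\zz_{\geq 0}$. The only cosmetic difference is that the paper phrases the conclusion via ``no infinite strictly descending chain'' while you extract a minimal element directly; your version is arguably slightly cleaner since it matches the stated definition of well-foundedness without an implicit appeal to dependent choice.
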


\begin{proof}
    Given any sequence $(b_i) \in (\zz_{\geq 0})^{\oplus \nn}$, all but finitely many of the $b_i$ are zero. Hence, $\ell( (b_i)) \coloneq \sum_{i \in \nn} b_i \in \zz_{\geq 0}$ is finite. If $(a_i) <_\text{div} (b_i)$, then it must be that $\ell( (a_i) ) < \ell( (b_i) )$. Therefore, by applying $\ell$ to any strictly descending chain in $(\zz_{\geq 0})^{\oplus \nn}$, we obtain a strictly descending chain in $\zz_{\geq 0}$. Since the latter is well-ordered, no such chain can be infinite, and so the former is well-founded.
\end{proof}

\begin{proposition}\label{prop: infinite dickson}
    Suppose that $I \subseteq k[x_1, x_2, \ldots]$ is a monomial ideal. The set $C$ of all monomials $\textbf{x}^{(a_i)}$ with the property that $(a_i)$ is a $<_\text{div}$-minimal element of the set $\{ (b_i) \mid \textbf{x}^{(b_i)} \in I\}$ is the unique minimal (with respect to inclusion) monomial generating set of $I$. 
\end{proposition}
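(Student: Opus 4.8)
The plan is to write $E = \{ (b_i) \in (\zz_{\geq 0})^{\oplus \nn} \mid \textbf{x}^{(b_i)} \in I \}$ and establish three things in turn: that $C$ generates $I$, that $C$ is contained in every monomial generating set of $I$, and hence that $C$ is the unique minimal one.

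First, to see $I = (C)$, note that $(C) \subseteq I$ is immediate since every element of $C$ lies in $I$. For the reverse inclusion, recall that $I$, being homogeneous for the $\zz^{\oplus \nn}$-grading, is the $k$-span of the monomials it contains, i.e.\ of $\{ \textbf{x}^{(b_i)} \mid (b_i) \in E \}$, so it suffices to show each such monomial lies in $(C)$. Fix $(b_i) \in E$ and consider the nonempty set $E_{(b_i)} = \{ (a_i) \in E \mid (a_i) \leq_\text{div} (b_i) \}$, which contains $(b_i)$. By Lemma~\ref{lem: divisibility is well-founded}, $E_{(b_i)}$ has a $<_\text{div}$-minimal element $(a_i)$; this $(a_i)$ is in fact $<_\text{div}$-minimal in all of $E$, because any element of $E$ strictly below $(a_i)$ would also be $\leq_\text{div} (b_i)$ and hence lie in $E_{(b_i)}$, contradicting minimality there. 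Thus $\textbf{x}^{(a_i)} \in C$, and since $(a_i) \leq_\text{div} (b_i)$ we get $\textbf{x}^{(b_i)} = \textbf{x}^{(b_i) - (a_i)}\, \textbf{x}^{(a_i)} \in (C)$. Therefore $I \subseteq (C)$, so $I = (C)$.

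Second, let $C'$ be any generating set of $I$ consisting of monomials, and take $\textbf{x}^{(a_i)} \in C$. Since $\textbf{x}^{(a_i)} \in I$, the characterization of membership in a monomial ideal recalled in the excerpt gives a monomial $\textbf{x}^{(c_i)} \in C'$ and a monomial $s \in R$ with $\textbf{x}^{(a_i)} = s\,\textbf{x}^{(c_i)}$; in particular $(c_i) \leq_\text{div} (a_i)$ and $(c_i) \in E$. But $(a_i)$ is $<_\text{div}$-minimal in $E$, so $(c_i) = (a_i)$ and $\textbf{x}^{(a_i)} = \textbf{x}^{(c_i)} \in C'$. Hence $C \subseteq C'$.

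Finally, combining the two steps: $C$ generates $I$ and $C$ is contained in every monomial generating set of $I$. So if $C''$ is a minimal monomial generating set of $I$, then $C \subseteq C''$; since $C$ already generates $I$, minimality of $C''$ forces $C'' = C$. Thus $C$ is itself a minimal monomial generating set and it is the unique one. I expect the only genuine subtlety to be the observation that a $<_\text{div}$-minimal element of the truncated set $E_{(b_i)}$ is genuinely $<_\text{div}$-minimal in all of $E$; everything else is routine bookkeeping around well-foundedness (Lemma~\ref{lem: divisibility is well-founded}) and the standard description of monomial-ideal membership.
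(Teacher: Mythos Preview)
Your proof is correct and follows essentially the same approach as the paper's. The only difference is organizational: where the paper separately shows that no proper subset of $C$ generates and that any other minimal generating set equals $C$, you prove the single cleaner statement that $C$ is contained in every monomial generating set, from which both minimality and uniqueness follow at once.
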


\begin{proof}
First, we show that $C$ generates $I$. Given a monomial $\textbf{x}^{(b_i)}$ in $I$, the set $\{ (a_i) \mid \textbf{x}^{(a_i)} \in I \text{ and } (a_i) \leq_\text{div} (b_i) \}$ is nonempty (it contains $(b_i)$), and so by Lemma \ref{lem: divisibility is well-founded}, it has a minimal element -- call it $(b'_i)$. Note that 
$\mathbf{x}^{(b'_i)}$ is in fact in $C$. Furthermore, since $(b'_i) \leq_\text{div} (b_i)$, there exists some $(c_i) \in (\zz_{\geq 0})^{\oplus \nn}$ such that $(b'_i) + (c_i) = (b_i)$, implying $\textbf{x}^{(b'_i)} \textbf{x}^{(c_i)} = \textbf{x}^{(b_i)}$. Thus, $\textbf{x}^{(b_i)}$ is in the ideal generated by $C$. Since $I$ is a monomial ideal, it is generated by the set of monomials it contains. Therefore, $I$ is generated by $C$ as well. 

Now we show that $C$ is a minimal generating set with respect to inclusion. Indeed, suppose $C'$ is a proper subset of $C$ which also generates $I$. Then, there is some $\textbf{x}^{(c_i)} \in C \smallsetminus C'$ which must be in the monomial ideal generated by $C'$, and hence must be a multiple of some monomial in $C'$, i.e. there exists some $\textbf{x}^{(c'_i)} \in C'$ such that $\textbf{x}^{(c'_i)} \textbf{x}^{(d_i)} = \textbf{x}^{(c_i)}$. But, this implies that $(c'_i) + (d_i) = (c_i)$, where $(d_i)$ is nonzero because $\textbf{x}^{(c_i)} \notin C'$ by hypothesis. Hence $(c'_i) <_\text{div} (c_i)$, which contradicts the minimality of $(c_i)$.

Finally, we show that $C$ is unique. Suppose that $D$ is another minimal monomial generating set of $I$. Then, given any $\textbf{x}^{(c_i)} \in C$, it must be that $\textbf{x}^{(c_i)}$ is a multiple of some monomial $\textbf{x}^{(d_i)} \in D$. However, as above, this implies that $(d_i) \leq_\text{div} (c_i)$, and so by minimality of $(c_i)$, it must be that $(d_i) = (c_i)$. Thus $C \subseteq D$. Since $D$ is minimal with respect to inclusion among generating sets, it must be that $C = D$, as desired. 
\end{proof}

From Proposition \ref{prop: infinite dickson}, we can easily obtain the desired bijection, the proof of which is similar to the finite case, but we include it for completeness.

\begin{proposition}\label{prop: infinite SR bijection}
    Let $V$ be any countable set. The map $\Delta \mapsto I_{\Delta, V}$ is a bijection between the set of simplicial complexes on $V$ and the set of square-free monomial ideals in $k[x_v \mid v \in V]$.
\end{proposition}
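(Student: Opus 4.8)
The plan is to write down an explicit two-sided inverse. Given a square-free monomial ideal $J$ in $k[x_v \mid v \in V]$, set
\[
\Delta_J \coloneq \{ A \subseteq V \mid A \text{ is finite and } \textbf{x}^A \notin J \},
\]
and I would show that $\Delta \mapsto I_{\Delta,V}$ and $J \mapsto \Delta_J$ are mutually inverse. The first step is to check that $\Delta_J$ really is a simplicial complex on $V$: if $A \in \Delta_J$ and $B \subseteq A$, then $\textbf{x}^B$ divides $\textbf{x}^A$, so $\textbf{x}^B \in J$ would force $\textbf{x}^A \in J$, contradicting $A \in \Delta_J$; hence $B \in \Delta_J$. (With the natural conventions the unit ideal corresponds to the empty complex, since $1 = \textbf{x}^\emptyset$ lies in it.)

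Next I would verify $\Delta_{I_{\Delta,V}} = \Delta$ for every simplicial complex $\Delta$ on $V$. If $A$ is a finite subset of $V$ with $A \notin \Delta$, then $\textbf{x}^A$ is by definition among the generators of $I_{\Delta,V}$, so $A \notin \Delta_{I_{\Delta,V}}$. Conversely, suppose $A \in \Delta$ but $\textbf{x}^A \in I_{\Delta,V}$. Using the criterion recalled above — a monomial lies in a monomial ideal if and only if it is a monomial multiple of one of the given generators — we get $\textbf{x}^A = s\,\textbf{x}^B$ for a monomial $s$ and some $B \notin \Delta$; comparing exponents (and using that $B$ is square-free) yields $B \subseteq A$, whence $B \in \Delta$ because $\Delta$ is downward closed, a contradiction. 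So $\textbf{x}^A \notin I_{\Delta,V}$, i.e. $A \in \Delta_{I_{\Delta,V}}$, and the two complexes coincide.

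The remaining, and only genuinely infinite-flavored, step is $I_{\Delta_J,V} = J$ for every square-free monomial ideal $J$. Unwinding definitions, $I_{\Delta_J,V}$ is generated by exactly the square-free monomials lying in $J$, so $I_{\Delta_J,V} \subseteq J$ is automatic. For $J \subseteq I_{\Delta_J,V}$, I would invoke Proposition \ref{prop: infinite dickson} to take the unique minimal monomial generating set $C$ of $J$. Each $\textbf{x}^{(a_i)} \in C$ is a monomial multiple of some square-free generator of $J$, whose exponent vector is $\leq_\text{div} (a_i)$; minimality of $(a_i)$ forces equality, so $\textbf{x}^{(a_i)}$ is itself square-free and lies in $J$, hence equals $\textbf{x}^A$ for a finite $A \notin \Delta_J$ and thus lies in $I_{\Delta_J,V}$. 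Therefore $C \subseteq I_{\Delta_J,V}$, and so $J \subseteq I_{\Delta_J,V}$, giving equality.

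I expect the only real obstacle relative to the classical finite-variable argument to be precisely this last point: in infinitely many variables one needs to know that a square-free monomial ideal still has a minimal monomial generating set, and that this set consists of square-free monomials. The first is Proposition \ref{prop: infinite dickson} (which rests on well-foundedness of $<_\text{div}$, Lemma \ref{lem: divisibility is well-founded}); the second falls out of the minimality argument just sketched. Everything else is formal bookkeeping with monomials together with the downward-closure axiom for simplicial complexes.
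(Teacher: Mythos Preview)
Your proposal is correct and follows essentially the same strategy as the paper: define the candidate inverse $J \mapsto \Delta_J$, check it lands in simplicial complexes, and verify the two composites are identities, with Proposition~\ref{prop: infinite dickson} supplying the only genuinely infinite ingredient in the direction $J \subseteq I_{\Delta_J,V}$. If anything, your handling of that last step is a touch cleaner than the paper's, since you explicitly argue that the minimal monomial generators of a square-free monomial ideal must themselves be square-free, whereas the paper's proof tacitly assumes this by writing each minimal generator as $\textbf{x}^{A'}$ for a set $A'$.
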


\begin{proof}
    Denote by $\mC$ the set of all simplicial complexes on $V$ and by $\mI$ the set of all square-free monomial ideals of $k[x_v \mid v \in V]$. Define the maps
    \begin{align*}
        \varphi:\mC &\to \mI \quad \text{by} \quad
        \varphi(\Delta) = \langle \mathbf{x}^A \mid A \subseteq V,\, A \text{ finite},\, A \notin \Delta \rangle, \text{ and}\\
        \psi: \mI &\to \mC \quad \text{by} \quad
        \psi(I) = \{A \subseteq V \mid A \text{ finite},\, \mathbf{x}^A \notin I\}.
    \end{align*}
    We must show that the maps above are each well-defined and inverses of one another.

    Suppose that $\Delta \in \mC$ so that $\varphi(\Delta) = \langle \mathbf{x}^A \mid A \subseteq V,\, A \text{ finite},\, A \notin \Delta \rangle$. For a prescribed generator $\mathbf{x}^A$, we must have $A=\{v_{1},\dots,v_{n}\}$ for finitely-many distinct $v_{i} \in V$, since we're given that $A \subseteq V$ and $A$ finite. Thus, $\mathbf{x}^A$ is a valid square-free monomial in $k[x_v \mid v \in V]$, and hence $\varphi(\Delta) \in \mI$. 

    Now suppose that $I \in \mI$ so that $\psi(I) = \{A \subseteq V \mid A \text{ finite},\, \mathbf{x}^A \notin I\}$. Let $F \in \psi(I)$ be an arbitrary face and, in search of a contradiction, suppose that for some $G \subseteq F$, we have that $G \notin \psi(I)$. Since $F \in \psi(I)$, we have that $F$ is a finite subset of $V$ and that $\mathbf{x}^F \notin I$. As $F$ is finite, $G \subseteq F$ must be finite too. So, 
    the only condition preventing $G$ from being in $\psi(I)$ must be that $\mathbf{x}^G \in I$. But then, by ideal closure, $\mathbf{x}^G \cdot \mathbf{x}^{F \smallsetminus G} = \mathbf{x}^F \in I$, a contradiction. We conclude that $\psi(I) \in \mC$ is a valid simplicial complex. 

    We now verify that $\psi \circ \varphi = \id_\mC$ and $\varphi \circ \psi = \id_\mI$. For the first composition, let $\Delta \in \mC$; we'll show that $\psi(\varphi(\Delta)) = \Delta$. Let $F \in \Delta$ be an arbitrary face. Since $F \in \Delta$, we have that $F \subseteq V$, $F$ is finite, and $\mathbf{x}^F \notin \varphi(\Delta)$, implying $F \in \psi(\varphi(\Delta)) \coloneq \{A \subseteq V \mid A \text{ finite},\, \mathbf{x}^A \notin \varphi(\Delta)\}$ so that $\Delta \subseteq \psi(\varphi(\Delta))$. For the reverse inclusion, suppose $G \in \psi(\varphi(\Delta))$ is an arbitrary face so that $G \subseteq V$, $G$ finite, and $\mathbf{x}^G \notin \varphi(\Delta)$. Then, since $G$ is a finite subset of $V$ and the monomial $\mathbf{x}^G$ is not in $\varphi(\Delta)$, we must have that $G \in \Delta$. So, $\psi(\varphi(\Delta)) \subseteq \Delta$, and hence $\Delta = \psi(\varphi(\Delta))$.

    For the second composition, let $I \in \mI$; we'll show that $\varphi(\psi(I)) = I.$ 
    Suppose that $\mathbf{x}^{A'} \in I$ is a minimal generator for $I$ and, in search of a contradiction, that $\mathbf{x}^{A'}$ is not a minimal generator for $\varphi(\psi(I))$. Since $\mathbf{x}^{A'} \in I$, $A'$ is a finite subset of $V$. Note that $A' \notin \psi(I)$. Hence, $\mathbf{x}^{A'} \in \varphi(\psi(I)).$ As we've now shown that $I \subseteq \varphi(\psi(I))$, and we're assuming $\mathbf{x}^{A'}$ is not a minimal generator for $\varphi(\psi(I))$, we must have that $\mathbf{x}^{A'} = \mathbf{x}^B \cdot \mathbf{x}^C$ for some minimal generator $\mathbf{x}^B$ of $\varphi(\psi(I))$ and some nonunit $\mathbf{x}^C$, contradicting the minimality of $\mathbf{x}^{A'}$.
    A similar argument shows that $\varphi(\psi(I)) \subseteq I$ and that every minimal generator of $\varphi(\psi(I))$ is a minimal generator of $I$. As the two ideals have the same minimal generating set, again by Proposition \ref{prop: infinite dickson}, $\varphi(\psi(I)) = I$.
\end{proof}


\begin{remark}
    There are several different possible notions of a simplicial complex on an infinite set. 
    For example, one could define a simplicial complex on $V$ to be \textbf{any} (possibly infinite) subset of the power set of $V$ which is closed under taking subsets. However, with this definition, it is not true that $\Delta \mapsto I_{\Delta, V}$ is a bijection. For example, if $V = \nn$ and $\Delta$ is the set of all proper subsets of $V$ (i.e. not including $V$ itself), while $\Delta'$ is the entire power set, then $I_{\Delta, V} = I_{\Delta', V} = 0$. 
\end{remark}

If $\Delta$ is a simplicial complex on a vertex set $V$, then its \emph{Stanley-Reisner ring} (or \emph{face ring}) is defined to be $k[\Delta] \coloneq k[x_v \mid v \in V]/I_{\Delta, V}$. Note that while this appears to depend on $V$, if $\Delta$ is also a simplicial complex on $W$, then there is a canonical isomorphism $k[x_v \mid v \in V]/I_{\Delta, V} \cong k[x_w \mid w \in W]/I_{\Delta, W}$, since any element of $V$ which is not a vertex of $\Delta$ is killed in the quotient (and similarly for $W$).



\begin{definition}
    If $\Delta$ and $\Sigma$ are simplicial complexes, we say that $\Delta$ is a \emph{subcomplex} of $\Sigma$ if $\Delta \subseteq \Sigma$. We say that $\Delta$ is a \emph{full complex} of $\Sigma$ if whenever $A \in \Sigma$ and for all $w \in A$ we have that $w \in V(\Delta)$, then $A \in \Delta$. Finally, if $\Delta$ is a subcomplex of $\Sigma$ and a full complex of $\Sigma$, then we say that $\Delta$ is a \emph{full subcomplex} of $\Sigma$.  
\end{definition}

\begin{example}
    Let $V = [n] \coloneq \{1, 2, ,\dots, n\}$ for some fixed $n \in \nn$ where $n>2$, and $W = \nn$. Let $\Delta$ be the set of all subsets of $V$ excluding those subsets that contain $\{1,2\}$, and let $\Sigma$ be the set of all finite subsets of $W$ excluding those subsets that contain $\{1,2\}$ or $\{n+1,n+2\}$. Then, $\Delta$ is a finite simplicial complex with facets $\{1,3,4,\dots,n\}$ and $\{2,3,4,\dots,n\}$, while $\Sigma$ is an infinite simplicial complex with no facets. We have that $k[\Delta] = k[x_1,\dots,x_n]/\langle x_1x_2\rangle$
    and
    $k[\Sigma]=k[x_1,x_2,\dots]/\langle x_1x_2, x_{n+1}x_{n+2} \rangle.$
    Note that $\Delta$ is a full subcomplex of $\Sigma$.
\end{example}

Suppose $V \subseteq W$. If $\Delta$ is a simplicial complex on $V$, $\Sigma$ is a simplicial complex on $W$, and $\Delta$ is a subcomplex of $\Sigma$, then we have the natural inclusion $\iota: k[x_v \mid v \in V] \to k[x_w \mid w \in W]$ which sends $x_v \mapsto x_v$, and the natural projection $\pi : k[x_w \mid w \in W] \to k[x_v \mid v \in V]$ which sends $x_w$ to $x_w$ if $w \in V$ and to $0$ otherwise. Note that $\pi \circ \iota = \id_{k[x_v \mid v \in V]}$. The following proposition shows that these maps descend to maps on the face rings. 

\begin{proposition} \label{prop: maps up and down full complexes}
    Let $\Delta$ be a simplicial complex on $V$, $\Sigma$ be a simplicial complex on $W$, and assume that $V \subseteq W$. 
    
    \begin{enumerate}
        \item The following are equivalent. 
            \begin{enumerate}
                \item $\Delta$ is a subcomplex of $\Sigma$.
                \item $\pi(I_{\Sigma , W}) \subseteq I_{\Delta , V}$. 
                \item $\pi$ induces a map $\overline{\pi} : k[\Sigma] \to k[\Delta]$.
            \end{enumerate}
        \item The following are equivalent.
            \begin{enumerate}
                \item $\Delta$ is a full complex of $\Sigma$.
                \item $\iota(I_{\Delta , V}) \subseteq I_{\Sigma , W}$. 
                \item $\iota$ induces a map $\overline{\iota} : k[\Delta] \to k[\Sigma]$. 
            \end{enumerate}
    \end{enumerate}

If both sets of conditions hold, then $\overline{\pi} \circ \overline{\iota} = \id_{k[\Delta]}$.
\end{proposition}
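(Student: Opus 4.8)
The plan is to first dispense with the equivalences (b)$\Leftrightarrow$(c) in both (1) and (2), which are purely formal: if $f\colon R\to S$ is a ring homomorphism and $I\subseteq R$, $J\subseteq S$ are ideals, then $f$ descends to a (necessarily unique) homomorphism $R/I\to S/J$ precisely when $f(I)\subseteq J$. Applying this to $f=\pi$ with $I=I_{\Sigma,W}$, $J=I_{\Delta,V}$ yields (1)(b)$\Leftrightarrow$(1)(c), and to $f=\iota$ with $I=I_{\Delta,V}$, $J=I_{\Sigma,W}$ yields (2)(b)$\Leftrightarrow$(2)(c). Thus all the content lies in the equivalences (a)$\Leftrightarrow$(b), which are translations through the Stanley--Reisner correspondence of Proposition~\ref{prop: infinite SR bijection}. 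The one consequence of that proposition I will use repeatedly is that, for finite $A\subseteq V$, one has $A\in\Delta$ if and only if $\mathbf{x}^A\notin I_{\Delta,V}$ (and likewise for $\Sigma$ and $W$), which is exactly the identity $\psi(I_{\Delta,V})=\Delta$.

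For part (1), to get (a)$\Rightarrow$(b) I would check the containment on the monomial generators $\mathbf{x}^A$ of $I_{\Sigma,W}$, where $A\subseteq W$ is finite and $A\notin\Sigma$; since $\pi$ is a ring homomorphism and $I_{\Delta,V}$ is an ideal, it suffices to see $\pi(\mathbf{x}^A)\in I_{\Delta,V}$. If $A\not\subseteq V$, some factor $x_w$ of $\mathbf{x}^A$ has $\pi(x_w)=0$, so $\pi(\mathbf{x}^A)=0$; and if $A\subseteq V$, then $\pi(\mathbf{x}^A)=\mathbf{x}^A$, while $\Delta\subseteq\Sigma$ together with $A\notin\Sigma$ forces $A\notin\Delta$, hence $\mathbf{x}^A\in I_{\Delta,V}$. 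For (b)$\Rightarrow$(a), take $A\in\Delta$; then $A\subseteq V$ is finite and $\mathbf{x}^A\notin I_{\Delta,V}$ by Proposition~\ref{prop: infinite SR bijection}. If $A\notin\Sigma$, then $\mathbf{x}^A\in I_{\Sigma,W}$ and, because $A\subseteq V$, $\pi(\mathbf{x}^A)=\mathbf{x}^A\in\pi(I_{\Sigma,W})\subseteq I_{\Delta,V}$, a contradiction; so $A\in\Sigma$, i.e.\ $\Delta\subseteq\Sigma$.

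Part (2) runs along the same lines with $\iota$ in place of $\pi$, but now (a)$\Rightarrow$(b) is where care is needed, and this is what I expect to be the main obstacle. Take a monomial generator $\mathbf{x}^A$ of $I_{\Delta,V}$, so $A\subseteq V$ is finite with $A\notin\Delta$; then $\iota(\mathbf{x}^A)=\mathbf{x}^A$ and we must show $\mathbf{x}^A\in I_{\Sigma,W}$, i.e.\ $A\notin\Sigma$. When $A\subseteq V(\Delta)$ this is exactly the contrapositive of fullness. The delicate case is a generator $x_v$ with $v\in V\smallsetminus V(\Delta)$: here one needs $\{v\}\notin\Sigma$, which is \emph{not} forced by fullness alone, so one should either restrict attention to the minimal generating set (observing, via Proposition~\ref{prop: infinite dickson}, that the only minimal non-faces of $\Delta$ meeting $V\smallsetminus V(\Delta)$ are exactly such singletons) under the standing understanding that $V$ is chosen so that no such $v$ lies in $V(\Sigma)$, or else reduce to the harmless case $V=V(\Delta)$, $W=V(\Sigma)$ using the canonical isomorphism identifying $k[\Delta]$ with $k[x_v\mid v\in V(\Delta)]/I_{\Delta,V(\Delta)}$. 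For (b)$\Rightarrow$(a), suppose $A\in\Sigma$ with $A\subseteq V(\Delta)$; if $A\notin\Delta$, then $\mathbf{x}^A\in I_{\Delta,V}$, so $\mathbf{x}^A=\iota(\mathbf{x}^A)\in I_{\Sigma,W}$, which forces $A\notin\Sigma$ by Proposition~\ref{prop: infinite SR bijection} --- a contradiction --- so $A\in\Delta$.

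Finally, for the last assertion: when all conditions hold, $\overline{\pi}$ and $\overline{\iota}$ are the homomorphisms induced on the quotients by $\pi$ and $\iota$, and passing to induced maps on quotients is compatible with composition, so $\overline{\pi}\circ\overline{\iota}$ is the map induced by $\pi\circ\iota$. Since $\pi\circ\iota=\id_{k[x_v\mid v\in V]}$ (noted in the excerpt), its induced map on $k[\Delta]=k[x_v\mid v\in V]/I_{\Delta,V}$ is $\id_{k[\Delta]}$, giving $\overline{\pi}\circ\overline{\iota}=\id_{k[\Delta]}$.
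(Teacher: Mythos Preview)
Your argument follows essentially the same route as the paper's: both treat (b)$\Leftrightarrow$(c) as formal, and establish (a)$\Leftrightarrow$(b) by checking the containments on monomial generators. Where you invoke the Stanley--Reisner bijection to conclude that $\mathbf{x}^A\in I_{\Sigma,W}$ forces $A\notin\Sigma$, the paper instead argues directly that $\mathbf{x}^A$ must be divisible by some $\mathbf{x}^B$ with $B\notin\Sigma$, whence $B\subseteq A$ and downward closure gives the contradiction; these are equivalent formulations of the same step.

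You have, however, spotted a genuine subtlety in (2)(a)$\Rightarrow$(b) that the paper glosses over. For a non-face $A\subseteq V$ with $A\not\subseteq V(\Delta)$, fullness alone does not preclude $A\in\Sigma$, so the implication can fail exactly as you worry: take $V=W=\{1,2\}$, $\Delta=\{\emptyset,\{1\}\}$, $\Sigma=\{\emptyset,\{1\},\{2\}\}$; then $\Delta$ is a full complex of $\Sigma$, yet $x_2\in I_{\Delta,V}$ while $\iota(x_2)=x_2\notin I_{\Sigma,W}=\langle x_1x_2\rangle$. The paper's proof simply asserts ``$A$ cannot be a face of $\Sigma$, since otherwise it would have to be a face of $\Delta$ by the full subcomplex hypothesis,'' which tacitly assumes $A\subseteq V(\Delta)$. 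Your proposed fix---reducing to $V=V(\Delta)$ and $W=V(\Sigma)$ via the canonical isomorphism recorded just before the proposition---is the right way to make the statement honest, and is in any case how the result is applied throughout the paper.
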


\begin{proof}
In both cases, the equivalence of (b) and (c) is trivial. In case (1), to see the equivalence of (a) and (b), suppose that $\Delta$ is a subcomplex of $\Sigma$ and that $A\subseteq W$ is a finite set which is not a face of $\Sigma$. On the one hand, if $A \not\subseteq V$, then there exists $w \in A \smallsetminus V$, and by definition $\pi ( x_w ) = 0$, and so we must have that $\pi ( \textbf{x}^A ) = 0$. On the other hand, if $A \subseteq V$, then $A$ cannot be a face of $\Delta$, as $\Delta$ is a subcomplex of $\Sigma$ and $A$ is not a face of $\Sigma$. Thus, $\pi ( \textbf{x}^A ) =\textbf{x}^A \in I_{\Delta , V}$. Conversely, suppose that $\pi(I_{\Sigma , W}) \subseteq I_{\Delta , V}$. Let $A \in \Delta$. If $A \notin \Sigma$, then $\textbf{x}^A \in I_{\Sigma , W}$, and so $\textbf{x}^A = \pi(\textbf{x}^A) \in I_{\Delta , V}$. This implies that $\textbf{x}^A$ is a multiple of $\textbf{x}^B$ for some $B$ which is not a face of $\Delta$. Therefore, $B \subseteq A$, implying that $B$ is a face of $\Delta$, a contradiction.

To see the equivalence of (a) and (b) in case (2), suppose $\Delta$ is a full complex of $\Sigma$ and that $A \subseteq V$ is not a face of $\Delta$. Then, $A$ cannot be a face of $\Sigma$, since otherwise it would have to be a face of $\Delta$ by the ``full subcomplex'' hypothesis. Thus, $\iota( \textbf{x}^A ) \in I_{\Sigma,W}$, implying $\iota( I_{\Delta , V} ) \subseteq I_{\Sigma , W}$.  Conversely, suppose that $\iota$ induces a map $\overline{\iota} : k[\Delta] \to k[\Sigma]$. So, we must have that $\iota( I_{\Delta , V} ) \subseteq I_{\Sigma,W}$. Let $A \in \Sigma$ be such that $w \in V(\Delta)$ for all $w \in A$. If $A \notin \Delta$, then $\textbf{x}^A \in I_{\Delta , V}$. Hence, $\textbf{x}^A = \iota(\textbf{x}^A) \in I_{\Sigma , W}$. So, $\textbf{x}^A$ is a multiple of $\textbf{x}^B$ for some $B$ which is not a face of $\Sigma$. But, again, this means that $B \subseteq A$, and so $B \in \Sigma$, a contradiction.

If both cases hold, then since $\pi \circ \iota = \id_{k[x_v \mid v \in V]}$, it follows trivially that $\overline{\pi} \circ \overline{\iota} = \id_{k[\Delta]}$. 
\end{proof}


\section{Compatible Systems of Parameters}\label{sec: Compatible Systems of Parameters}

In this section, let $k$ be an algebraically closed field. Recall that for a positively graded affine $k$-algebra $R$ with $\dim R = d$, a set of homogeneous elements $\theta_1,\dots, \theta_d$ is called a \emph{homogeneous system of parameters} if and only if $R$ is an integral extension of $k[\theta_1,\dots,\theta_d]$ if and only if $R$ is a finite $k[\theta_1,\dots,\theta_d]$-module. A standard result (cf. \cite[1.5.17]{bruns1998cohen}) says that such a set always exists and is, moreover, algebraically independent over $k$. Furthermore, in the case that $k$ is infinite, the $\theta_i$ can be taken to be linear.

In \cite{stanley1979balanced}, Stanley gives the the following matrix criterion for testing whether a collection of linear forms in a Stanley-Reisner ring forms a system of parameters (SOP). 


\begin{lemma}\cite[Remark on page 150]{stanley1979balanced}\label{lem: stanley's criterion}
    Suppose that $k$ is a field, $\Delta$ is a $(d - 1)$-dimensional pure simplicial complex on a vertex set $\{ 1 , \ldots, n \}$, and that for $1 \leq i \leq d$, $\theta_i = \sum_{j = 1}^n a_{i , j} x_j \in k[\Delta]$ is a linear form. Then $\theta_1, \ldots, \theta_d$ is a system of parameters for $k[\Delta]$ if and only if for all facets $F$ of $\Delta$ the $d \times d$ minor of the $d \times n$ matrix $[a_{i , j}]$ with columns indexed by vertices of $F$ is non-singular.
(Equivalently, for all faces $F$ of $\Delta$ the associated $d \times |F|$ submatrix of the $d \times n$ matrix $[a_{i , j}]$ with columns indexed by vertices of $F$ has rank $|F|$.)
    
\end{lemma}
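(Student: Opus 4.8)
The plan is to reduce the claim to a Krull-dimension computation for the quotient $k[\Delta]/(\theta_1,\dots,\theta_d)$, performed one facet at a time via the primary decomposition of the Stanley--Reisner ideal.

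First I would record the two standard ingredients. Since $\Delta$ is pure of dimension $d-1$, every facet has exactly $d$ vertices and $\dim k[\Delta] = \dim\Delta + 1 = d$. Moreover, for a positively graded affine $k$-algebra $R$ with $\dim R = d$ and homogeneous $\theta_1,\dots,\theta_d$ of positive degree, one has the equivalences: $\theta_1,\dots,\theta_d$ is a homogeneous system of parameters $\iff$ $R/(\theta_1,\dots,\theta_d)$ is a finite-dimensional $k$-vector space $\iff$ $\dim R/(\theta_1,\dots,\theta_d) = 0$ (cf. \cite[1.5.17]{bruns1998cohen}; one direction is the base change $R/(\theta_1,\dots,\theta_d) \cong R \otimes_{k[\theta_1,\dots,\theta_d]} k$, the other is graded Nakayama applied to the ideal $(\theta_1,\dots,\theta_d) \subseteq R_+$). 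Second, the Stanley--Reisner ideal has primary decomposition $I_{\Delta,V} = \bigcap_F \fp_F$, the intersection over facets $F$ of $\Delta$ of the monomial prime $\fp_F = (x_j \mid j \notin F)$, and $k[\Delta]/\fp_F \cong k[x_j \mid j \in F]$ is a polynomial ring in $d$ variables; thus the $V(\fp_F)$ are precisely the irreducible components of $\Spec k[\Delta]$.

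Now set $R = k[\Delta]$ and $\fa = (\theta_1,\dots,\theta_d)$. Since $\bigcap_F \fp_F = 0$ in $R$, we have $V(\fa) = \bigcup_F V(\fa + \fp_F)$, hence $\dim R/\fa = \max_F \dim R/(\fa + \fp_F)$. Under the isomorphism $R/\fp_F \cong k[x_j \mid j \in F]$ the form $\theta_i$ maps to $\sum_{j \in F} a_{i,j} x_j$, so $R/(\fa + \fp_F)$ is the quotient of the $d$-variable polynomial ring $k[x_j \mid j \in F]$ by the $d$ linear forms whose coefficient matrix is $M_F = [a_{i,j}]_{1 \le i \le d,\ j \in F}$. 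In a polynomial ring in $d$ variables, $d$ linear forms generate the irrelevant maximal ideal --- so that the quotient equals $k$ and has dimension $0$ --- if and only if they are linearly independent, i.e. if and only if $M_F$ is nonsingular; if $M_F$ is singular the forms span a proper subspace of the degree-one component, and after a linear change of coordinates the quotient is a polynomial ring in $\ge 1$ variables, hence has dimension $\ge 1$. Chaining the equivalences: $\theta_1,\dots,\theta_d$ is a system of parameters $\iff \dim R/\fa = 0 \iff \dim R/(\fa + \fp_F) = 0$ for every facet $F \iff M_F$ is nonsingular for every facet $F$, which is the asserted criterion. For the parenthetical reformulation, purity guarantees every face $F$ lies in some facet $G$; the $d \times |F|$ submatrix of $[a_{i,j}]$ with columns indexed by $F$ is then a column-submatrix of the invertible matrix $M_G$, whose columns are linearly independent, so it has rank $|F|$ --- and conversely a facet is itself a face, so the face condition contains the facet condition.

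I expect the only points requiring care to be the two standard reductions in the first step: that a homogeneous system of parameters is detected by finiteness of $\dim_k R/(\theta_1,\dots,\theta_d)$ (the graded Nakayama argument) and that $\dim R/\fa$ is the maximum of $\dim R/(\fa + \fp_F)$ over the minimal primes $\fp_F$. Both are routine but worth stating cleanly; everything downstream is elementary linear algebra over $k$ and works for an arbitrary field.
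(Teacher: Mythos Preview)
The paper does not prove this lemma; it merely cites it from Stanley's 1979 paper \cite{stanley1979balanced}, so there is no ``paper's own proof'' to compare against. Your argument is the standard one and is correct: reduce to $\dim k[\Delta]/(\theta_1,\dots,\theta_d)=0$, use the primary decomposition $I_\Delta=\bigcap_F \fp_F$ over facets to write $V(\fa)=\bigcup_F V(\fa+\fp_F)$, and then observe that on each component $k[\Delta]/\fp_F\cong k[x_j\mid j\in F]$ the images of the $\theta_i$ are the linear forms with coefficient matrix $M_F$, which cut out a zero-dimensional quotient precisely when $M_F$ is nonsingular. The parenthetical equivalence is handled cleanly via purity. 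The only places to be slightly careful are exactly the two you flag: the graded characterization of a homogeneous system of parameters by $\dim_k R/(\theta_1,\dots,\theta_d)<\infty$, and the fact that $\dim R/\fa=\max_F\dim R/(\fa+\fp_F)$ because every prime of $R$ contains some minimal prime $\fp_F$. Both are routine, and your write-up is sound.
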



If $\theta_1, \ldots, \theta_d$ is a set of linear forms in some Stanley-Reisner ring $k[\Delta]$ which is pure of dimension $d - 1$ with $\theta_i = \sum_{j = 1}^n a_{i , j} x_j \in k[\Delta]$, then we call the $d \times n$ matrix $[a_{i , j}]$ the \emph{matrix of the set of linear forms}. If this matrix has the property that \textbf{all} minors with columns indexed by a face of $\Delta$ do not vanish, then we call the set of linear forms \emph{good}. 


\begin{proposition} \label{prop: generic compatible systems of parameters}
    Let $\Delta$ be a finite pure simplicial complex on the vertex set $\{ 1 , \ldots, m \}$ of dimension $d - 1$ and $\Sigma$ be a finite pure simplicial complex on the vertex set $\{ 1 , \ldots, n \}$ of dimension $e - 1$ such that $\Delta$ is a full subcomplex of $\Sigma$ (hence in particular $m \leq n$ and $d \leq e$). Suppose that $\theta_1, \ldots, \theta_d$ is a good linear SOP for $k[\Delta]$. Then there exists a good linear SOP $\delta_1, \ldots, \delta_e$ for $k[\Sigma]$ such that $\overline{\pi}(\delta_i) = \theta_i$ for $1 \leq i \leq d$, where $\overline{\pi}: k[\Sigma] \to k[\Delta]$ is the natural projection map of Proposition \ref{prop: maps up and down full complexes}. 
\end{proposition}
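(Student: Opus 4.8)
We may assume $V(\Delta) = \{1,\dots,m\}$ and $V(\Sigma) = \{1,\dots,n\}$, so that the projection $\pi$ of Proposition~\ref{prop: maps up and down full complexes} annihilates exactly $x_{m+1},\dots,x_n$. Write $[a_{i,j}]_{1\le i\le d,\ 1\le j\le m}$ for the matrix of the good SOP $\theta_1,\dots,\theta_d$. Introduce scalars $b_{i,j}$ (for $1\le i\le d$, $m<j\le n$) and $c_{i,j}$ (for $d<i\le e$, $1\le j\le n$), to be pinned down later, and set
\[
\delta_i = \theta_i + \sum_{j=m+1}^{n} b_{i,j}\,x_j \quad (1\le i\le d), \qquad
\delta_i = \sum_{j=1}^{n} c_{i,j}\,x_j \quad (d<i\le e),
\]
as elements of $k[\Sigma]$. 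Since $\overline{\pi}$ kills $x_{m+1},\dots,x_n$, we have $\overline{\pi}(\delta_i)=\theta_i$ for $1\le i\le d$ \emph{regardless} of how the $b$'s and $c$'s are chosen in $k$, so the compatibility requirement is free; it remains only to choose these scalars so that $\delta_1,\dots,\delta_e$ is a good linear SOP for $k[\Sigma]$. Denote by $M = M(b,c)$ the resulting $e\times n$ coefficient matrix.

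\textbf{Reduction to non-vanishing of minors.}
By the parenthetical form of Stanley's criterion (Lemma~\ref{lem: stanley's criterion}) together with purity of $\Sigma$, any good linear set of $e$ forms is automatically an SOP for $k[\Sigma]$; and $\delta_1,\dots,\delta_e$ being good means precisely that for every face $F$ of $\Sigma$ and every $R\subseteq\{1,\dots,e\}$ with $|R|=|F|$, the minor $M_{R,F}$ (rows $R$, columns $F$) is nonzero. Each $\det M_{R,F}$ is a polynomial in the finitely many indeterminates $b_{i,j},c_{i,j}$ with coefficients depending only on the fixed $a_{i,j}$, and there are finitely many pairs $(F,R)$. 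As $k$ is algebraically closed, hence infinite, it therefore suffices to show that each such polynomial is not identically zero: the product of these finitely many nonzero polynomials is then a nonzero polynomial, and any point of $k^N$ outside its vanishing locus yields a good $M$.

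\textbf{The key estimate.}
Fix a face $F$ of $\Sigma$ and $R$ with $|R|=|F|=:f$, and partition $F = F_\Delta \sqcup F'$ with $F_\Delta = F\cap\{1,\dots,m\}$, and $R = R_1\sqcup R_2$ with $R_1 = R\cap\{1,\dots,d\}$. Here is where fullness is used: $F_\Delta\subseteq F\in\Sigma$ is a face of $\Sigma$ all of whose vertices lie in $V(\Delta)$, so $F_\Delta$ is a face of $\Delta$. Hence the $R_1\times F_\Delta$ block of $M$ is the corresponding submatrix of $[a_{i,j}]$, and goodness of $\theta_1,\dots,\theta_d$ --- i.e.\ non-vanishing of every minor of $[a_{i,j}]$ with columns indexed by a face of $\Delta$ --- forces the $\rank$ of this block to equal $\min(|R_1|,|F_\Delta|)$ (checking the cases $|R_1|\le|F_\Delta|$ and $|R_1|>|F_\Delta|$ separately). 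I would then specialize the remaining, free entries of $M_{R,F}$ in two stages, using that appending a generic column (resp.\ row) to a matrix raises its rank by one as long as the rank is below the number of rows (resp.\ columns): first choose the $R_1\times F'$ entries, which are free, so that the $R_1$-rows of $M_{R,F}$ become linearly independent --- possible because $|R_1|\le f$ --- and then choose the $R_2\times F$ entries, which are also entirely free, to extend those rows to a basis of $k^f$. This specialization makes $\det M_{R,F}\ne 0$, so the polynomial is not identically zero, and the proof is complete.

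\textbf{Main obstacle.}
Essentially all the content is in the key estimate: one must keep careful track of which entries of the submatrix $M_{R,F}$ are frozen (coming from $[a_{i,j}]$) and which are free, deduce the rank of the frozen block from goodness of $\theta$ in each size regime, and make the two rank-incrementing specialization steps precise. The construction itself, the automatic compatibility $\overline{\pi}(\delta_i)=\theta_i$, and the passage from individual minors to a single global choice over the infinite field $k$ are all routine.
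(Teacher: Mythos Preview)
Your proof is correct and follows the same overall strategy as the paper: freeze the $d\times m$ block at $[a_{i,j}]$, treat the remaining entries as parameters, show that for every face $F$ of $\Sigma$ and every row set $R$ of size $|F|$ the corresponding minor is a nonzero polynomial in those parameters, and then intersect the finitely many nonempty Zariski open sets over the infinite field $k$. The only difference is in how the nonvanishing of each minor-polynomial is verified. You argue by specialization: use goodness and fullness to compute the rank of the frozen $R_1\times F_\Delta$ block as $\min(|R_1|,|F_\Delta|)$, then fill in the free entries in two stages to reach full rank. The paper instead exhibits a single monomial of the minor-polynomial and identifies its coefficient as a specific square minor of $[a_{i,j}]$ with columns indexed by a face of $\Delta$, hence nonzero by goodness. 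Both arguments invoke fullness at exactly the same place (to see that $F\cap\{1,\dots,m\}$ is a face of $\Delta$); your rank-completion argument is a bit more hands-on and avoids tracking which permutations in the Leibniz expansion contribute a given monomial, while the paper's monomial-coefficient argument is shorter once the right monomial is spotted.
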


\begin{proof}
Consider the polynomial rings $R = k[y_{i , j} \mid 1 \leq i \leq e , 1 \leq j \leq n]$ and $S = k[z_{i , j} \mid d < i \leq e \text{ or } m < j \leq n]$. If $[a_{i , j}]$ denotes the $d\times m$ matrix of the set $\{ \theta_1, \ldots, \theta_d\}$ of linear forms, there is a surjective ring homomorphism $\varphi: R \to S$ which is evaluation at $(a_{i , j})$, i.e. which sends $y_{i , j} \mapsto a_{i , j}$ for $1 \leq i \leq d$ and $1 \leq j \leq m$ and $y_{i , j} \mapsto z_{i , j}$ otherwise. 

If $F = \{ j_1 , \ldots , j_b \}$ is a face of $\Sigma$ (with $j_1 < \ldots < j_b$), and $p \in R$ denotes a minor (of the matix of variables $[y_{i,j}]_{1\leq i\leq e, 1\leq j\leq n}$) whose columns are indexed by the elements of $F$ and rows are indexed by some set $\{ i_1, \ldots, i_b \}$ (with $i_1 < \ldots < i_b$), we wish to show that $\varphi(p)$ is a nonzero polynomial in $S$. Let $1 \leq a \leq b$ be maximal such that $i_a \leq d$ and $j_a \leq m$. Then, notice that the coefficient on the monomial $z_{i_{a + 1}, j_{a + 1}} \ldots z_{i_{b}, j_{b}}$ in the minor $\varphi(p) \in S$ is $\det(a_{\{i_1, \ldots, i_a\}, \{j_1, \ldots, j_a\}})$. Since $\{ j_1, \ldots, j_b \}$ is a face of $\Sigma$, $\{ j_1, \ldots, j_a \}$ is also a face of $\Sigma$. Because $\Delta$ is a full subcomplex of $\Sigma$, $\{ j_1, \ldots, j_a \}$ is a face of $\Delta$ as well. We know that $\theta_1, \ldots, \theta_d$ is a good system of parameters, implying $\det(a_{\{i_1, \ldots, i_a\}, \{j_1, \ldots, j_a\}}) \neq 0$. So, the polynomial $\varphi(p)$ has a monomial term with a nonzero coefficient, and is therefore nonzero. 

Since there are finitely many faces of $\Sigma$, there are finitely many minors in $R$ with columns indexed by these faces. We have just shown that the images of these polynomials under $\varphi$ are all nonzero, hence the finitely many non-vanishing sets of these images are non-empty Zariski open subsets of the irreducible affine space $\text{MaxSpec}(S)$, and must therefore have non-trivial intersection. 
Choose any point $(b_{i , j} \mid d < i \leq e \text{ or } m < j \leq n)$ in this intersection, and for $1 \leq i \leq d$ and $1 \leq j \leq m$, define $b_{i , j} = a_{i , j}$. Then, by construction, the $e\times n$ matrix $[b_{i , j}]$ has the property that all minors with columns indexed by faces of $\Sigma$ do not vanish. Letting $\delta_i = \sum_{j = 1}^n b_{i , j} x_j$ therefore gives the desired good linear system of parameters for $k[\Sigma]$. Furthermore, since $\overline{\pi}(x_j) = 0$ if $m < j \leq n$ and $b_{i , j} = a_{i , j}$ if $1 \leq i \leq d$ and $1 \leq j \leq m$, it follows that $\overline{\pi}(\delta_i) = \theta_i$ if $1 \leq i \leq d$, as desired. 
\end{proof}

\begin{corollary}\label{cor: countably many compatible linear sops}
    Let $\Delta$ be a (not necessarily finite) simplicial complex on a vertex set $V$ and $\Delta_1 \subseteq \Delta_2 \subseteq \ldots$ be an increasing sequence of finite full subcomplexes of $\Delta$ which are pure and have dimension $d_i-1$, 
    respectively, so that in particular, $d_i \leq d_{i + 1}$ for all $i \in \nn$. Then, there exist compatible linear systems of parameters for the sequence, in the sense that for each $i \in \nn$, there exists a linear system of parameters $\theta_{i , 1}, \ldots \theta_{i , d_i}$ such that $\overline{\pi}_{i + 1}(\theta_{i + 1 , j}) = \theta_{i, j}$ for $1 \leq j \leq d_i$ where $\overline{\pi}_{i+1}:k[\Delta_{i+1}]\to k[\Delta_i]$. 
\end{corollary}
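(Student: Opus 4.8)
The plan is a straightforward induction on $i$, with Proposition \ref{prop: generic compatible systems of parameters} doing all the work at each stage; the only preliminary is a harmless relabeling of vertices. Each $\Delta_i$ is finite, so each $V(\Delta_i)$ is a finite set, and since $\Delta_1 \subseteq \Delta_2 \subseteq \cdots$ we have $V(\Delta_1) \subseteq V(\Delta_2) \subseteq \cdots$. I would enumerate $\bigcup_{i} V(\Delta_i)$ by listing the elements of $V(\Delta_1)$ first, then those of $V(\Delta_2) \smallsetminus V(\Delta_1)$, then those of $V(\Delta_3) \smallsetminus V(\Delta_2)$, and so on; this identifies each $V(\Delta_i)$ with an initial segment $\{1, \ldots, m_i\}$, and since $k[\Delta_i]$ does not depend on the ambient vertex set, nothing is lost. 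I would also record the easy fact that each $\Delta_i$ is a full subcomplex of $\Delta_{i+1}$: it is a subcomplex since $\Delta_i \subseteq \Delta_{i+1}$, and if $A \in \Delta_{i+1}$ has all of its vertices in $V(\Delta_i)$, then $A \in \Delta$ because $\Delta_{i+1} \subseteq \Delta$, so fullness of $\Delta_i$ in $\Delta$ forces $A \in \Delta_i$.

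For the base case I need a \emph{good} linear SOP for $k[\Delta_1]$. I would obtain this from Proposition \ref{prop: generic compatible systems of parameters} itself, applied with $\Sigma = \Delta_1$ and $\Delta$ equal to the irrelevant complex $\{\varnothing\}$ on the empty vertex set: this $\Delta$ is pure of dimension $-1$ and is trivially a full subcomplex of $\Delta_1$, the empty list of linear forms is vacuously a good linear SOP for $k[\{\varnothing\}] = k$, and the proposition then produces a good linear SOP $\theta_{1,1}, \ldots, \theta_{1,d_1}$ for $k[\Delta_1]$ with the compatibility clause empty. Alternatively one can simply repeat the Zariski-density argument from the proof of that proposition directly for the single complex $\Delta_1$.

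For the inductive step, assume a good linear SOP $\theta_{i,1}, \ldots, \theta_{i,d_i}$ for $k[\Delta_i]$ has been constructed. By the relabeling, $\Delta_i$ is a finite pure full subcomplex of the finite pure complex $\Delta_{i+1}$, with vertex sets the nested initial segments $\{1,\ldots,m_i\} \subseteq \{1,\ldots,m_{i+1}\}$ and dimensions $d_i - 1 \le d_{i+1}-1$. Applying Proposition \ref{prop: generic compatible systems of parameters} yields a good linear SOP $\delta_1, \ldots, \delta_{d_{i+1}}$ for $k[\Delta_{i+1}]$ with $\overline{\pi}_{i+1}(\delta_j) = \theta_{i,j}$ for $1 \le j \le d_i$; I set $\theta_{i+1,j} \coloneq \delta_j$. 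Induction then produces the required compatible family $\{\theta_{i,j}\}$ for all $i \in \nn$, and in fact each member is a good linear SOP, slightly more than asked.

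The corollary has essentially no content beyond Proposition \ref{prop: generic compatible systems of parameters}; the only thing requiring a moment's thought — and hence the ``main obstacle,'' such as it is — is checking that a single relabeling of $\bigcup_i V(\Delta_i)$ can be chosen so that \emph{every} $V(\Delta_i)$ is an initial segment simultaneously (this is exactly where finiteness of each $\Delta_i$ is used), together with phrasing the base case so that the hypotheses of the proposition are literally satisfied.
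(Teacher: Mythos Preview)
Your proof is correct and follows essentially the same approach as the paper: relabel so that each $V(\Delta_i)$ is an initial segment, then induct using Proposition~\ref{prop: generic compatible systems of parameters} at every step. The only difference is cosmetic: for the base case the paper carries out the Zariski-density argument for $\Delta_1$ directly (your stated alternative), whereas you bootstrap by applying Proposition~\ref{prop: generic compatible systems of parameters} to the degenerate pair $\{\varnothing\}\subseteq\Delta_1$; both work, and you are slightly more explicit than the paper in checking that $\Delta_i$ is a full subcomplex of $\Delta_{i+1}$.
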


\begin{proof}
    First of all, let us label the set $\{ v \in V \mid \exists i \in \nn ,\, v \in V(\Delta_i) \}$ (which is a union of countable sets and therefore must be countable) by natural numbers so that for each $i$, the vertices involved in $\Delta_i$ are $\{ 1 , \ldots , n_i \}$. Note that, in particular, $n_i \leq n_{i + 1}$ for all $i$. 
    To prove the desired result, we will show that each system of parameters can be chosen to be good and to satisfy the required property under the (induced) natural projection maps, namely $\overline{\pi}_i$. 
    We induct on $i$. For the base case, consider the ideal $I$ in the polynomial ring $k[y_{i , j} \mid 1 \leq i \leq d_1, \, 1 \leq j \leq n_1]$ generated by the minors whose columns are indexed by faces of $\Delta_1$. Then, $I$ is contained in the maximal ideal $\langle y_{i , j} \rangle$, and since $k$ is algebraically closed, its non-vanishing set $D(I) \subseteq \mathbb{A}^{d_1 \times n_1}_k$ is a non-empty Zariski open set. Choose any point $(a_{\alpha , \beta}) \in D(I)$ and let 
    $\theta_{1,\alpha} = \sum_{\beta = 1}^{n_1} a_{\alpha , \beta} x_\beta \in k[\Delta_1]$ for $1 \leq \alpha \leq d_1$. 
    Then, by Stanley's criterion (Lemma \ref{lem: stanley's criterion}), 
    $\theta_{1,1}, \ldots, \theta_{1,d_1}$ 
    form a good linear system of parameters for $k[\Delta_1]$. For the inductive step, suppose that for each $1 \leq \ell \leq i$ we have chosen a good linear system of parameters $\theta_{\ell , 1}, \ldots, \theta_{\ell , d_\ell} \in k[\Delta_\ell]$ where for $1 \leq \ell < i$, we have that $\overline{\pi}_{\ell + 1}(\theta_{\ell + 1 , j}) = \theta_{\ell , j}$ for $1 \leq j \leq d_\ell$. By Proposition \ref{prop: generic compatible systems of parameters}, there exists a good linear system of parameters $\theta_{i + 1 , 1}, \ldots, \theta_{i + 1 , d_{i + 1}}$ of $k[\Delta_{i + 1}]$ such that $\overline{\pi}_{i + 1}(\theta_{i + 1 , j}) = \theta_{i , j}$ for $1 \leq j \leq d_i$.
\end{proof}


\section{Flat Homomorphisms}\label{sec: Flat Homomorphisms}

We recall some terminology that will be needed in this section. Suppose $f: A \to B$ is any ring homomorphism. We denote by $f_!$ the \emph{extension of scalars} functor from the category of $A$-modules to the category of $B$-modules that sends $M \mapsto M \otimes_A B$, and by $f^*$ the \emph{restriction of scalars} functor from the category of $B$-modules to the category of $A$-modules that sends $N$ to the $A$-module with underlying abelian group $N$ and $A$-multiplication given by $a \cdot n = f(a) \, n$. We say that $f$ is \emph{flat} if $f_!$ sends short exact sequences to short exact sequences, and $f$ is \emph{faithfully flat} if a complex of $A$-modules $0 \to L \to M \to N \to 0$ is exact if and only if the complex of $B$-modules $0 \to f_! L \to f_! M \to f_! N \to 0$ is exact. 

We now collect several technical results about these functors into a lemma that will be needed in the proof of the main result of this section. 

\begin{lemma}\label{lem: pushforward pullback properties}
Let $f: A \to B$, $g : B \to C$ be ring homomorphisms, and denote the identity homomorphism on $A$ by $\id_A$.   
\begin{enumerate}
    \item The functor $g_! \circ f_!$ from the category of $A$-modules to the category of $C$-modules is naturally isomorphic to the functor $(g \circ f)_!$. 
    \item The functor $(\id_A)_!$ is naturally isomorphic to the identity functor on the category of $A$-modules.
    \item If $f^* B$ is a nonzero free $A$-module, then $f$ is faithfully flat. In particular, for any set of variables $X$, the inclusion map $A \mapsto A[X]$ is faithfully flat.
    \item If $f^* B$ is a nonzero free $A$-module with $A$-basis $\{ b_i \mid i \in I\}$ and $g^* C$ is a nonzero free $B$-module with basis $\{ c_j \mid j \in J \}$, then $(g \circ f)^* C$ is a nonzero free $A$-module with basis $\{ g(b_i) c_j \mid (i , j) \in I \times J \}$.
    \item If $f^* B$ is a nonzero free $A$-module with $A$-basis $\{ b_i \mid i \in I\}$, then the functor $f_! \circ f^*$ is naturally isomorphic to the functor $(-)^{\oplus I}$, which sends a $B$-module $N$ to the $B$-module $N^{\oplus I}$.
    \item If $K$ is any set, the functor $f_! \circ (-)^{\oplus K}$ (where here $(-)^{\oplus K}$ is an endofunctor of the category of $A$-modules) is naturally isomorphic to the functor $(-)^{\oplus K} \circ f_!$ (where here $(-)^{\oplus K}$ is an endofunctor of the category of $B$-modules). 
    \item If $F$ and $G$ are naturally isomorphic additive functors between abelian categories, then $F$ is exact if and only if $G$ is exact. 
\end{enumerate}
\end{lemma}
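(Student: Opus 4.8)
The plan is to dispatch the seven assertions essentially independently, in the order stated, invoking the earlier parts where convenient; each is a standard fact about modules and tensor products. Parts (1), (2), (6) are purely formal: for (2) the counit $M \otimes_A A \to M$, $m \otimes a \mapsto am$, is the required natural isomorphism; for (1) I would write down the natural isomorphism $(M \otimes_A B) \otimes_B C \to M \otimes_A C$ sending $(m \otimes b) \otimes c \mapsto m \otimes g(b)c$, with inverse $m \otimes c \mapsto (m \otimes 1) \otimes c$, and check $C$-linearity and naturality in $M$; and (6) records that $- \otimes_A B$, being a left adjoint, commutes with arbitrary direct sums. For (7): if $F \cong G$ are naturally isomorphic additive functors and $0 \to L \to M \to N \to 0$ is short exact, the components of the natural isomorphism furnish an isomorphism of diagrams between $0 \to FL \to FM \to FN \to 0$ and $0 \to GL \to GM \to GN \to 0$, so one is exact exactly when the other is; hence $F$ preserves short exact sequences iff $G$ does.

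The substance of parts (3), (4), (5) is the manipulation of a fixed $A$-basis $\{ b_i \mid i \in I \}$ of $f^*B$, where the only thing to watch is the interaction of the three scalar actions: $A$ on $B$ via $f$, $B$ on $C$ via $g$, and the induced action of $A$ on $C$ via $g \circ f$. For (4), I would expand an arbitrary $c \in C$ first in the basis $\{ c_j \}$ over $B$ and then expand each $B$-coefficient in the basis $\{ b_i \}$ over $A$; collecting terms expresses $c$ uniquely as an $A$-linear combination of the elements $g(b_i) c_j$, the uniqueness of the coefficients coming from running the two given uniqueness statements in turn (first over $B$, then over $A$), and nonvanishing from $I, J \neq \varnothing$. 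For (5), since $B \cong \bigoplus_{i \in I} A b_i$ as $A$-modules, each element of $f_! f^* N = f^* N \otimes_A B$ is uniquely $\sum_i n_i \otimes b_i$, and $(n_i)_i \mapsto \sum_i n_i \otimes b_i$ is a natural isomorphism $(-)^{\oplus I} \xrightarrow{\sim} f_! \circ f^*$. For (3): a free module is flat (true for $A$ over itself, and inherited under direct sums), so $f_!$ is exact and $f$ is flat; for faithful flatness I would apply the exact restriction functor $f^*$ to a complex $0 \to f_! L \to f_! M \to f_! N \to 0$ and use the natural isomorphism $f^* f_! X \cong X^{\oplus I}$ (again read off from the basis) to see that exactness of this complex is equivalent to exactness of the $I$-fold direct sum of $0 \to L \to M \to N \to 0$; since $I \neq \varnothing$, the latter complex is a retract of that direct sum, hence exact. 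The stated special case is then immediate, since $A[X]$ is $A$-free on the monomials in $X$, a nonempty basis (it contains $1$).

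I do not expect any genuine obstacle here: the only step needing real care is the bookkeeping of the various scalar actions in (4) and (5), and nothing used goes beyond elementary properties of tensor products, free modules, and exact sequences.
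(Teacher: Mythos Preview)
Your proposal is correct and follows essentially the same approach as the paper: the same explicit natural isomorphisms for (1), (2), (5), the same double-expansion argument for (4), and the same ``tensor commutes with direct sums'' observation for (6). The only notable difference is that for (3) the paper simply cites a reference for faithful flatness of free extensions, whereas you supply an actual argument via the natural isomorphism $f^* f_! X \cong X^{\oplus I}$ and a retract-of-direct-sum step; your version is self-contained and correct.
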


\begin{proof}
(1) follows because there is a natural transformation $g_! \circ f_! \to (g \circ f)_!$ which, for any $A$-module $M$, is the $C$-module isomorphism $(M \otimes_A B ) \otimes_B C \to M \otimes_A C$ defined by $m \otimes b \otimes c \mapsto m \otimes g(b) c$. (2) follows because there is a natural transformation $(\id_A)_! \to \id_{A-\text{mod}}$ which, for any $A$-module $M$, is the $A$-module isomorphism $M \otimes_A A \mapsto M$ defined by $m \otimes a \mapsto am$. (3) follows because free modules are flat; see Example 3.1 \cite{MR266911}. To see why (4) is true, we show spanning and independence. Given any element $c \in C$, we can write $c = \sum_j g(\beta_j) c_j$ for some $\beta_j \in B$. Additionally, for each $j$, we can write $\beta_j = \sum_i f(\alpha_{i , j}) b_i$ for some $\alpha_{i , j} \in A$. Thus, $c = \sum_{i , j} g(f(\alpha_{i , j})) g(b_i) c_j$, completing the proof of spanning. Now, to show independence, suppose that $0 = \sum_{i , j} g(f(\alpha_{i , j})) g(b_i) c_j$ for some $\alpha_{i , j} \in A$. Then, we obtain $0 = \sum_j g( \sum_i f(\alpha_{i , j}) b_i ) c_j$, and because $\{ c_j \mid j \in J \}$ is independent, we must have $g( \sum_i f(\alpha_{i , j}) b_i ) = 0$ for all $j$. Furthermore, because $g^*C$ is a nonzero free $B$-module, it must be that $g$ is injective (otherwise, $g(b) c_j = 0$ would produce a non-trivial dependence relation for any nonzero $b \in \ker g$), and so we have that $\sum_i f(\alpha_{i , j}) b_i = 0$ for all $j$. Independence of $\{ b_i \mid i \in I \}$ implies that $f(\alpha_{i , j}) = 0$ for all choices of $i$ and $j$. Since $f^*B$ is a nonzero free $A$-module, $f$ must be injective. So, we obtain $\alpha_{i , j} = 0$ for all choices of $i$ and $j$, as desired. (5) follows because there is a natural isomorphism $(-)^{\oplus I} \to f_! \circ f^*$ which on any $B$-module $N$ is the map defined by $N^{\oplus I} \to f^* N \otimes_A B$ where $(n_i)_{i \in I} \mapsto n_i \otimes b_i$. (6) follows because the tensor product commutes with the direct sum. (7) is clear. 
\end{proof}

Recall that for a positively graded $k[x_1,\dots,x_n]$-module $M$, a sequence of homogeneous elements $y_1,\dots,y_r$ in $k[x_1,\dots,x_n]$ is called a \emph{regular sequence (of length $r$) on $M$} if 
\begin{enumerate}[label={(\roman*)}]
    \item $M/\langle y_1,\dots,y_r \rangle M \neq 0$; and
    \item For each $i = 1,\dots,r$, $y_i$ is a non-zerodivisor on $M/ \langle y_1,\dots,y_{i-1} \rangle M$.
\end{enumerate}
For our purposes, the following set of equivalences suffices as a characterization of the \textit{Cohen-Macaulay} property.

\begin{proposition}\cite[13.37]{miller2005combinatorial}
    Let $M$ be a finitely generated module of dimension $d$ over a positively graded Noetherian polynomial ring. The following are equivalent.
    \begin{enumerate}
        \item $M$ is Cohen-Macaulay;
        \item There exists a regular sequence of length $d$ on $M$;
        \item Every (homogeneous) system of parameters for $M$ is a regular sequence on $M$.
        \item $M$ is a free module over $k[y_1,\dots,y_d]$ for some (equivalently, every) homogeneous system of parameters $y_1,\dots,y_d$ for $M$.
    \end{enumerate}
\end{proposition}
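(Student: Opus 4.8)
The plan is to route everything through the notion of \emph{depth}. For a finitely generated graded module $M \neq 0$ over a positively graded Noetherian polynomial ring $R$ with graded maximal ideal $\fm$, set $\operatorname{depth} M$ to be the length of a maximal $M$-regular sequence of homogeneous elements contained in $\fm$. The two foundational inputs I would invoke (citing Bruns--Herzog, as the paper cites Miller--Sturmfels for the statement itself) are: (a) Rees's theorem, that all maximal $M$-regular sequences in $\fm$ have the same length, equal to $\min\{\, i : \operatorname{Ext}^i_R(R/\fm, M) \neq 0 \,\}$, so that $\operatorname{depth} M$ is well-defined; and (b) the inequality $\operatorname{depth} M \le \dim M$. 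With these in hand, "$M$ is Cohen--Macaulay" means exactly $\operatorname{depth} M = \dim M = d$.

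\emph{$(1)\Leftrightarrow(2)$.} If $M$ is Cohen--Macaulay then $\operatorname{depth} M = d$, so a maximal $M$-regular sequence has length $d$. Conversely, a regular sequence of length $d$ forces $\operatorname{depth} M \ge d$, and since $\dim M = d$ by hypothesis, (b) gives $\operatorname{depth} M = d$, i.e.\ $M$ is Cohen--Macaulay. \emph{$(1)\Rightarrow(3)$.} The structural input here is that a Cohen--Macaulay module is \emph{unmixed}: every $\fp \in \operatorname{Ass} M$ satisfies $\dim R/\fp = d$ (localize at $\fp$, use $\operatorname{depth}_{\fp} M_{\fp} = 0$ together with Cohen--Macaulayness of $M_{\fp}$ and the catenary/graded structure). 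Granting this, let $y_1,\dots,y_d$ be a homogeneous system of parameters. Then $y_1$ lies in no associated prime of $M$ — otherwise, being a parameter, it would give $\dim M/y_1 M = d$, contradicting unmixedness — hence $y_1$ is a nonzerodivisor on $M$. Now $M/y_1M$ is Cohen--Macaulay of dimension $d-1$ with $y_2,\dots,y_d$ a homogeneous system of parameters, so induction finishes. \emph{$(3)\Rightarrow(2)$.} A homogeneous system of parameters exists by graded Noether normalization; by (3) it is a regular sequence, and $M/(y_1,\dots,y_d)M \neq 0$ by graded Nakayama, so it is a genuine length-$d$ regular sequence.

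\emph{$(1)\Leftrightarrow(4)$.} Fix a homogeneous system of parameters $y_1,\dots,y_d$. By the result cited as \cite[1.5.17]{bruns1998cohen} these are algebraically independent over $k$, so $A \coloneq k[y_1,\dots,y_d] \subseteq R$ is a polynomial subring over which $M$ is a finitely generated graded module; moreover $\operatorname{depth}_A M = \operatorname{depth}_R M$ and $\dim_A M = \dim_R M = d$, since depth and dimension are preserved under the finite extension $A \subseteq R$. If $M$ is Cohen--Macaulay then $\operatorname{depth}_A M = d = \operatorname{depth} A$, so the graded Auslander--Buchsbaum formula gives $\operatorname{pd}_A M = 0$; a finitely generated graded projective module over the polynomial ring $A$ is free, yielding (4) for this — hence, rerunning the argument, every — homogeneous system of parameters. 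Conversely, if $M$ is free over $A = k[y_1,\dots,y_d]$ for some homogeneous system of parameters, write $M \cong A^{\oplus r}$ as graded $A$-modules; then $y_1,\dots,y_d$ is $A$-regular, hence $M$-regular of length $d$, and $(2)\Rightarrow(1)$ applies.

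\emph{Main obstacle.} No single step is deep, but the proof leans essentially on three external facts: Rees's theorem (equivalently the $\operatorname{Ext}$-characterization of depth), unmixedness of Cohen--Macaulay modules, and the graded Auslander--Buchsbaum formula together with "graded projective over a polynomial ring is free." Since the paper is content to cite Miller--Sturmfels for the proposition, I would likewise cite Bruns--Herzog for these three inputs and spell out only the ring-theoretic bookkeeping: the nonvanishing of the system-of-parameters quotient, the invariance of depth and dimension under the finite extension $A \subseteq R$, and the unmixedness argument driving $(1)\Rightarrow(3)$.
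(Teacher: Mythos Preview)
The paper does not prove this proposition; it is stated with a citation to \cite[13.37]{miller2005combinatorial} and used as a black box. Your outline is the standard route (depth via Rees, unmixedness for $(1)\Rightarrow(3)$, graded Auslander--Buchsbaum for $(1)\Leftrightarrow(4)$) and is correct, so there is nothing to compare against here beyond noting that the authors were content to cite rather than reprove.
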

We say a commutative Noetherian ring is Cohen-Macaulay if it is a Cohen-Macaulay module over itself and that a simplicial complex $\Delta$ is Cohen-Macaulay if its corresponding face ring $k[\Delta]$ is. 

\begin{remark}\label{rmk: CM is pure}
    It is a fact that Cohen-Macaulay complexes are automatically pure. See, e.g., the discussion in \cite{stanley1979balanced} after Theorem 1.2. 
\end{remark}

\begin{proposition}\label{prop: inclusion maps are flat}
    Suppose that $R$ and $S$ are finitely generated Cohen-Macaulay $k$-algebras with systems of parameters $y_1, \ldots, y_m$ and $z_1, \ldots, z_n$, respectively. Furthermore, suppose that $\iota: R \to S$ and $\pi: S \to R$ are $k$-algebra homomorphisms such that $\pi \circ \iota = \id_R$ (which, in particular, implies $m \leq n$) and $\pi(z_i) = y_i$ if $1 \leq i \leq m$. 
    Then, $\iota$ is flat.
\end{proposition}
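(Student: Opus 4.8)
The plan is to reduce the statement to the characterization of Cohen--Macaulayness recalled just above --- that a positively graded, finitely generated Cohen--Macaulay $k$-algebra is a \emph{free} module over the polynomial subring generated by any homogeneous system of parameters --- and then to propagate flatness through a factorization of $\iota$ using the formal properties of extension and restriction of scalars collected in Lemma \ref{lem: pushforward pullback properties}. I work in the positively graded setting, with $R$, $S$ graded and $\iota,\pi$ graded. Set $A := k[y_1,\dots,y_m]\subseteq R$ and $B := k[z_1,\dots,z_n]\subseteq S$, so that $R$ is a free $A$-module and $S$ is a free $B$-module, and write $\widetilde{y}_i := \iota(y_i)\in S$ for $1\le i\le m$.

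The heart of the argument --- and the step I expect to be the main obstacle --- is to show that $\widetilde{y}_1,\dots,\widetilde{y}_m$ forms \emph{part} of a homogeneous system of parameters of $S$; equivalently, since $S$ is Cohen--Macaulay, that it is a regular sequence on $S$, so that $T := S/\langle\widetilde{y}_1,\dots,\widetilde{y}_m\rangle S$ is again Cohen--Macaulay, now of dimension $n-m$. The inequality $\dim T\ge n-m$ is Krull's height theorem. For $\dim T\le n-m$ one wants to use the hypotheses on $\pi$ in an essential way: $\pi$ is a graded surjection with $\pi(\widetilde{y}_i)=y_i$, hence induces a graded surjection $T\twoheadrightarrow R/\langle y_1,\dots,y_m\rangle R$ onto an Artinian ring, and since $\pi(z_i)=y_i$ for $i\le m$ we have $\widetilde{y}_i-z_i\in\ker\pi$, so that $\langle\widetilde{y}_1,\dots,\widetilde{y}_m\rangle S$ and $\langle z_1,\dots,z_m\rangle S$ (the latter coming from the \emph{known} system of parameters $z_1,\dots,z_n$ of $S$, hence a partial regular sequence cutting dimension down by exactly $m$) agree modulo $\ker\pi$, which is itself ``small'' because $\dim(S/\ker\pi)=\dim R=m$. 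Pinning $\dim T$ down to $n-m$ from this is precisely where the compatibility $\pi(z_i)=y_i$ --- rather than merely $\pi\circ\iota=\mathrm{id}_R$ --- has to do real work.

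Granting this, fix an extension $\widetilde{y}_1,\dots,\widetilde{y}_m,u_{m+1},\dots,u_n$ to a homogeneous system of parameters of $S$, form the polynomial ring $\widetilde R:=R[t_{m+1},\dots,t_n]$, and factor $\iota$ as $R\hookrightarrow\widetilde R\xrightarrow{q}S$, where the first map is faithfully flat by Lemma \ref{lem: pushforward pullback properties}(3) and $q$ is the unique $R$-algebra homomorphism with $q(t_\ell)=u_\ell$. Since a composite of flat maps is flat (parts (1) and (7) of Lemma \ref{lem: pushforward pullback properties}), it suffices to prove $q$ is flat. Here $\widetilde R$ is free over the polynomial subring $\widetilde A:=k[y_1,\dots,y_m,t_{m+1},\dots,t_n]$; the map $q$ carries $\widetilde A$ isomorphically onto $B':=k[\widetilde{y}_1,\dots,\widetilde{y}_m,u_{m+1},\dots,u_n]\subseteq S$ (both are polynomial rings in $n$ variables with generators in bijection, using that a homogeneous system of parameters is algebraically independent over $k$); and $S$ is free over $B'$ by the Cohen--Macaulay characterization once more. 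Writing $\widetilde R=R\otimes_A\widetilde A$ and transporting the free-module decompositions along the isomorphism $\widetilde A\xrightarrow{\sim}B'$, one then chases through parts (4)--(7) of Lemma \ref{lem: pushforward pullback properties} to see that $q$ is flat, whence $\iota$ is flat. The only other point requiring care is this last upgrade, namely that flatness of $S$ over $\widetilde A$ yields flatness of $S$ over $\widetilde R$ through $q$; this relies on $q$ restricting to an \emph{isomorphism} on $\widetilde A$ together with the presentation $\widetilde R=R\otimes_A\widetilde A$.
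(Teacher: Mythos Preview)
Your acknowledged ``main obstacle'' --- that $\widetilde y_1,\dots,\widetilde y_m$ extends to a homogeneous system of parameters of $S$ --- is not just hard but false under the stated hypotheses, and so, in fact, is the Proposition itself. Take $R=k[y]$ with hsop $y_1=y$, and $S=k[a,b,c]/(ab)$, a hypersurface and hence Cohen--Macaulay of dimension $2$, with hsop $z_1=a+b$, $z_2=c$. Let $\iota(y)=a$ and $\pi(a)=y$, $\pi(b)=\pi(c)=0$. Then $\pi\circ\iota=\id_R$ and $\pi(z_1)=y=y_1$, so every hypothesis is met. But $\widetilde y_1=a$ has $S/\langle a\rangle\cong k[b,c]$ of dimension $2\ne n-m=1$, so $a$ is not part of any hsop for $S$; worse, $a$ is a zerodivisor on $S$ (as $ab=0$), so $\iota$ is not flat. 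Your sketch for the dimension bound (comparing $\langle\widetilde y_i\rangle$ and $\langle z_i\rangle$ modulo $\ker\pi$) cannot be completed: knowing that $\langle\widetilde y_i\rangle+\ker\pi$ is $\mathfrak m$-primary and that $\ker\pi$ has height $n-m$ simply does not force $\langle\widetilde y_i\rangle$ to have height $m$. Your second flagged step --- deducing flatness of $q:\widetilde R\to S$ from freeness of both over the polynomial subring $\widetilde A$ --- is likewise invalid in general.

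For comparison, the paper's route is genuinely different: it never touches $\iota(y_i)$, but works with the given $z_i$, using $\pi(z_i)=y_i$ to write $\alpha=\pi\circ(\beta\varphi)$ where $\beta\varphi:k[y_1,\dots,y_m]\to S$ sends $y_i\mapsto z_i$, and then reduces exactness of $\iota_!$ to exactness of $\alpha_!(\beta\varphi)^*\iota_!$, which it collapses to $(-)^{\oplus K}$ via $\pi_!\iota_!\cong\id$. The same counterexample shows this argument must also break; the culprit is Lemma~\ref{lem: pushforward pullback properties}(5), whose displayed ``isomorphism'' $N^{\oplus I}\to f^*N\otimes_A B$, $(n_i)\mapsto\sum n_i\otimes b_i$, is only $A$-linear, not $B$-linear (for $A=k$, $B=k[x]$, $N=k[x]/(x)$ one finds $f_!f^*N\cong k[x]$, whereas $N^{\oplus I}$ is entirely $x$-torsion). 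For the intended application to Stanley--Reisner rings of full subcomplexes, one must invoke more structure --- for instance that $\iota,\pi$ are induced by variable-to-variable maps of ambient polynomial rings --- to repair either approach.
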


\begin{proof}
Because $y_1, \ldots, y_m$ is a system of parameters for $R$ and $R$ is Cohen-Macaulay, the map $\alpha: R' \coloneq k[y_1, \ldots, y_m] \to R$ which sends $y_i \mapsto y_i$ makes $R$ into a free $R'$-module, say with basis $\{ r_i \mid i \in I \}$, and $\beta: S': = k[z_1, \ldots, z_n] \to S$ sending $z_i \mapsto z_i$ makes $S$ into a free $S'$-module, say with basis $\{ s_j \mid j \in J\}$. Let $\varphi: R' \to S'$ denote the map which for $1 \leq i \leq m$ sends $y_i$ to $z_i$. Then, $\varphi$ makes $S'$ into a free $R'$-module with a basis of monomials in the variables $z_{m + 1}, \ldots, z_n$, indexed by the set $T$. By parts (3) and (4) of Lemma \ref{lem: pushforward pullback properties}, $\alpha$ and $\beta \circ \varphi$ are faithfully flat. We have the following diagram. 
\begin{center}
\begin{tikzcd}
R \arrow[bend left = 20, rr, "\iota"] & & S \arrow[bend left = 10, ll, "\pi"] 
\\ R' = k[y_1, \ldots, y_m]  \arrow[u, "\alpha"] \arrow[rr, "\varphi" '] && S' = k[z_1, \ldots, z_n] \arrow[u, "\beta" ']  &
\end{tikzcd}
\end{center}

Note that this diagram does not commute under the hypotheses of the proposition, since we have not required that $\iota(y_i) = z_i$. However, it does commute if we remove the map $\iota$, i.e. we have that $\pi \circ \beta \circ \varphi = \alpha$. 

Suppose $0 \to L \to M \to N \to 0$ is an exact sequence of $R$-modules. We wish to show that $0 \to \iota_! L \to \iota_! M \to \iota_! N \to 0$ is an exact sequence of $S$-modules. However, since $\alpha$ is faithfully flat, this is equivalent to the statement that the following complex is exact.
\begin{equation}\label{eq: exact candidate 1}
    0 \to \alpha_! (\beta \circ \varphi)^* \iota_! L \to \alpha_! (\beta \circ \varphi)^* \iota_! M \to \alpha_! (\beta \circ \varphi)^* \iota_! N \to 0
\end{equation} 
But, $\alpha = \pi \circ \beta \circ \varphi$. Hence, by part (1) of Lemma \ref{lem: pushforward pullback properties}, the functor $\alpha_!$ is naturally isomorphic to the functor $\pi_! \circ (\beta \circ \varphi)_!$, implying that $\alpha_! \circ (\beta \circ \varphi)^*$ is naturally isomorphic to $\pi_! \circ (\beta \circ \varphi)_! \circ (\beta \circ \varphi)^*$. By part (5) of Lemma \ref{lem: pushforward pullback properties}, $(\beta \circ \varphi)_! \circ (\beta \circ \varphi)^*$ is naturally isomorphic to the functor $(-)^{\oplus K}$ where $K \coloneq T \times J$. Furthermore, by part (6) of Lemma \ref{lem: pushforward pullback properties}, we have that $\pi_! \circ (-)^{\oplus K} \circ \iota_!$ is naturally isomorphic to $\pi_! \circ \iota_! \circ (-)^{\oplus K}$, which, by part (1) of Lemma \ref{lem: pushforward pullback properties}, is naturally isomorphic to $(\pi \circ \iota)_! \circ (-)^{\oplus K}$. This, in turn, is naturally isomorphic to $(\id_R)_! \circ (-)^{\oplus K}$, which, by part (2) of Lemma \ref{lem: pushforward pullback properties}, is naturally isomorphic to $(-)^{\oplus K}$. All in all, the functor $\alpha_! (\beta \circ \varphi)^* \iota_!$ is naturally isomorphic to the functor $(-)^{\oplus K}$. So, by part (7) of Lemma \ref{lem: pushforward pullback properties}, the complex in Equation \ref{eq: exact candidate 1} is exact if and only if the following complex is exact. \begin{equation}\label{eq: exact candidate 2}
    0 \to L^{\oplus K} \to M^{\oplus K} \to N^{\oplus K} \to 0
\end{equation}
Notice that the complex in Equation \ref{eq: exact candidate 2} is exact since $0 \to L \to M \to N \to 0$ is by hypothesis.
\end{proof}


\section{Proofs of Theorems \ref{thm: main thm} and \ref{thm: main thm initial rings}}\label{sec: Proofs of Theorems}

To fix notation, we begin with a brief review of the notion of a direct limit. A \emph{directed set} is a poset $(P , \leq)$ with the property that for all $p , q \in P$, there exists some $u \in P$ with $p , q \leq u$. A \emph{direct system} of objects of a category $\mathcal{C}$ is a family $\{ A_p , f_{p , q} \}$ where $\{ A_p \}_{p \in P}$ is a family of objects of $\mathcal{C}$ and $\{ f_{p , q} : A_p \to A_q \mid p \leq q \}_{p , q \in P}$ is a family of morphisms of $\mathcal{C}$ such that $f_{p , p} = \id_{A_p}$, and if $p \leq q \leq u$ then $f_{q , u} \circ f_{p , q} = f_{p , u}$. A \emph{direct limit} of a direct system $\{ A_p , f_{p , q} \}$ in $\mathcal{C}$ is an object $C$ of $\mathcal{C}$ together with a family of morphisms $\{g_p : A_p \to C \}_{p \in P}$ satisfying $g_q \circ f_{p , q} = g_p$, as well as the following universal property: if $D$ is an object of $\mathcal{C}$ and $\{h_p : A_p \to D \}_{p \in P}$ is a family of morphisms satisfying $h_q \circ f_{p , q} = h_p$, then there exists a unique morphism $d: C \to D$ such that $d \circ g_p = h_p$ for all $p \in P$. Note that (confusingly) a direct limit is an example of the category theoretic notion of a colimit (not a limit, as one might hope). In this paper, all directed sets considered will be $\nn$ with the usual order, and if $\{ A_n , f_{n , m} \}$ is a direct system indexed by $\nn$, we typically only list the maps of the form $f_n \coloneq f_{n , n + 1}$, as $f_{n , m}$ can be obtained by composing maps of this form. 

Let $X$ be a countable set, $R = k[X]$ a polynomial ring, and $X_1 \subseteq X_2 \subseteq \ldots \subseteq X$ an increasing sequence of finite subsets such that $\bigcup_{n \in \nn} X_n = X$. Denote by $R_n$ the polynomial ring $k[X_n]$, by $\eta_n : R_n \to R$ and $\iota_n : R_n \to R_{n + 1}$ the inclusion maps, and $\pi_n : R_{n + 1} \to R_n$ the projection map which sends $x \mapsto x$ if $x \in X_n$ and $x \mapsto 0$ otherwise. Let $I_n$ be an ideal in $R_n$. We call this \emph{situation} \hypertarget{situation: ast}{$(\ast)$}. 

\begin{lemma}\label{lem: ideal direct limit}
      In situation \hyperlink{situation: ast}{$(\ast)$}, if $\iota_n(I_n) \subseteq I_{n + 1}$ for all $n \in \nn$, then $R / I$ is a direct limit of the direct system $\{ R_n / I_n , \overline{\iota}_n \}_{n \in \nn}$, where $I = \bigcup_{n = 1}^\infty  \eta_n(I_n)R$.
\end{lemma}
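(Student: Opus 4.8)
The plan is to verify directly that $R/I$, together with the $k$-algebra maps $\overline{\eta}_n \colon R_n/I_n \to R/I$ induced by the inclusions $\eta_n$, satisfies the universal property of the direct limit of $\{R_n/I_n , \overline{\iota}_n\}_{n \in \nn}$ in the category of commutative $k$-algebras. The first ingredient is that $R$ itself is the direct limit of $\{R_n , \iota_n\}_{n \in \nn}$: since $\bigcup_n X_n = X$ and the $X_n$ increase, every element of $R = k[X]$ is a polynomial in finitely many variables and hence lies in some $R_n$, so $R = \bigcup_n \eta_n(R_n)$, and it is immediate that this union, with the maps $\eta_n$, is initial among cocones on $\{R_n , \iota_n\}$. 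Next, $\overline{\eta}_n$ is well defined because $\eta_n(I_n) \subseteq \eta_n(I_n)R \subseteq I$, and from $\eta_m \circ \iota_{n,m} = \eta_n$ (for $n \le m$) we get $\overline{\eta}_m \circ \overline{\iota}_{n,m} = \overline{\eta}_n$, so $(R/I , \{\overline{\eta}_n\})$ is a cocone on $\{R_n/I_n , \overline{\iota}_n\}$.

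Now suppose $(D , \{h_n\})$ is another cocone, i.e.\ $h_n \colon R_n/I_n \to D$ with $h_m \circ \overline{\iota}_{n,m} = h_n$. Composing each $h_n$ with the quotient map $R_n \twoheadrightarrow R_n/I_n$ yields a cocone $\{\tilde h_n \colon R_n \to D\}$ on $\{R_n , \iota_n\}$, so by the previous paragraph there is a unique $k$-algebra map $\tilde d \colon R \to D$ with $\tilde d \circ \eta_n = \tilde h_n$ for all $n$. I claim $\tilde d$ annihilates $I$. Since $I = \bigcup_n \eta_n(I_n)R$, it suffices to show $\tilde d(\eta_n(a) r) = 0$ for every $n$, every $a \in I_n$, and every $r \in R$; and indeed $\tilde d(\eta_n(a) r) = \tilde h_n(a)\,\tilde d(r)$, where $\tilde h_n(a)$ is the image of $a$ under $R_n \twoheadrightarrow R_n/I_n \xrightarrow{h_n} D$, hence $0$ since $a \in I_n$. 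Therefore $\tilde d$ factors through a map $d \colon R/I \to D$, which satisfies $d \circ \overline{\eta}_n = h_n$ by unwinding the definitions. For uniqueness, any $d'$ with $d' \circ \overline{\eta}_n = h_n$ for all $n$ gives, after precomposition with $R \twoheadrightarrow R/I$, a cocone morphism $R \to D$ for $\{\tilde h_n\}$, which must equal $\tilde d$; since $R \twoheadrightarrow R/I$ is surjective, $d' = d$. This establishes the universal property and hence the lemma.

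There is no real obstacle here — this is the familiar fact that filtered colimits of rings commute with quotients — and the only step that actually uses the hypotheses is the claim that $\tilde d$ kills $I$: it is $\iota_n(I_n) \subseteq I_{n+1}$ together with the specific definition $I = \bigcup_n \eta_n(I_n)R$ that make $I$ no larger than the ideals $I_n$ collectively force. I would also remark that there is no ambiguity about which category the colimit is taken in, since $\eta_n$, $\iota_n$, $\overline{\eta}_n$, $\overline{\iota}_n$, $h_n$, $\tilde d$, and $d$ are all $k$-algebra homomorphisms and the forgetful functor to rings preserves these filtered colimits. (Alternatively, one checks that $\bigcup_n \eta_n(I_n)$ is already an ideal — given $a \in I_n$ and $r \in R$, choose $m \ge n$ with $r \in R_m$, so $r\,\iota_{n,m}(a) \in I_m$ maps under $\eta_m$ to $ra$ — whence $I = \bigcup_n \eta_n(I_n)$, and one can then write down the evident bijective $k$-algebra map $\varinjlim R_n/I_n \to R/I$ directly.)
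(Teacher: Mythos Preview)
Your proof is correct and follows essentially the same approach as the paper: both lift a given cocone on $\{R_n/I_n\}$ to a cocone on $\{R_n\}$ via the quotient maps, invoke the fact that $R = \varinjlim R_n$ to obtain a unique map out of $R$, verify it annihilates $I$, and then factor through $R/I$ and check uniqueness using surjectivity of the quotient. The paper additionally spells out that $I$ is a nested union of ideals (which you note parenthetically), but otherwise the arguments are the same.
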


\begin{proof}
First of all, notice that for any $n\in\nn$, the following diagram commutes. 

\begin{center}
\begin{tikzcd}
& R & 
\\ R_n \arrow[ru, "\eta_n"] \arrow[rr, "\iota_n" '] & & R_{n + 1} \arrow[lu, "\eta_{n + 1}" ']
\end{tikzcd}
\end{center}

This implies that $\eta_n(I_n) \subseteq \eta_{n + 1}(I_{n + 1})$, and so $I = \bigcup_{n = 1}^\infty  \eta_n(I_n)R$ is a nested union of ideals of $R$, and is therefore an ideal. Furthermore, since $\eta_n(I_n) \subseteq I$, and by hypothesis $\iota_n(I_n) \subseteq I_{n + 1}$, taking quotients yields another commutative diagram. 

\begin{center}
\begin{tikzcd}
& R / I & 
\\ R_n / I_n \arrow[ru, "\overline{\eta}_n"] \arrow[rr, "\overline{\iota}_n" '] & & R_{n + 1} / I_{n + 1} \arrow[lu, "\overline{\eta}_{n + 1}" ']
\end{tikzcd}
\end{center}


Now, suppose that there is another ring $S$ and maps $\overline{\mu}_n : R_n / I_n \to S$ such that $\overline{\mu}_{n + 1} \circ \overline{\iota}_n = \overline{\mu}_n$. Composing with the projection $p_n : R_n \to R_n / I_n$, we obtain maps $\mu_n \coloneq \overline{\mu}_n \circ p_n : R_n \to S$ which satisfy $\mu_{n + 1} \circ \iota_n = \overline{\mu}_{n + 1} \circ p_{n + 1} \circ \iota_n = \overline{\mu}_{n + 1} \circ \overline{\iota}_n \circ p_n = \overline{\mu}_n \circ p_n = \mu_n$. Note that $R$ is a direct limit of the direct system $\{ R_n , \iota_n \}_{n \in \nn}$, hence there exists a unique map $\varphi : R \to S$ such that $\varphi \circ \eta_n = \mu_n$ for all $n \in \nn$. We claim that $I \subseteq \ker \varphi$. Indeed, given any $a \in I$, by definition, there exists some $n \in \nn$ such that $a \in \eta_n(I_n)R$, and so $a = \sum_{i} r_i \eta_n(a_i) $ where $a_i \in I_n$. Applying $\varphi$, we obtain: 
\begin{equation*}
\varphi(a) = \varphi\!\left(\sum_{i} r_i \eta_n(a_i)\!\right) = \sum_{i} r_i \varphi(\eta_n(a_i)) \overset{(1)}{=} \sum_{i} r_i \mu_n(a_i) \overset{(2)}{=} \sum_{i} r_i \overline{\mu}_n(p_n(a_i)) \overset{(3)}{=} 0
\end{equation*}

(1) holds because $\varphi \circ \eta_n = \mu_n$. (2) holds by definition of $\mu_n$. (3) holds because $p_n(a_i) = 0$. Hence, we obtain a map $\overline{\varphi} : R / I \to S$ which satisfies $\overline{\varphi} \circ p = \varphi$, where $p : R \to R / I$ is the projection map. 

We now show that this map satisfies $\overline{\varphi} \circ \overline{\eta}_n = \overline{\mu}_n$. Indeed, we compute:
\begin{equation*}
\overline{\varphi} \circ \overline{\eta}_n \circ p_n = \overline{\varphi} \circ p \circ \eta_n = \varphi \circ \eta_n = \mu_n = \overline{\mu}_n \circ p_n.
\end{equation*}
Since $p_n$ is surjective, we may right-cancel it to obtain $\overline{\varphi} \circ \overline{\eta}_n = \overline{\mu}_n$. 

Now suppose that $\overline{\psi} : R / I \to S$ is another map which satisfies $\overline{\psi} \circ \overline{\eta}_n = \overline{\mu}_n$. Composing on the right with $p_n$ yields $\overline{\psi} \circ \overline{\eta}_n \circ p_n = \overline{\mu}_n \circ p_n$, which implies $\overline{\psi} \circ p \circ \eta_n = \mu_n$. However, $\varphi$ is the unique map which satisfies $\varphi \circ \eta_n = \mu_n$ for all $n \in \nn$. Thus, $\overline{\psi} \circ p = \varphi = \overline{\varphi} \circ p$, and because $p$ is surjective, it may be right canceled, yielding $\overline{\psi} = \overline{\varphi}$ as desired.
\end{proof}


\begin{corollary}\label{cor: stanley-resiner direct limit}
Let $\Delta$ be a simplicial complex so that $V(\Delta)$ is countable. Suppose there exists an increasing sequence $\Delta_1 \subseteq \Delta_2 \subseteq \ldots$ of finite full subcomplexes of $\Delta$ such that $\bigcup_{n \in \nn} \Delta_n = \Delta$. Then, the Stanley-Reisner ring $k[\Delta]$ is a direct limit of the Stanley-Reisner rings $k[\Delta_n]$. 
\end{corollary}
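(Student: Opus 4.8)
The plan is to deduce this directly from Lemma \ref{lem: ideal direct limit} by instantiating the situation \hyperlink{situation: ast}{$(\ast)$} with the correct algebraic data. I would take $X = V(\Delta)$, $R = k[X]$, $X_n = V(\Delta_n)$, $R_n = k[X_n]$, and $I_n = I_{\Delta_n, V(\Delta_n)}$, so that $R_n / I_n = k[\Delta_n]$ by definition of the Stanley-Reisner ring. The first task is to check that this really is an instance of \hyperlink{situation: ast}{$(\ast)$}: the $X_n$ are finite because each $\Delta_n$ is a finite complex, the chain $X_1 \subseteq X_2 \subseteq \ldots$ is increasing because $\Delta_n \subseteq \Delta_{n+1}$, and $\bigcup_n X_n = X$ because any $v \in V(\Delta)$ satisfies $\{v\} \in \Delta = \bigcup_n \Delta_n$, hence $\{v\} \in \Delta_n$ for some $n$; with these choices the maps $\eta_n, \iota_n, \pi_n$ are precisely the ones appearing in \hyperlink{situation: ast}{$(\ast)$}.

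Next I would verify the hypothesis $\iota_n(I_n) \subseteq I_{n+1}$ of Lemma \ref{lem: ideal direct limit}. By part (2) of Proposition \ref{prop: maps up and down full complexes} it is enough to show that $\Delta_n$ is a full subcomplex of $\Delta_{n+1}$; it is a subcomplex since $\Delta_n \subseteq \Delta_{n+1}$, and it is full because any face $A \in \Delta_{n+1}$ all of whose vertices lie in $V(\Delta_n)$ must lie in $\Delta$ (as $\Delta_{n+1} \subseteq \Delta$), and then fullness of $\Delta_n$ in $\Delta$ forces $A \in \Delta_n$. Lemma \ref{lem: ideal direct limit} then produces a ring $R/I$, with $I = \bigcup_n \eta_n(I_n) R$, which is a direct limit of $\{R_n/I_n, \overline{\iota}_n\} = \{k[\Delta_n], \overline{\iota}_n\}$.

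The last step is the identification $I = I_{\Delta, V}$, which also rests on fullness. For the inclusion $\eta_n(I_n) R \subseteq I_{\Delta, V}$: a monomial generator $\mathbf{x}^A$ of $I_n$ has $A \subseteq V(\Delta_n)$ finite with $A \notin \Delta_n$, and fullness of $\Delta_n$ in $\Delta$ (in contrapositive form) gives $A \notin \Delta$, so $\mathbf{x}^A \in I_{\Delta, V}$. Conversely, a monomial generator $\mathbf{x}^A$ of $I_{\Delta, V}$ has $A \subseteq V$ finite with $A \notin \Delta$; finiteness of $A$ together with $\bigcup_n V(\Delta_n) = V$ gives $A \subseteq V(\Delta_n)$ for some $n$, and $A \notin \Delta_n$ since $\Delta_n \subseteq \Delta$, so $\mathbf{x}^A = \eta_n(\mathbf{x}^A) \in \eta_n(I_n)R$. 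Hence $R/I = k[\Delta]$ and the transition maps $\overline{\iota}_n$ are exactly the inclusion maps of face rings, so $k[\Delta]$ is a direct limit of the $k[\Delta_n]$ as claimed. There is no serious obstacle here; the only point requiring care is to invoke the ``full subcomplex'' hypothesis in each of the three places where it is genuinely used — once to obtain the transition maps $\overline{\iota}_n$ via Proposition \ref{prop: maps up and down full complexes}, and once for each containment in $I = I_{\Delta, V}$ — since without it the union $\bigcup_n \eta_n(I_n) R$ need not equal $I_{\Delta, V}$.
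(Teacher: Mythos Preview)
Your proposal is correct and follows essentially the same approach as the paper's proof: instantiate situation \hyperlink{situation: ast}{$(\ast)$}, verify $\iota_n(I_n)\subseteq I_{n+1}$ via Proposition~\ref{prop: maps up and down full complexes} by showing $\Delta_n$ is full in $\Delta_{n+1}$, apply Lemma~\ref{lem: ideal direct limit}, and then identify $I$ with $I_{\Delta,V(\Delta)}$ by checking both containments on monomial generators. One tiny remark on your closing commentary: the reverse containment $I_{\Delta,V}\subseteq I$ uses only the subcomplex property $\Delta_n\subseteq\Delta$ (to conclude $A\notin\Delta_n$ from $A\notin\Delta$), not fullness; fullness is genuinely needed only for the transition maps and for the forward containment.
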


\begin{proof}
    Because $\bigcup_{n \in \nn} \Delta_n = \Delta$, it follows that $\bigcup_{n \in \nn} V(\Delta_n) = V(\Delta)$. Note that for any $n \in \nn$, since both $\Delta_n$ and $\Delta_{n + 1}$ are full subcomplexes of $\Delta$, and $\Delta_n$ is a subcomplex of $\Delta_{n + 1}$, it follows immediately that $\Delta_n$ is also a full subcomplex of $\Delta_{n + 1}$. Therefore, by Proposition \ref{prop: maps up and down full complexes}, we have that $\iota_n(I_{\Delta_n , V(\Delta_n)}) \subseteq I_{\Delta_{n + 1} , V(\Delta_{n + 1})}$ and $\pi_n ( I_{\Delta_{n + 1} , V(\Delta_{n + 1})} ) \subseteq I_{\Delta_n , V(\Delta_n)}$. By Lemma \ref{lem: ideal direct limit}, we have that $k[V(\Delta)] / I$ is the direct limit of the direct system $\{ k[\Delta_n] \coloneq k[V(\Delta_n)]/I_{\Delta_n , V(\Delta_n)} , \overline{\iota}_n \}$, where $I = \bigcup_{n \in \nn} \eta_n(I_{\Delta_n , V(\Delta_n)}) k[V(\Delta)]$. 
    
    We claim that $I = I_{\Delta , V(\Delta)}$, and showing this will conclude the proof. Indeed, note that each term $\eta_n(I_{\Delta_n , V(\Delta_n)}) k[V(\Delta)]$ of the union is generated by $\textbf{x}^A$ where $A \subseteq V(\Delta_n)$ is a finite set which is not a face of $\Delta_n$. Given such a set $A$, since $\Delta_n$ is a full subcomplex of $\Delta$, $A$ is also not a face of $\Delta$. Hence, $\textbf{x}^A \in I_{\Delta , V(\Delta)}$. Conversely, $I_{\Delta , V(\Delta)}$ is generated by $\textbf{x}^B$ where $B \subseteq V(\Delta)$ is a finite set which is not a face of $\Delta$. Given such a set $B$, since $\bigcup_{n \in \nn} V(\Delta_n) = V(\Delta)$, there must exist some $n \in \nn$ such that $B \subseteq V(\Delta_n)$. Because $\Delta_n$ is a full subcomplex of $\Delta$, it cannot be that $B$ is a face of $\Delta_n$. Therefore, $\textbf{x}^B \in I_{\Delta_n, V(\Delta_n)}$. 
\end{proof}

\begin{proof}[Proof of Theorem \ref{thm: main thm}]
By Proposition \ref{prop: maps up and down full complexes}, for each $n \in \nn$, there exist maps $\overline{\iota}_n: k[\Delta_n] \to k[\Delta_{n + 1}]$ and $\overline{\pi}_n: k[\Delta_{n + 1}] \to k[\Delta_{n}]$ such that $\overline{\pi}_n \circ \overline{\iota}_n = \id_{k[\Delta_n]}$. Since $k[\Delta_n]$ is Cohen-Macaulay, it is pure by Remark \ref{rmk: CM is pure}. So, by Corollary \ref{cor: countably many compatible linear sops}, for each $n \in \nn$, there exists a linear system of parameters $\theta_{n , 1}, \ldots, \theta_{n , d_n}$ such that $\overline{\pi}_{n + 1}(\theta_{n + 1 , j}) = \theta_{n , j}$ for $1 \leq j \leq d_n$. Furthermore, each $k[\Delta_n]$ is Noetherian Cohen-Macaulay by hypothesis, and so by Proposition \ref{prop: inclusion maps are flat}, the maps $\overline{\iota}_n$ are flat. Additionally, by Corollary \ref{cor: stanley-resiner direct limit}, $k[\Delta]$ is a direct limit of the direct system $\{ k[\Delta_n] , \overline{\iota}_n \}$.
\end{proof}

\begin{proof}[Proof of Theorem \ref{thm: main thm initial rings}]
Using the infinite Stanley-Reisner bijection (Proposition \ref{prop: infinite SR bijection}), it is clear that Theorems \ref{thm: main thm} and \ref{thm: main thm initial rings} are equivalent. 
\end{proof}


\section{Applications to Initial Ideals}\label{sec: Applications to Initial Ideals}


We now recall some background on Gr\"{o}bner bases (for a reference, see \cite[Chapter 15]{eisenbud2013commutative}). A \emph{monomial order} on a polynomial ring $k[x_1, \ldots, x_n]$ with finitely many variables is a total order $<$ of the monomials which satisfies (MO1) $1 \leq \textbf{x}^{(a_i)}$ for any monomial $\textbf{x}^{(a_i)}$, and (MO2) $\textbf{x}^{(a_i)} < \textbf{x}^{(b_i)}$ implies $\textbf{x}^{(a_i)} \textbf{x}^{(c_i)} < \textbf{x}^{(b_i)}  \textbf{x}^{(c_i)}$. Note that here $(a_i)$ denotes an element of $\zz^n$. Since $k[x_1, \ldots, x_n]$ is Noetherian, these conditions imply that $<$ is a well-order (i.e. every nonempty subset of monomials has a minimal element).

In \cite{iima2009grobner}, Iima and Yoshino give a definition of a monomial order and Gr\"{o}bner basis for an ideal in a polynomial ring with countably many variables $R\coloneq k[x_1, x_2, \ldots ]$ (\cite[Definition 1.1]{iima2009grobner}). In this circumstance, a monomial order is required to satisfy (MO1) and (MO2). However, since $R$ is not Noetherian, these do not necessarily imply that the order is well-ordered, and so it must be required separately that (MO3) $<$ is a well-order. For example, consider $k[x_1, x_2, \ldots]$ and order $x_1 < x_2 < \ldots$. Take the lexicographical order where $\textbf{x}^{(a_1, a_2, \ldots)} < \textbf{x}^{(b_1, b_2, \ldots)}$ if and only if $a_i < b_i$ for the maximal $i$ for which $a_i \neq b_i$. Note that such an $i$ must exist since $a_j = b_j = 0$ for all but finitely many $j$.

Let $f \in R$. The \emph{leading monomial} of $f$, denoted $\text{lm}_<(f)$, is the largest (with respect to $<$) monomial involved in $f$. The \emph{leading term} of $f$, denoted $\text{lt}_<(f)$, is the largest term of $f$, so that $\text{lt}_<(f) = c \text{lm}_<(f)$ for some $c \in k$. The \emph{initial ideal} with respect to $<$ of any subset $H \subseteq R$ is the monomial ideal $\text{in}_<(H)$ generated by the leading monomials (equivalently the leading terms) of all elements of $H$. If $G$ is a subset of some ideal $I \subseteq R$ and $f \in R$ is any polynomial, then the \emph{division algorithm} (\cite[Proposition 1.9]{iima2009grobner} implies that we can write 
\begin{equation*}
    f = \sum_i h_i g_i + r,
\end{equation*}
where $h_i \in R$, $g_i \in G$, none of the monomials involved in $r$ are in $\langle \text{lt}_<(g) \mid g \in G \rangle$, and $\text{lt}_<(h_i g_i) \leq \text{lt}_<(f)$. The element $r$ is called a \emph{remainder} of $f$ upon division by $G$. 

A \emph{Gr\"{o}bner basis} for an ideal $I \subseteq R$ is a subset $G \subseteq I$ such that the initial ideal $\text{in}_{<}(G)$ of $G$ is equal to the initial ideal $\text{in}_<(I)$ of $I$ (\cite[Definition 1.4]{iima2009grobner}). As in the case of finitely many variables, this implies that $G$ generates $I$. The \emph{$S$-polynomial} of $f , g \in R$ with respect to the term order $<$ is defined to be the following. 
\begin{equation*}
    S_{<}(f , g) = \frac{\text{lcm}( \text{lm}_{<}(f) , \text{lm}_{<}(g))}{\text{lt}_{<}(f)} f - \frac{\text{lcm}( \text{lm}_{<}(f) , \text{lm}_{<}(g))}{\text{lt}_{<}(g)} g
\end{equation*}

Note that $S_{<}(f , g)$ is always an element of $R$. The analog of Buchberger's Criterion for the infinite polynomial ring $R$ is the following result (\cite[Proposition 1.13]{iima2009grobner}).  

\begin{proposition}[Buchberger's Criterion]\label{Buchberger}
    A subset $G \subseteq R$ is a Gr\"{o}bner basis (for the ideal $\langle G \rangle$) if and only if for each $g , h \in G$, $0$ is a remainder of $S_{<}(g , h)$ upon division by $G$. 
\end{proposition}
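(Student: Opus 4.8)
\section*{Proof proposal}

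The plan is to run the classical proof of Buchberger's criterion, the only care needed being that the one spot where a non-Noetherian polynomial ring could misbehave --- namely, that a monomial order need not automatically be a well-order --- is precisely covered by axiom (MO3).

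\textbf{The easy direction.} Suppose $G$ is a Gr\"obner basis for $\langle G \rangle$, and fix $g, h \in G$. Since $S_<(g,h) \in \langle G \rangle$, the division algorithm (\cite[Proposition 1.9]{iima2009grobner}) writes $S_<(g,h) = \sum_i h_i g_i + r$ with $r$ a remainder. If $r \neq 0$, then $r = S_<(g,h) - \sum_i h_i g_i \in \langle G \rangle$, so $\text{lm}_<(r) \in \text{in}_<(\langle G \rangle) = \text{in}_<(G) = \langle \text{lt}_<(g) \mid g \in G \rangle$; but no monomial of $r$ lies in this ideal, by definition of a remainder --- a contradiction. Hence $r = 0$.

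\textbf{The converse.} Assume $0$ is a remainder of $S_<(g,h)$ for all $g,h \in G$. Since always $\text{in}_<(G) \subseteq \text{in}_<(\langle G\rangle)$, it suffices to show that for every nonzero $f \in \langle G \rangle$ the monomial $\text{lm}_<(f)$ is divisible by $\text{lm}_<(g)$ for some $g \in G$. Every element of $\langle G \rangle$ is a \emph{finite} combination, so among all expressions $f = \sum_{i=1}^s h_i g_i$ ($g_i \in G$, $h_i \in R$) consider the value $m = \max_{1 \le i \le s} \text{lm}_<(h_i g_i)$; by (MO3) the set of monomials arising as such an $m$ has a $<$-least element, so fix a representation realizing the minimal $m$. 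Clearly $\text{lm}_<(f) \le m$. If $m = \text{lm}_<(f)$, then some index $i$ satisfies $\text{lm}_<(h_i g_i) = \text{lm}_<(f)$, hence $\text{lm}_<(g_i) \mid \text{lm}_<(f)$ and we are done. So it remains to derive a contradiction from $m > \text{lm}_<(f)$.

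\textbf{Ruling out $m > \text{lm}_<(f)$.} Let $J = \{ i : \text{lm}_<(h_i g_i) = m \}$ and, for $i \in J$, write $\text{lt}_<(h_i) = a_i \mathbf{x}^{\alpha_i}$, so that $\text{lm}_<(\mathbf{x}^{\alpha_i} g_i) = m$. Splitting $f = \sum_{i \in J} a_i \mathbf{x}^{\alpha_i} g_i + \big( \sum_{i \in J} (h_i - a_i \mathbf{x}^{\alpha_i}) g_i + \sum_{i \notin J} h_i g_i \big)$ and noting that both $f$ and the bracketed part have leading monomial $< m$, the sum $\sum_{i \in J} a_i \mathbf{x}^{\alpha_i} g_i$ has leading monomial $< m$; in particular the top coefficients cancel. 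The standard cancellation lemma (a telescoping sum exploiting that these top coefficients sum to zero) then expresses $\sum_{i \in J} a_i \mathbf{x}^{\alpha_i} g_i = \sum c_{ij} S_<(\mathbf{x}^{\alpha_i} g_i, \mathbf{x}^{\alpha_j} g_j)$ with $c_{ij} \in k$, each such $S$-polynomial having leading monomial $< m$. A direct computation with least common multiples gives $S_<(\mathbf{x}^{\alpha_i} g_i, \mathbf{x}^{\alpha_j} g_j) = \frac{m}{m_{ij}} S_<(g_i, g_j)$, where $m_{ij} = \text{lcm}(\text{lm}_<(g_i), \text{lm}_<(g_j)) \mid m$. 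By hypothesis $S_<(g_i, g_j) = \sum_k b_{ijk} g_k$ with $\text{lt}_<(b_{ijk} g_k) \le \text{lt}_<(S_<(g_i, g_j))$; since $\text{lm}_<(S_<(g_i,g_j)) < m_{ij}$, multiplying through by $\frac{m}{m_{ij}}$ yields a combination $\sum_k \tilde h_k g_k$ in which each term has leading monomial $< m$. Substituting all of this back into the displayed splitting of $f$ produces a new representation $f = \sum_k h_k' g_k$ with $\max_k \text{lm}_<(h_k' g_k) < m$, contradicting minimality of $m$. Therefore $m = \text{lm}_<(f)$, and the claim follows.

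\textbf{Main obstacle.} The only genuinely new point compared with the finite-variable case is justifying the ``minimal representation'' step in a non-Noetherian ring: one needs the set of possible values of $m$ to have a least element, which is exactly what (MO3) provides (and it is also what makes the division algorithm of \cite[Proposition 1.9]{iima2009grobner} terminate). The two supporting facts --- the cancellation lemma and the identity $S_<(\mathbf{x}^{\alpha} g_i, \mathbf{x}^{\beta} g_j) = \tfrac{m}{m_{ij}} S_<(g_i, g_j)$ --- are purely formal manipulations of leading terms, insensitive to the number of variables, so no additional care is required there. (This is \cite[Proposition 1.13]{iima2009grobner}, so one could alternatively just cite it; the sketch above indicates why it holds.)
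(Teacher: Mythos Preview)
Your proof is correct. The paper does not actually prove this proposition: it simply states the result and attributes it to \cite[Proposition~1.13]{iima2009grobner}. You, by contrast, supply a self-contained argument by adapting the classical Buchberger proof and correctly identify that the only non-Noetherian subtlety---the existence of a minimal representation---is handled by axiom (MO3). Your final paragraph even acknowledges that one could simply cite the result, which is precisely what the paper does. So there is no discrepancy to flag: your sketch is a strict elaboration of the paper's citation, and the argument is sound.
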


We now apply Buchberger's Criterion to describe how Gr\"{o}bner bases behave with respect to inclusions of polynomial rings with countably many variables. Let $X \subseteq Y$ be finite or countable sets, $R \coloneq k[X]$ and $S \coloneq k[Y]$ polynomial rings, $\iota : R \to S$ the inclusion map which sends $x \mapsto x$, and $\pi : S \to R$ the projection map which sends $y \mapsto y$ if $y \in X$ and $y \mapsto 0$ otherwise. We say that a pair of term orders $<_R$ on $R$ and $<_S$ on $S$ is \emph{compatible} with $\iota$ (respectively, $\pi$) if for all $f \in R$, we have $\iota(\text{lt}_{<_R}(f)) = \text{lt}_{<_S}(\iota(f))$, (respectively, for all $g \in S$ with $\text{lt}_{<_S}(g) \in \im \iota$, we have that $\pi(\text{lt}_{<_S}(g)) = \text{lt}_{<_R}(\pi(g)$).


\begin{lemma}\label{lem: comtainment of ideals implies containment of initials}
    Let $I \subseteq R$ and $J \subseteq S$ be ideals. 
    \begin{enumerate}
        \item If $\iota(I) \subseteq J$ and the pair of term orders $<_R$ and $<_S$ is compatible with $\iota$, then $\iota( \text{in}_{<_R}(I) ) \subseteq \text{in}_{<_{S}}(J)$.
        \item If $\pi(J) \subseteq I$ and the pair of term orders is compatible with $\pi$, then $\pi( \text{in}_{<_S}(J) ) \subseteq \text{in}_{<_R}(I)$.
    \end{enumerate}  
\end{lemma}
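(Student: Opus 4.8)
The plan is to prove each containment by showing that a monomial generating set of the left-hand initial ideal maps into the right-hand initial ideal; since initial ideals are monomial ideals (Proposition \ref{prop: infinite dickson}), it suffices to check this on leading monomials of arbitrary ideal elements. For part (1), suppose $\mathbf{x}^{(a_i)} \in \text{in}_{<_R}(I)$ is a leading monomial, say $\mathbf{x}^{(a_i)} = \text{lm}_{<_R}(f)$ for some $f \in I$ (up to the scalar distinguishing leading term from leading monomial). Then $\iota(f) \in \iota(I) \subseteq J$, so $\text{lt}_{<_S}(\iota(f)) \in \text{in}_{<_S}(J)$. By compatibility of $<_R$ and $<_S$ with $\iota$, we have $\iota(\text{lt}_{<_R}(f)) = \text{lt}_{<_S}(\iota(f))$, hence $\iota(\mathbf{x}^{(a_i)})$ (which equals $\iota(\text{lm}_{<_R}(f))$, the monomial underlying $\iota(\text{lt}_{<_R}(f))$) lies in $\text{in}_{<_S}(J)$. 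Since the leading monomials of elements of $I$ generate $\text{in}_{<_R}(I)$ and $\iota$ is a ring homomorphism, $\iota(\text{in}_{<_R}(I)) \subseteq \text{in}_{<_S}(J)$.

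For part (2), the argument is parallel but one must be careful about the hypothesis in the definition of compatibility with $\pi$, which only constrains leading terms lying in $\im \iota$. Take a leading monomial $m = \text{lm}_{<_S}(g)$ of some $g \in J$, so $\text{lt}_{<_S}(g) = cm$. I want to show $\pi(m) \in \text{in}_{<_R}(I)$. If $m \notin \im\iota$ — equivalently, $m$ involves a variable in $Y \smallsetminus X$ — then $\pi(m) = 0 \in \text{in}_{<_R}(I)$ and we are done. If $m \in \im\iota$, then by compatibility with $\pi$ we get $\pi(\text{lt}_{<_S}(g)) = \text{lt}_{<_R}(\pi(g))$; since $\pi(g) \in \pi(J) \subseteq I$, its leading term lies in $\text{in}_{<_R}(I)$, so $\pi(m)$ (the monomial underlying $\pi(\text{lt}_{<_S}(g))$, after dividing by $c$) is in $\text{in}_{<_R}(I)$. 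As before, $\text{in}_{<_S}(J)$ is generated by the leading monomials of elements of $J$ and $\pi$ is a ring homomorphism, so $\pi(\text{in}_{<_S}(J)) \subseteq \text{in}_{<_R}(I)$.

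The main subtlety — not really an obstacle, but the point requiring the most care — is the bookkeeping around generating sets of monomial ideals in countably many variables and the distinction between leading terms and leading monomials. One needs to invoke that a monomial ideal is generated by the set of monomials it contains (stated in the excerpt after the discussion of the $\zz^{\oplus\nn}$-grading), so that it suffices to track where individual leading monomials go; and one needs that $\iota$ and $\pi$ send monomials to monomials (for $\pi$, possibly to $0$), which is immediate from their definitions on variables. A clean way to organize this is: $\text{in}_{<_R}(I) = \langle \text{lm}_{<_R}(f) \mid f \in I \rangle$, apply the ring homomorphism to the generating set, use compatibility to identify the image generators with leading monomials of elements of the target ideal, and conclude. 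I would also remark that in case (2) the reduction to $\pi(m) = 0$ when $m \notin \im\iota$ is exactly why the compatibility hypothesis for $\pi$ is only imposed on leading terms in $\im\iota$, so nothing is lost.
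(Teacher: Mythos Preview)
Your proof is correct and follows essentially the same approach as the paper's: check that the leading term of an arbitrary element of $I$ (resp.\ $J$) maps under $\iota$ (resp.\ $\pi$) into $\text{in}_{<_S}(J)$ (resp.\ $\text{in}_{<_R}(I)$) using the compatibility hypothesis, and conclude since these leading terms generate the initial ideal. The paper's version is terser---it handles case (1) in three lines and dismisses case (2) as ``similar''---whereas you spell out the case split on whether $m \in \im\iota$, which is precisely the detail the paper suppresses.
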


\begin{proof}

In case (1), given $f \in I$, $\iota(\text{lt}_{<_R}(f))=\text{lt}_{<_S}(\iota(f))$, since $<_R$ and $<_S$ are compatible with $\iota$. By hypothesis, we have that $\iota (I) \subseteq J$, and so $\iota(f) \in J$. Therefore, $\text{lt}_{<_S}(\iota(f)) \in \text{in}_{<_S}(J)$, and the claim follows. The proof of case (2) follows in a similar manner.
\end{proof}

\begin{lemma}\label{lem: compatible orders and initial ideals}
    If $I \subseteq R$ is any ideal and $<_R$ and $<_S$ are compatible with $\iota$, then $\iota(\text{in}_{<_R}(I)) S = \text{in}_{<_S}(\iota(I)S)$. 
\end{lemma}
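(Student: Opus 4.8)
The plan is to produce an \emph{explicit} Gr\"{o}bner basis of the extended ideal $\iota(I)S$ and then simply read off its initial ideal. The key preliminary observation is that $\iota$ is compatible with the formation of $S$-polynomials. Indeed, for a nonzero $f \in R$ the hypothesis $\iota(\text{lt}_{<_R}(f)) = \text{lt}_{<_S}(\iota(f))$ together with the fact that $\iota$ fixes scalars gives $\iota(\text{lm}_{<_R}(f)) = \text{lm}_{<_S}(\iota(f))$; and since $\iota$ sends a monomial to the same monomial and $\text{lcm}$ of monomials is computed exponent-wise, $\iota\big(\text{lcm}(\text{lm}_{<_R}(f),\text{lm}_{<_R}(g))\big) = \text{lcm}(\text{lm}_{<_S}(\iota(f)),\text{lm}_{<_S}(\iota(g)))$. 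As $\iota$ is a ring homomorphism, comparing the defining formulas yields $S_{<_S}(\iota(f),\iota(g)) = \iota(S_{<_R}(f,g))$ for all nonzero $f,g\in R$.

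Next, take $G \coloneq I \smallsetminus \{0\}$, which is trivially a Gr\"{o}bner basis of $I$ with respect to $<_R$, and note that $\iota(G) = \iota(I) \smallsetminus \{0\}$ generates the ideal $\iota(I)S$. I claim $\iota(G)$ is a Gr\"{o}bner basis of $\iota(I)S$ with respect to $<_S$, which I will verify via Buchberger's Criterion (Proposition \ref{Buchberger}) applied in $S$. Given $g,h\in G$, the element $S_{<_R}(g,h)$ lies in $I$, being an $R$-linear combination of $g$ and $h$. If $S_{<_R}(g,h)=0$ there is nothing to check; otherwise $\iota(S_{<_R}(g,h))$ is a nonzero element of $\iota(I)$, hence lies in $\iota(G)$, and the one-term expression
\begin{equation*}
S_{<_S}(\iota(g),\iota(h)) = 1\cdot \iota(S_{<_R}(g,h)) + 0
\end{equation*}
is a valid division of $S_{<_S}(\iota(g),\iota(h))$ by $\iota(G)$ with remainder $0$: the divisor lies in $\iota(G)$, the remainder $0$ involves no monomials, and the leading-term condition $\text{lt}_{<_S}(1\cdot\iota(S_{<_R}(g,h))) \le \text{lt}_{<_S}\big(S_{<_S}(\iota(g),\iota(h))\big)$ holds because the two sides are equal (using $S_{<_S}(\iota(g),\iota(h)) = \iota(S_{<_R}(g,h))$). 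Since every pair of elements of $\iota(G)$ has this form, Buchberger's Criterion gives the claim.

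It then remains to compute. By the definition of a Gr\"{o}bner basis, $\text{in}_{<_S}(\iota(I)S)$ is generated by $\{\text{lt}_{<_S}(\iota(g)) \mid g\in G\}$, which by leading-term compatibility equals $\{\iota(\text{lt}_{<_R}(g)) \mid g\in G\}$; since $G$ is a Gr\"{o}bner basis of $I$, the set $\{\text{lt}_{<_R}(g) \mid g\in G\}$ generates $\text{in}_{<_R}(I)$ in $R$, and for any subset $T\subseteq R$ the ideal of $S$ generated by $\iota(T)$ equals $\iota(\langle T\rangle_R)S$. Combining these identities gives $\text{in}_{<_S}(\iota(I)S) = \iota(\text{in}_{<_R}(I))S$, as desired. (As a sanity check, the inclusion $\iota(\text{in}_{<_R}(I))S \subseteq \text{in}_{<_S}(\iota(I)S)$ also follows directly from part (1) of Lemma \ref{lem: comtainment of ideals implies containment of initials}, since $\iota(I)\subseteq \iota(I)S$.)

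The main obstacle is not conceptual but bookkeeping: one must invoke the infinite-variable division algorithm and Buchberger's criterion of \cite{iima2009grobner} with all hypotheses genuinely in place --- in particular checking that our one-term reduction really is a \emph{remainder} in their technical sense (the leading-term bound $\text{lt}_{<_S}(h_i g_i)\le \text{lt}_{<_S}(f)$) --- and one must be careful that the compatibility hypothesis on the pair $(<_R,<_S)$ suffices to transport leading monomials, leading coefficients, and least common multiples across $\iota$. Each of these points is routine, but this is where the care is needed.
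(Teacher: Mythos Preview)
Your proof is correct and follows essentially the same route as the paper's: take $G = I$ (or $I\smallsetminus\{0\}$) as a trivial Gr\"obner basis, use compatibility of $<_R$ and $<_S$ with $\iota$ to show that $S$-polynomials are preserved, verify Buchberger's Criterion for $\iota(G)$ in $S$, and then read off the initial ideal. Your one-term reduction (using that $S_{<_R}(g,h)$ already lies in $G\cup\{0\}$) is a slight streamlining of the paper's invocation of the forward direction of Buchberger's Criterion to obtain the expression $S_{<_R}(f,g)=\sum_i f_ig_i$, but the underlying argument is identical.
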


\begin{proof}
Let $G$ be a Gr\"{o}bner basis for $I$. We could, for example, take $G = I$. Then, by Proposition \ref{Buchberger}, for any $f, g \in G$, we can write $S_{<}(f , g) = \sum_i f_i g_i$ where $f_i \in R$ and $g_i \in G$. 
Since $<_R$ and $<_S$ are compatible with $\iota$, we have that $\text{lm}_{<R}(f) = \text{lm}_{<S}(f)$ and $\text{lt}_{<_R}(f) = \text{lt}_{<_S}(f)$ (and similarly for $g$). Thus, applying $\iota$, we obtain: 
\begin{align*}
    \iota(S_{<R}(f , g)) &= \frac{\text{lcm}( \text{lm}_{<_R}(f) , \text{lm}_{<_R}(g))}{\text{lt}_{<_R}(f)} f - \frac{\text{lcm}( \text{lm}_{<_R}(f) , \text{lm}_{<_R}(g))}{\text{lt}_{<_R}(g)} g 
    \\&= \frac{\text{lcm}( \text{lm}_{<_S}(f) , \text{lm}_{<_S}(g))}{\text{lt}_{<_S}(f)} f - \frac{\text{lcm}( \text{lm}_{<_S}(f) , \text{lm}_{<_S}(g))}{\text{lt}_{<_S}(g)} g
    \\&= S_{<S}(f , g).
\end{align*}  

On the other hand, we have $\iota(S_{<_R}(f , g)) = \iota( \sum_i f_i g_i ) = \sum_i f_i g_i$ with $f_i \in S$ and $g_i \in G$. So, $0$ is a remainder of $\iota(S_{<_R}(f , g)) = S_{<S}(f , g)$ with respect to $G$ in $S$ as well. Hence, once again, by Proposition \ref{Buchberger}, $\iota(G)$ is a Gr\"{o}bner basis in $S$ for the ideal $\iota(G)S = \iota(I)S$. So, we can compute as follows: 
\begin{equation*}
    \iota(\text{in}_{<_R}(I))S \overset{(1)}{=} \iota(\text{in}_{<_R}(G))S \overset{(2)}{=} \iota(\{ \text{lm}_{<_R}(f) \mid f \in G \})S \overset{(3)}{=} \{ \text{lm}_{<_S}(\iota(f)) \mid f \in G \} S  \overset{(4)}{=} \text{in}_{<_S}(\iota(I)S). 
\end{equation*}

(1) follows because $G$ is a Gr\"{o}bner basis for $I$. (2) follows because for any ideal $J$ in any ring $T$ with generating set $H \subseteq J$, and for any ring homomorphism $\varphi : T \to T'$, we have that $\varphi(J) T' = \varphi(H) T'$. (3) follows because $<_R$ and $<_S$ are compatible with $\iota$. (4) follows because $\iota(G)$ is a Gr\"{o}bner basis for $\iota(I) S$, as we just proved. 
\end{proof}

One way to obtain compatible orders is to embed $R$ and $S$ into some larger polynomial ring with a monomial ordering, and then restrict this order. For the rest of this section, suppose we are in situation \hyperlink{situation: ast}{$(\ast)$}. Let $<$ be a monomial order on $R$ and $<_n$ be the restriction of $<$ to the set of monomials contained in $R_n$.

\begin{lemma}\label{lem: restricted orders are compatible}
For all $n \in \nn$, the pair of monomial orders $<_n$ and $<$ is compatible with the inclusion map $\eta_n$. The monomial orders $<_n$ and $<_{n + 1}$ are compatible with the inclusion map $\iota_n$ and the projection map $\pi_n$. 
\end{lemma}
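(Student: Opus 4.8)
The plan is to unwind the definition of compatibility and to use the single structural fact that every $<_n$ is the restriction of the one fixed order $<$; hence whenever one polynomial ring in play sits inside another, the two relevant orders agree on the monomials of the smaller ring.

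First I would record the observation underlying everything. Suppose $T \subseteq T'$ is one of the inclusions $R_n \subseteq R$ or $R_n \subseteq R_{n+1}$, equipped with the orders that are, by definition, the restrictions of $<$ to the monomials of $T$ and of $T'$ respectively. For a nonzero $f \in T$, every monomial occurring in $f$ is a monomial of $T$, hence of $T'$, and the two orders agree on this (finite) set of monomials, since each is $<$ restricted to it. Therefore the largest such monomial is the same in both, i.e.\ $\text{lm}(f)$ is independent of whether we compute in $T$ or in $T'$, and since the coefficients of $f$ are unchanged, so is $\text{lt}(f)$. Taking $T = R_n$, $T' = R$ and the inclusion $\eta_n$ gives $\eta_n(\text{lt}_{<_n}(f)) = \text{lt}_<(\eta_n(f))$ for all $f \in R_n$, which is precisely compatibility of $(<_n , <)$ with $\eta_n$; taking $T = R_n$, $T' = R_{n+1}$ gives compatibility of $(<_n , <_{n+1})$ with $\iota_n$.

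Next I would handle the projection $\pi_n : R_{n+1} \to R_n$. Here the relevant point is that $\pi_n$ acts on $g \in R_{n+1}$ simply by deleting the terms whose monomial involves a variable of $X_{n+1} \smallsetminus X_n$ and keeping the remaining terms verbatim. So suppose $g \ne 0$ and $\text{lt}_{<_{n+1}}(g) \in \im \iota_n$; writing $m = \text{lm}_{<_{n+1}}(g)$ and $c$ for its coefficient, the hypothesis says that $m$ is a monomial of $R_n$. Then $m$ is not deleted by $\pi_n$, so $\pi_n(g) \ne 0$ and $m$ appears in $\pi_n(g)$ with coefficient $c$; moreover every monomial of $\pi_n(g)$ already appeared in $g$, hence is $\leq_{<_{n+1}} m$, and all such monomials lie in $R_n$, so by the previous paragraph they are $\leq_{<_n} m$ as well. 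Thus $m = \text{lm}_{<_n}(\pi_n(g))$ and $\text{lt}_{<_n}(\pi_n(g)) = cm = \pi_n(cm) = \pi_n(\text{lt}_{<_{n+1}}(g))$, i.e.\ $(<_n , <_{n+1})$ is compatible with $\pi_n$.

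I do not anticipate a genuine obstacle: the inclusion cases are immediate from the ``restriction of one order'' remark, and the only slightly delicate point is the projection case, where one must argue both that the leading monomial of $g$ survives $\pi_n$ and that none of the deleted terms could have outranked it — both of which follow at once from the fact that $\pi_n$ only removes terms (never creating new monomials) together with the agreement of $<_n$ and $<_{n+1}$ on $R_n$.
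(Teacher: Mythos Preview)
Your proposal is correct and follows essentially the same approach as the paper: both argue the inclusion cases directly from the fact that $<_n$, $<_{n+1}$, and $<$ are all restrictions of one fixed order, and both handle $\pi_n$ by observing that if the leading monomial of $g$ lies in $R_n$ then it survives $\pi_n$ and still dominates every other surviving monomial. Your write-up is somewhat more explicit (tracking the coefficient and noting $\pi_n(g)\neq 0$), but the underlying argument is the same.
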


\begin{proof}
Let $f \in I_n$ for some arbitrary $n \in \nn$. Because the monomial orders $<_n$ and $<_{n+1}$ are both restrictions of $<$, we have that $\text{lm}_<(\eta_n(f)) = \eta_n(\text{lm}_{<_n}(f))$ and $\text{lm}_{<_{n+1}}(\iota_n(f)) = \iota_n(\text{lm}_{<_n}(f))$. If $g \in I_{n + 1}$ and $\text{lm}_{<_{n + 1}}(g)$ involves only variables from $X_n$, then $\text{lm}_{<_{n + 1}}(g)$ is in particular larger than any other monomial of $g$ which only involves variables from $X_n$. Hence, $\pi_n(\text{lm}_{<_{n + 1}}(g)) = \text{lm}_{<_{n + 1}}(g) = \text{lm}_n(\pi_n(g))$ as desired.
\end{proof}

\begin{lemma}\label{lem: union of initials is initial}
    We have that $\text{in}_< ( \bigcup_{n \in \nn} \eta_n(I_n) R ) = \bigcup_{n \in \nn} \eta_n( \text{in}_{<_n} (I_n))R$. 
\end{lemma}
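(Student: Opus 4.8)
The plan is to prove the two inclusions separately, the key input being Lemma~\ref{lem: compatible orders and initial ideals} applied to each inclusion $\eta_n\colon R_n\to R$. Write $I=\bigcup_{n\in\nn}\eta_n(I_n)R$ and $J=\bigcup_{n\in\nn}\eta_n(\text{in}_{<_n}(I_n))R$ for the two sides. First I would record that, under the hypothesis $\iota_n(I_n)\subseteq I_{n+1}$, both $I$ and $J$ are nested unions of ideals, hence ideals: for $I$ this is because $\eta_n=\eta_{n+1}\circ\iota_n$, so $\eta_n(I_n)=\eta_{n+1}(\iota_n(I_n))\subseteq\eta_{n+1}(I_{n+1})$; for $J$, Lemma~\ref{lem: restricted orders are compatible} says $<_n$ and $<_{n+1}$ are compatible with $\iota_n$, so Lemma~\ref{lem: comtainment of ideals implies containment of initials}(1) gives $\iota_n(\text{in}_{<_n}(I_n))\subseteq\text{in}_{<_{n+1}}(I_{n+1})$, and applying $\eta_{n+1}$ and extending to $R$ yields $\eta_n(\text{in}_{<_n}(I_n))R\subseteq\eta_{n+1}(\text{in}_{<_{n+1}}(I_{n+1}))R$.

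The main lemma does all the work at each finite stage: by Lemma~\ref{lem: restricted orders are compatible} the pair $<_n,<$ is compatible with $\eta_n$, so Lemma~\ref{lem: compatible orders and initial ideals} (with $R_n$ in place of $R$, $R$ in place of $S$, $\eta_n$ in place of $\iota$, and $I_n$ in place of $I$) gives $\eta_n(\text{in}_{<_n}(I_n))R=\text{in}_<(\eta_n(I_n)R)$ for every $n$.

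For $J\subseteq\text{in}_<(I)$: each $\eta_n(I_n)R$ is a subset of $I$, and $\text{in}_<$ is monotone with respect to inclusion of subsets of $R$, so $\eta_n(\text{in}_{<_n}(I_n))R=\text{in}_<(\eta_n(I_n)R)\subseteq\text{in}_<(I)$; taking the union over $n$ gives the claim. For $\text{in}_<(I)\subseteq J$: by definition $\text{in}_<(I)$ is the ideal generated by the leading monomials $\text{lm}_<(f)$ with $f\in I$. Since $I$ is the nested union, any such $f$ lies in some $\eta_n(I_n)R$, whence $\text{lm}_<(f)\in\text{in}_<(\eta_n(I_n)R)=\eta_n(\text{in}_{<_n}(I_n))R\subseteq J$. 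As $J$ is an ideal containing every generator of $\text{in}_<(I)$, it contains $\text{in}_<(I)$, and the two inclusions together give equality.

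I do not anticipate a real obstacle: all the substance is concentrated in Lemma~\ref{lem: compatible orders and initial ideals} (itself resting on Buchberger's criterion), and the remainder is bookkeeping with unions and with the monotonicity of $\text{in}_<$. The one point requiring care is that both $I$ and $J$ must genuinely be ideals, so that one may pass freely between ``the ideal generated by the leading monomials of the elements'' and ``the set of those leading monomials''; this is exactly where the hypothesis $\iota_n(I_n)\subseteq I_{n+1}$ enters, and without some such nesting hypothesis the right-hand side need not be an ideal and the identity can fail.
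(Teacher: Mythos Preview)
Your argument is correct and follows essentially the same route as the paper: identify $\eta_n(\text{in}_{<_n}(I_n))R$ with $\text{in}_<(\eta_n(I_n)R)$ via Lemma~\ref{lem: compatible orders and initial ideals}, then prove the two inclusions by monotonicity of $\text{in}_<$ and by tracking leading monomials of elements of the union. You are in fact more careful than the paper on one point: the paper's proof concludes $\text{in}_<(I)\subseteq\bigcup_n\text{in}_<(\eta_n(I_n)R)$ from the fact that every generator of the left side lies in the right side, which is only valid once one knows the right side is an ideal; the paper leaves this implicit, whereas you explicitly invoke the nesting hypothesis $\iota_n(I_n)\subseteq I_{n+1}$ (together with Lemma~\ref{lem: comtainment of ideals implies containment of initials}) to verify it, and you correctly observe that without such a hypothesis the identity can fail.
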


\begin{proof}
By Lemma \ref{lem: restricted orders are compatible}, the pair of orders $<_n$ and $<$ is compatible with the inclusion map $\eta_n$. Hence, by Lemma \ref{lem: compatible orders and initial ideals}, we have that $\eta_n( \text{in}_{<_n} (I_n))R = \text{in}_{<}(\eta_n(I_n)R)$. Thus, we have that $\bigcup_{n \in \nn} \eta_n( \text{in}_{<_n} (I_n))R = \bigcup_{n \in \nn} \text{in}_{<} (\eta_n(I_n)R)$, where the latter is clearly contained inside $\text{in}_< ( \bigcup_{n \in \nn} \eta_n(I_n) R )$. For the reverse containment, if $f \in \bigcup_{n \in \nn} \eta_n(I_n) R$, then there exists some $n \in \nn$ such that $f \in \eta_n(I_n) R$. Therefore, $\text{lm}_<(f) \in \text{in}_<( \eta_n(I_n) R )$. But, by definition, $\text{in}_< ( \bigcup_{n \in \nn} \eta_n(I_n) R )$ is generated by leading monomials of this form. So, $\text{in}_< ( \bigcup_{n \in \nn} \eta_n(I_n) R ) \subseteq \bigcup_{n \in \nn} \text{in}_{<} (\eta_n(I_n)R) = \bigcup_{n \in \nn} \eta_n( \text{in}_{<_n} (I_n))R$, as desired. 
\end{proof}




\begin{proof}[Proof of Theorem \ref{thm: main thm rings}]
By Lemma \ref{lem: restricted orders are compatible}, we have that $<_n$ and $<_{n + 1}$ are compatible with $\iota_n$ and $\pi_n$. Therefore, by Lemma \ref{lem: comtainment of ideals implies containment of initials}, we have that $\iota_n(\text{in}_{<_n}(I_n)) \subseteq \text{in}_{<_{n + 1}}(I_{n + 1})$ and $\pi_n(\text{in}_{<_{n + 1}}(I_{n + 1})) \subseteq \text{in}_{<_{n}}(I_{n})$. From Lemma \ref{lem: union of initials is initial}, it follows that $\bigcup_{n \in \nn} \eta_n( \text{in}_{<_n} (I_n))R=\operatorname{in}_{<}(I)$. The result follows from Theorem \ref{thm: main thm initial rings}.
\end{proof}


\section{Infinite Matrix Schubert Varieties} \label{sec: Infinite Matrix Schubert Varieties}

For $m \leq n$, let $S_{m , n}$ denote the set of injections $\{ 1, \ldots, m \} \to \{ 1, \ldots, n \}$. Elements of $S_{m , n}$ are called \emph{partial permutations}. Given any $\sigma \in S_{m , n}$, denote by $[\sigma]$ the $m \times n$ \textit{partial permutation matrix of $\sigma$}, which, by definition, has a $1$ in entry $(j , \sigma(j))$ and zeros elsewhere. Define the \textit{rank matrix of $\sigma$}, namely $r(\sigma)$, to be the $m \times n$ matrix whose $(i , j)$ entry is the number of $1$'s both weakly to the left and above the $(i , j)$ entry in $[\sigma]$. The \textit{Schubert determinantal ideal} $I_{\sigma}$ is defined to be the ideal of the polynomial ring $k[x_{i , j} \mid 1 \leq i \leq m, \, 1 \leq j \leq n]$ generated by the minors of size $r(\sigma)_{k , l} + 1$ (for all choices of $k\in[m]$ and $l\in[n]$) of the matrix of variables $[x_{i , j}]_{1 \leq i \leq k , \, 1 \leq j \leq l}$. The vanishing set of $I_{\sigma}$, denoted by $X_{\sigma}$, is called the \emph{matrix Schubert variety} corresponding to the partial permutation $\sigma$. By results of Fulton \cite[Proposition 3.3]{fulton_flags_1992}, $I_{\sigma}$ is a prime ideal, and thus $k[x_{i , j}] / I_{\sigma}$ is the coordinate ring of this variety. 

\begin{example}
    Let $\sigma=2531\in S_{4,5}$. Notice that 
    \begin{align*}
    [\sigma]=\begin{pmatrix}
    0 & 1 & 0 & 0 & 0 \\
    0 & 0 & 0 & 0 & 1\\
    0 & 0 & 1 & 0 & 0\\
    1 & 0 & 0 & 0 & 0
    \end{pmatrix},
    \hspace{10ex}
    r(\sigma)=\begin{pmatrix}
    0 & 1 & 1 & 1 & 1 \\
    0 & 1 & 1 & 1 & 2\\
    0 & 1 & 2 & 2 & 3\\
    1 & 2 & 3 & 3 & 4
    \end{pmatrix},
    \end{align*}
    and \begin{align*}I_{\sigma}=\langle x_{1,1},x_{2,1},x_{3,1},
    x_{1,3}x_{2,2}-x_{1,2}x_{2,3},
    x_{1,4}x_{2,2}-x_{1,2}x_{2,4},
    x_{1,4}x_{2,3}-x_{1,3}x_{2,4}\rangle.
    \end{align*}
\end{example}


Denote by $S_\infty$ the set of bijections from the natural numbers to the natural numbers. Suppose $\sigma \in S_\infty$. Then, for each $m \in \nn$, the restriction $\sigma|_{\{ 1 , \ldots , m \}}$ is a partial permutation in 
$S_{m, \max \sigma([m])}$, 
namely the \emph{$m$th partial permutation of $\sigma$}, denoted by $\sigma_m$. The \emph{permutation matrix} of $\sigma_m$ is a 
$m \times \max \sigma([m])$
matrix with a $1$ entry in $(j, \sigma(j))$ and $0$'s elsewhere. Denote by $T_m$ the polynomial ring $k[x_{i , j} \mid 1 \leq i \leq m, 1 \leq j \leq \max \sigma([m])]$ with variables indexed by entries in a $m\times \max \sigma([m])$ matrix. Let $\iota_m : T_m \to T_{m + 1}$ and $\pi_m: T_{m + 1} \to T_m$ be the inclusion and projection maps. Define $T \coloneq k[x_{i , j} \mid i , j \in \nn]$.

\begin{example}\label{example:infiniteSchubIdeal}
Define $\sigma:\nn\to\nn$ such that
\begin{align*}
\sigma(m)=
\begin{cases}
1 &\text{if $m=1$},\\
m+1 &\text{if $m\in 2\nn$},\\
m-1 &\text{if $m\in 2\nn+1$}.
\end{cases}
\end{align*}
For each $\sigma_m$ where $m\in\nn$, we obtain the Schubert determinantal ideal
\begin{align*}
I_{\sigma_m}=\left\langle \det\left([x_{i , j}]_{1 \leq i,j \leq n}\right)\mid n\leq m\text{ and } n\in2\nn\right\rangle 
\end{align*}
in its respective ring $T_m$. Notice that $I_{\sigma_m}$ and $I_{\sigma_{m+1}}$ have the same generating set for $m\in 2\nn$.
\end{example}

A term order on $k[x_{i , j} \mid 1 \leq i \leq m, 1 \leq j \leq n]$ is called \emph{antidiagonal} if the initial term of any minor of the variable matrix $[x_{i , j}]_{1 \leq i \leq m , 1 \leq j \leq n}$ is the antidiagonal term. 
Work of Knutson, Miller, and Sturmfels shows that the minors defining $I_{\sigma}$ form a Gr\"{o}bner basis with respect to any antidiagonal term order. 

For the remainder of this paper, we fix an antidiagonal term order $<$. Denote by $\text{in}_<(I_\sigma)$ the initial ideal of $I_{\sigma}$ with respect to this order. Note that this is a square-free mononial ideal. Let $\Delta_\sigma$ denote the simplicial complex associated to $\text{in}_<(I_\sigma)$.

\begin{lemma}\label{lem: iota and pi preserve schubert ideals}
    $\iota_m(I_{\sigma_m}) \subseteq I_{\sigma_{m + 1}}$ and $\pi_m(I_{\sigma_{m + 1}}) \subseteq I_{\sigma_{m}}$.
\end{lemma}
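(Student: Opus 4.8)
The plan is to reduce both inclusions to a single combinatorial fact about rank matrices. Write $n_m \coloneq \max\sigma([m])$, so that $T_m = k[x_{i,j} \mid 1\le i\le m,\ 1\le j\le n_m]$ and $n_m \le n_{m+1}$. The key observation is:
\begin{equation*}
r(\sigma_{m+1})_{k,l} = r(\sigma_m)_{k,l} \qquad \text{whenever } k\le m \text{ and } l\le n_m .
\end{equation*}
This holds because the first $m$ rows of the permutation matrix $[\sigma_{m+1}]$ carry their $1$'s in positions $(j,\sigma(j))$ with $j\le m$, hence with $\sigma(j)\le n_m$; so the number of $1$'s weakly northwest of $(k,l)$ is the same in $[\sigma_m]$ and in $[\sigma_{m+1}]$. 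I will also use two elementary facts: (i) $r(\sigma_{m+1})_{k,l}$ is weakly increasing in $k$ and in $l$, since enlarging the northwest region can only add $1$'s; and (ii) for any matrix of indeterminates, the ideal generated by its size-$t$ minors contains the ideal generated by its size-$(t+1)$ minors, by Laplace expansion along a row.

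For the containment $\iota_m(I_{\sigma_m}) \subseteq I_{\sigma_{m+1}}$ it suffices to check generators. A defining generator of $I_{\sigma_m}$ is a size-$(r(\sigma_m)_{k,l}+1)$ minor $P$ of $[x_{i,j}]_{i\le k,\ j\le l}$ with $k\le m$ and $l\le n_m$; since $\iota_m$ is the inclusion, $\iota_m(P) = P$. By the key observation $r(\sigma_m)_{k,l}+1 = r(\sigma_{m+1})_{k,l}+1$, and since $k\le m+1$ and $l\le n_{m+1}$, the element $P$ is literally one of the defining generators of $I_{\sigma_{m+1}}$ associated to the pair $(k,l)$. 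Hence $\iota_m$ maps generators to generators, and the inclusion of ideals follows.

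For $\pi_m(I_{\sigma_{m+1}}) \subseteq I_{\sigma_m}$, again take a defining generator $P = \det[x_{i,j}]_{i\in R,\ j\in C}$ of $I_{\sigma_{m+1}}$, where $R \subseteq \{1,\dots,k\}$, $C\subseteq\{1,\dots,l\}$, $k\le m+1$, $l\le n_{m+1}$, and $|R| = |C| = r(\sigma_{m+1})_{k,l}+1$. The map $\pi_m$ kills every variable $x_{i,j}$ with $i = m+1$ or $j > n_m$. If $m+1\in R$, then applying $\pi_m$ to the submatrix produces a zero row, and if some $j\in C$ has $j>n_m$ it produces a zero column; in either case $\pi_m(P) = 0 \in I_{\sigma_m}$. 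In the remaining case $R\subseteq\{1,\dots,m\}$ and $C\subseteq\{1,\dots,n_m\}$, so $\pi_m$ fixes $P$. Put $k' \coloneq \max R \le m$ and $l' \coloneq \max C \le n_m$; then $P$ is a minor of $M \coloneq [x_{i,j}]_{i\le k',\ j\le l'}$ of size $|R|$. By fact (i) and the witnessing inequalities $k\ge k'$, $l\ge l'$ we get $|R| = r(\sigma_{m+1})_{k,l}+1 \ge r(\sigma_{m+1})_{k',l'}+1$, and by the key observation $r(\sigma_{m+1})_{k',l'} = r(\sigma_m)_{k',l'}$. Thus $P$ is a minor of $M$ of size at least $r(\sigma_m)_{k',l'}+1$, so by fact (ii) it lies in the ideal generated by the size-$(r(\sigma_m)_{k',l'}+1)$ minors of $M$; these are among the defining generators of $I_{\sigma_m}$ associated to the pair $(k',l')$, valid since $k'\le m$ and $l'\le n_m$. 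Hence $\pi_m(P)\in I_{\sigma_m}$.

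The only substantive ingredient is the rank-matrix identity; the rest is bookkeeping with minors. The step to watch is the reduction, in the non-vanishing case of the $\pi_m$ argument, to the tight pair $k' = \max R$, $l' = \max C$: this is precisely what guarantees $k'\le m$ and $l'\le n_m$, so that the rank-matrix identity applies and $P$ can be matched against a defining minor of $I_{\sigma_m}$. Choosing a non-tight pair such as $k'=m$, $l'=n_m$ would in general fail, since $r(\sigma_m)_{m,n_m} = m$ can exceed $|R|-1$.
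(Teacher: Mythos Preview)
Your proof is correct and follows the same opening moves as the paper --- the observation that $[\sigma_m]$ sits as the upper-left block of $[\sigma_{m+1}]$ gives the rank-matrix identity, and this immediately handles $\iota_m(I_{\sigma_m})\subseteq I_{\sigma_{m+1}}$ exactly as the paper does. For the second inclusion, however, your argument is genuinely cleaner than the paper's. The paper distinguishes seven cases according to the position of the defining pair $(i,j)$ relative to $m$, $\max\sigma([m])$, and $\sigma(m+1)$; in each case it either invokes the ``situation $(\dagger)$'' vanishing or locates the appropriate entry of $r(\sigma_m)$ by hand (sometimes $r$, sometimes $r-1$) and, in the latter cases, expands along a row or column. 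Your single move --- replace the original pair $(k,l)$ by the tight pair $(k',l')=(\max R,\max C)$, then combine monotonicity of the rank function with the standard containment of determinantal ideals $I_{t+1}\subseteq I_t$ --- absorbs all seven cases at once. The paper's case analysis makes the geometry of the rank jump from $\sigma_m$ to $\sigma_{m+1}$ more visible, but your version is shorter, uniform, and requires no tracking of where the new $1$ lands.
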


\begin{proof}
Recall that $I_{\sigma_m}$ is generated by minors of size determined by the rank matrix $r(\sigma_m)$. Suppose $f$ is one of these generators. Then, there exists some $1 \leq i \leq m$ and $1 \leq j \leq \max \sigma([m])$ such that $f$ is a minor of size $r + 1$ of the submatrix of variables $[x_{\alpha , \beta}]_{\, 1 \leq \alpha \leq i ,\, 1 \leq \beta \leq j}$, where $r = r(\sigma_m)_{i , j}$. Since $[\sigma_m]$ is an upper left submatrix of 
$[\sigma_{m+1}]$, the $(i , j)$ entry of the rank matrix $r(\sigma_{m + 1})$ is also $r$. So, $f$ is also a generator of $I_{\sigma_{m + 1}}$. Therefore, $\iota_m(f) = f \in I_{\sigma_{m + 1}}$, and we obtain the first desired inclusion. 

Conversely, let $f$ be a minor generator of $I_{\sigma_{m + 1}}$. Then, as above, there exists some $1 \leq i \leq m+1$ 
and $1 \leq j \leq \max \sigma([m + 1])$ such that $f$ is a minor of a submatrix $U$ of size $r + 1$ in the variable matrix $X_{m + 1} \coloneq [x_{\alpha , \beta} \mid 1 \leq \alpha \leq m + 1,\, 1 \leq \beta \leq 
\max \sigma([m + 1])]$, 
where $r$ is the $(i , j)$ entry of the rank matrix $r(\sigma_{m + 1})$. Note that, if the bottom row of $U$ has index $m+1$ or if the last column of $U$ has index strictly greater than $\max \sigma([m])$, then expanding $f$ along the bottom row or down the last column, respectively, shows that $\pi_m(f) = 0 \in I_{\sigma_m}$. Let us call this situation \hypertarget{situation: dagger}{($\dagger$)}. We now consider several cases. 

Either $\sigma(m+1) < \max \sigma([m])$ or $\sigma([m]) < \sigma(m+1)$. In the first situation, there are five cases. In the second situation, there are two more cases to consider.

\NiceMatrixOptions
{
    custom-line ={command= H, tikz= dashed, width= 1mm}, 
    custom-line = {letter= I, tikz= dashed, width= 1mm}, 
}
\begin{figure}[htp]
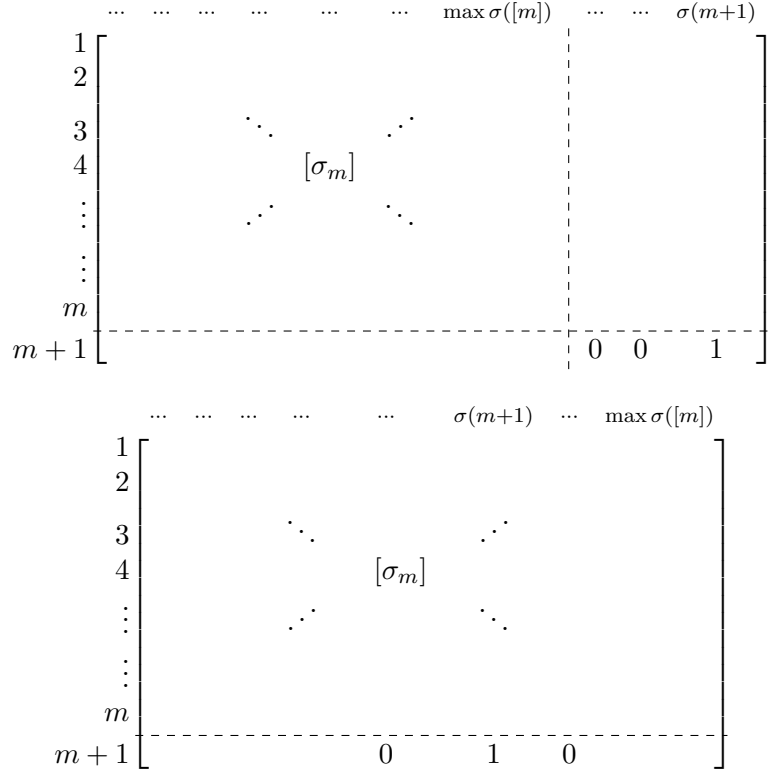

\begin{equation*}
\begin{bNiceArray}{cccccccIccc}[
  first-row,code-for-first-row=\scriptstyle,
  first-col,code-for-last-col=\scriptstyle,
]
        & \cdots & \cdots & \cdots & \cdots & \cdots & \cdots & \max \sigma([m]) & \cdots & \cdots & \sigma(m+1)\\
1       &   &   &           &               &   &   &   &   &   &   \\
2       &   &   &           &               &   &   &   &   &   &   \\
3       &   &   &     &      \ddots         &   &  \iddots &   &   &   &   \\
4       &   &   &           &     & [\sigma_m] &   &   &   &   &   \\
\vdots  &   &   &     &    \iddots           &   & \ddots &   &   &   &   \\
\vdots  &   &   &           &               &   &   &  &   &   &   \\
m       &   &   &           &               &   &   &   &  &   &  \\ \H
m+1     &   &   &           &               &   &   &   & 0  & 0 & 1 \\
\end{bNiceArray}
\end{equation*}

\begin{equation*}
\begin{bNiceArray}{cccccccc}[
  first-row,code-for-first-row=\scriptstyle,
  first-col,code-for-last-col=\scriptstyle,
]
  & \cdots  & \cdots & \cdots & \cdots & \cdots & \sigma(m+1) & \cdots & \max \sigma([m])\\
1 &   &   &   &   &   &   &  &  \\
2 &   &   &   &   &   &   &  & \\
3 &   &   &   & \ddots  &   &  \iddots &   &  \\
4 &   &   &   &   & \quad[\sigma_m] &   &  &  \\
\vdots &   &   &   &  \iddots &  & \ddots &  &  \\
\vdots &   &   &   &   &   &   &  & \\
m &   &   &   &   &   &   &  &  \\ \H
m+1 &   &   &   &   &  0 &  1 & 0 & \\
\end{bNiceArray}
\end{equation*}
\caption{Pictoral description of casework.}
\end{figure}



\begin{enumerate}[label={(\emph{Case \arabic*})}]
\item Suppose $1 \leq i \leq m$ and $1 \leq j \leq \max \sigma([m])$. Then, $U$ is contained in $X_m$, and so there is an $r$ in the $(i , j)$ entry of the rank matrix $r(\sigma_m)$ as well. Therefore, $f$ is also a generator of $I_{\sigma_m}$. It must be that $\pi_m(f)=f \in I_{\sigma_m}$.

\item Suppose $1 \leq i \leq m$ and $j > \max \sigma([m])$ (in particular, $\sigma(m+1)>\max \sigma([m])$). Then, there is an $r$ in the $(i , \max \sigma([m]))$ entry of the rank matrix $r(\sigma_m)$ too. So, either we are in situation \hyperlink{situation: dagger}{($\dagger$)}, or $U$ is contained in $X_m$, and so $f$ is also a generator of $I_{\sigma_m}$, implying that $\pi_m(f)=f \in I_{\sigma_m}$.

\item\label{case 3} Suppose that $i = m + 1$, $\sigma(m + 1) > \max \sigma([m])$, and $1 \leq j \leq \max \sigma([m])$. Then, there is an $r$ in the $(i - 1 , j)$ entry of the rank matrix $r(\sigma_m)$. We must be in situation \hyperlink{situation: dagger}{($\dagger$)}, or $U$ is contained in $X_m$, and so $f$ is also a generator of $I_{\sigma_m}$, implying $\pi_m(f) =f\in I_{\sigma_m}$.


\item Suppose that $i = m + 1$, $\sigma(m + 1) > \max \sigma([m])$, and $\max \sigma([m]) < j <\sigma(m+1)$. Then, there is a $r$ in the $(i - 1 , \max \sigma([m]))$ entry of the rank matrix $r(\sigma_m)$, and the desired result follows from a similar argument to \ref{case 3}.

\item\label{case 5} Suppose that $i = m + 1$, $\sigma(m + 1) > \max \sigma([m])$, and $j = \sigma(m + 1)$. Then, there is a $r - 1$ in the $(i - 1 , \max \sigma([m]))$ entry of the rank matrix of $\sigma_m$. So, either we are in situation \hyperlink{situation: dagger}{($\dagger$)}, or $U$ is contained in $X_m$. If the latter is true, expanding along any row or column shows that $f$ is a $T_m$-linear combination of minors of size $r$, which are in the upper left submatrix of $X_m$ with lower right corner $(i-1,j)$ (and are therefore generators of $I_{\sigma_m}$). This implies that $\pi_m(f) =f\in I_{\sigma_m}$ as well.

\item Suppose that $i = m + 1$, $\sigma(m + 1) < \max \sigma([m])$, and $1 \leq j < \sigma(m + 1)$. The result then follows exactly as in \ref{case 3}. 

\item Suppose $i = m + 1$, $\sigma(m + 1) < \max \sigma([m])$, and $\sigma(m + 1) \leq j \leq \max \sigma([m])$. Then, there is a $r - 1$ in the $(i - 1 , j)$ entry of the rank matrix $r(\sigma_m)$. So, either we are in situation \hyperlink{situation: dagger}{($\dagger$)}, or $U$ is contained in $X_m$, and the desired result follows as in \ref{case 5}. \qedhere
\end{enumerate}
\end{proof}


\begin{lemma}\label{lem: partial det ideals approximate big det ideal}
    We have that $\bigcup_{n \in \nn} \eta_n(I_{\sigma_n}) T = I_\sigma$. 
\end{lemma}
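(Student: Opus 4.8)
The plan is to prove the two inclusions separately; the whole argument reduces to a single bookkeeping fact about rank matrices.

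\textbf{Step 1 (the rank matrices agree on the overlap).} I would first establish that for every $n \in \nn$ and every pair of indices $i \le n$, $j \le \max\sigma([n])$, one has $r(\sigma_n)_{i,j} = r(\sigma)_{i,j}$. The key point is that $[\sigma_n]$ is precisely the restriction of the infinite permutation matrix $[\sigma]$ to rows $1,\dots,n$ and columns $1,\dots,\max\sigma([n])$: any $1$ of $[\sigma]$ lying in one of the first $n$ rows, say row $t\le n$, sits in column $\sigma(t)\le\max\sigma([n])$, so the truncation discards no $1$'s. Hence the number of $1$'s weakly northwest of $(i,j)$ is the same whether counted in $[\sigma]$ or in $[\sigma_n]$. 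As a consequence, for $i \le n$ and $j \le \max\sigma([n])$, a size-$(r(\sigma)_{i,j}+1)$ minor of $[x_{\alpha,\beta}]_{\alpha\le i,\,\beta\le j}$ is simultaneously a defining generator of $I_{\sigma_n}$ in $T_n$ and of $I_\sigma$ in $T$.

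\textbf{Step 2 ($\subseteq$).} To show $\bigcup_n \eta_n(I_{\sigma_n})T \subseteq I_\sigma$ it suffices to check $\eta_n(I_{\sigma_n})\subseteq I_\sigma$ for each $n$, and for that it suffices to send every minor generator of $I_{\sigma_n}$ into $I_\sigma$. Such a generator has the form of Step 1 with $i\le n$ and $j\le\max\sigma([n])$, so by Step 1 it is already a defining generator of $I_\sigma$ and $\eta_n$ fixes it.

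\textbf{Step 3 ($\supseteq$).} Conversely, let $g$ be a defining generator of $I_\sigma$, i.e.\ a size-$(r(\sigma)_{k,l}+1)$ minor of $[x_{i,j}]_{i\le k,\,j\le l}$. Since $\sigma$ is injective, $\sigma([n])$ has $n$ elements, so $\max\sigma([n])\ge n$; choosing any $n\ge\max(k,l)$ yields $k\le n$ and $l\le n\le\max\sigma([n])$. By Step 1, $r(\sigma)_{k,l}=r(\sigma_n)_{k,l}$, so viewing $g$ inside $T_n$ exhibits it as a defining generator of $I_{\sigma_n}$; thus $g=\eta_n(g)\in\eta_n(I_{\sigma_n})T$, hence $g$ lies in the union. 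Combining Steps 2 and 3 gives the stated equality. (Using the commuting triangles $\eta_{n+1}\circ\iota_n=\eta_n$ together with Lemma \ref{lem: iota and pi preserve schubert ideals}, one sees moreover that the union is an increasing union of ideals, though this is not needed for the set equality.)

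\textbf{Expected obstacle.} Nothing here is deep: the only place that needs genuine care is Step 1 --- verifying that truncating $[\sigma]$ to the correct rectangular block loses none of its $1$'s, so that the two rank matrices really do coincide on the common index range. Once that is pinned down, Steps 2 and 3 are routine matching-of-generators arguments.
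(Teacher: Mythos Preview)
Your proof is correct and follows the same generator-matching strategy as the paper's argument: both directions are obtained by identifying the minor generators of $I_{\sigma_n}$ with minor generators of $I_\sigma$. Your Step~1, which explicitly verifies $r(\sigma_n)_{i,j}=r(\sigma)_{i,j}$ on the common index range, is exactly the fact the paper uses implicitly when it asserts that minors arising in $I_{\sigma_{m_i}}$ are automatically generators of $I_\sigma$ (and conversely); you have simply made this step precise.
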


\begin{proof}
    Recall from the proof of Lemma \ref{lem: iota and pi preserve schubert ideals} that $X_{m} = [x_{\alpha , \beta} \mid 1 \leq \alpha \leq m,\, 1 \leq \beta \leq 
    \max \sigma([m])]$. Suppose that $f \in \bigcup_{n \in \nn} \eta_n(I_{\sigma_n}) T$. Then, $f$ is a finite sum of terms of the form 
    $$f = \sum_{i=1}^M t_if_i$$ 
    for finite $M$, where $t_i \in T$ and $f_i \in \eta_{m_i}(I_{\sigma_{m_i}})T$. 
    Now, each $f_i$ is in the image of $\eta_{m_i}(I_{\sigma_{m_i}})$ for some $m_i$, so must be itself of the form 
    $$f_i = \sum_{j=1}^{N_i} r_jg_j$$
    for finite $N_i$, where $r_j \in T_{m_i}$ and $g_j \in I_{\sigma_{m_i}}.$
    That is, each $g_j$ is a minor of the variable matrix $X_{m_i}$. But then, $\eta_{m_i}(g_j) = g_j \in T$, too. In particular, each $g_j$ must be a minor generator of $I_\sigma$, the ideal of all such minors, hence $f \in I_\sigma$.

    Now, suppose that $h \in I_{\sigma}$. Again, $h$ must be a finite sum of terms $t_ih_i$ for $t_i \in T$ and minor generators $h_i \in I_\sigma$. Since each $h_i$ is a minor of some (finite) matrix of variables, say $X_{m'_i}$, we then have that $h_i = \eta_{m'_i}(h_i) \in \eta_{m'_i}(I_{m'_i})T$ for some $m'_i \in \nn$. Hence, $h \in \bigcup_{n \in \nn} \eta_n(I_{\sigma_n}) T.$
\end{proof}

\begin{definition}
    A term order on $T = k[x_{i , j} \mid i , j \in \nn]$ is called \emph{antidiagonal} if the restriction $<_m$ of $<$ to $T_m$ for each $m \in \nn$ is antidiagonal. 
\end{definition}

\begin{example}\label{example:infiniteSchubIdeal2}
Continuing with Example \ref{example:infiniteSchubIdeal} and using an antidiagonal term order, we have that
\begin{align*}
\text{in}_{<_m}(I_{\sigma_m})=\left\langle x_{1,n}x_{2,n-1}\ldots x_{n,1}\mid n\leq m\text{ and } n\in2\nn\right\rangle. 
\end{align*}
We see that $\Delta_{\sigma}$ is an infinite simplicial complex with no facets.
\end{example}

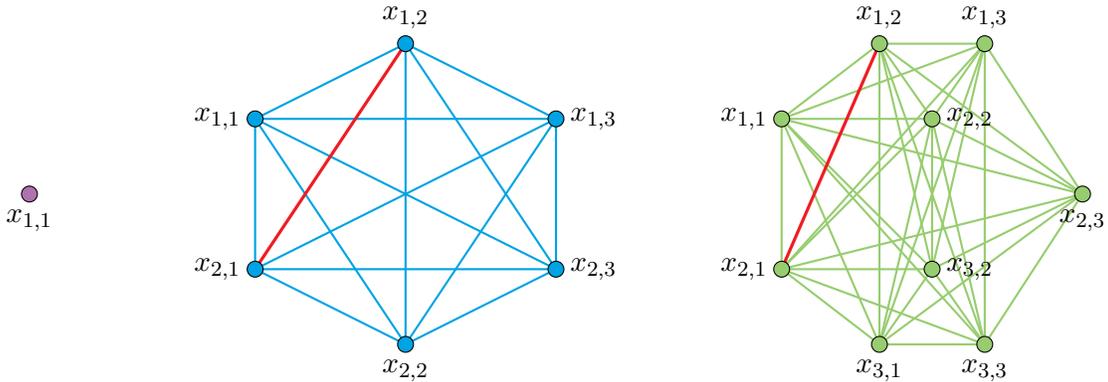
\begin{figure}[H]
\begin{center}
\begin{tikzpicture}
\draw[fill=Orchid] (-5,0) circle (3pt);
\node at (-5,-.35) {$x_{1,1}$};
\draw[-, thick, Cerulean] (2,1) -- (0,2);
\draw[-, thick, Cerulean] (2,1) -- (-2,1);
\draw[-, thick, Cerulean] (2,1) -- (-2,-1);
\draw[-, thick, Cerulean] (2,1) -- (0,-2);
\draw[-, thick, Cerulean] (2,1) -- (2,-1);
\draw[-, thick, Cerulean] (0,2) -- (-2,1);
\draw[-, thick, Cerulean] (0,2) -- (0,-2);
\draw[-, thick, Cerulean] (0,2) -- (2,-1);
\draw[-, thick, Cerulean] (-2,1) -- (-2,-1);
\draw[-, thick, Cerulean] (-2,1) -- (0,-2);
\draw[-, thick, Cerulean] (-2,1) -- (2,-1);
\draw[-, thick, Cerulean] (-2,-1) -- (0,-2);
\draw[-, thick, Cerulean] (-2,-1) -- (2,-1);
\draw[-, thick, Cerulean] (0,-2) -- (2,-1);
\draw[-, very thick, Red] (-2,-1) -- (0,2);
\draw[fill=Cerulean] (2,1) circle (3pt);
\draw[fill=Cerulean] (0,2) circle (3pt);
\draw[fill=Cerulean] (-2,1) circle (3pt);
\draw[fill=Cerulean] (-2,-1) circle (3pt);
\draw[fill=Cerulean] (0,-2) circle (3pt);
\draw[fill=Cerulean] (2,-1) circle (3pt);
\node at (-2.5,1) {$x_{1,1}$};
\node at (0,2.35) {$x_{1,2}$};
\node at (2.5,1) {$x_{1,3}$};
\node at (-2.5,-1) {$x_{2,1}$};
\node at (0,-2.35) {$x_{2,2}$};
\node at (2.5,-1) {$x_{2,3}$};
\draw[-, thick, YellowGreen] (5,1) -- (6.3,2);
\draw[-, thick, YellowGreen] (5,1) -- (7.7,2);
\draw[-, thick, YellowGreen] (5,1) -- (5,-1);
\draw[-, thick, YellowGreen] (5,1) -- (7,1);
\draw[-, thick, YellowGreen] (5,1) -- (9,0);
\draw[-, thick, YellowGreen] (5,1) -- (6.3,-2);
\draw[-, thick, YellowGreen] (5,1) -- (7,-1);
\draw[-, thick, YellowGreen] (5,1) -- (7.7,-2);

\draw[-, thick, YellowGreen] (6.3,2) -- (7.7,2);
\draw[-, thick, YellowGreen] (6.3,2) -- (7,1);
\draw[-, thick, YellowGreen] (6.3,2) -- (9,0);
\draw[-, thick, YellowGreen] (6.3,2) -- (6.3,-2);
\draw[-, thick, YellowGreen] (6.3,2) -- (7,-1);
\draw[-, thick, YellowGreen] (6.3,2) -- (7.7,-2);

\draw[-, thick, YellowGreen] (7.7,2) -- (5,-1);
\draw[-, thick, YellowGreen] (7.7,2) -- (7,1);
\draw[-, thick, YellowGreen] (7.7,2) -- (9,0);
\draw[-, thick, YellowGreen] (7.7,2) -- (6.3,-2);
\draw[-, thick, YellowGreen] (7.7,2) -- (7,-1);
\draw[-, thick, YellowGreen] (7.7,2) -- (7.7,-2);

\draw[-, thick, YellowGreen] (5,-1) -- (7,1);
\draw[-, thick, YellowGreen] (5,-1) -- (9,0);
\draw[-, thick, YellowGreen] (5,-1) -- (6.3,-2);
\draw[-, thick, YellowGreen] (5,-1) -- (7,-1);
\draw[-, thick, YellowGreen] (5,-1) -- (7.7,-2);

\draw[-, thick, YellowGreen] (7,1) -- (9,0);
\draw[-, thick, YellowGreen] (7,1) -- (6.3,-2);
\draw[-, thick, YellowGreen] (7,1) -- (7,-1);
\draw[-, thick, YellowGreen] (7,1) -- (7.7,-2);

\draw[-, thick, YellowGreen] (9,0) -- (6.3,-2);
\draw[-, thick, YellowGreen] (9,0) -- (7,-1);
\draw[-, thick, YellowGreen] (9,0) -- (7.7,-2);

\draw[-, thick, YellowGreen] (6.3,-2) -- (7,-1);
\draw[-, thick, YellowGreen] (6.3,-2) -- (7.7,-2);

\draw[-, thick, YellowGreen] (7,-1) -- (7.7,-2);

\draw[-, very thick, Red] (5,-1) -- (6.3,2);
\draw[fill=YellowGreen] (7,1) circle (3pt); 
\draw[fill=YellowGreen] (7.7,2) circle (3pt); 
\draw[fill=YellowGreen] (6.3,2) circle (3pt); 
\draw[fill=YellowGreen] (5,1) circle (3pt); 
\draw[fill=YellowGreen] (7,-1) circle (3pt); 
\draw[fill=YellowGreen] (7.7,-2) circle (3pt); 
\draw[fill=YellowGreen] (6.3,-2) circle (3pt); 
\draw[fill=YellowGreen] (5,-1) circle (3pt); 
\draw[fill=YellowGreen] (9,0) circle (3pt); 
\node at (4.5,1) {$x_{1,1}$};
\node at (6.3,2.35) {$x_{1,2}$};
\node at (7.7,2.35) {$x_{1,3}$};
\node at (4.5,-1) {$x_{2,1}$};
\node at (9,-.35) {$x_{2,3}$};
\node at (7.5,1) {$x_{2,2}$};
\node at (6.3,-2.35) {$x_{3,1}$};
\node at (7.7,-2.35) {$x_{3,3}$};
\node at (7.5,-1) {$x_{3,2}$};
\end{tikzpicture}
\caption{The $1$-skeletons for $\Delta_{\sigma_m}$ for $m\in[3]$ from Example \ref{example:infiniteSchubIdeal2}, respectively. A red edge in the complex denotes a missing edge (counterintuitively).}
\end{center}
\end{figure}

\begin{proposition}\label{prop: infinite antidiagonal term order}
    There exists an antidiagonal term order on $T = k[x_{i , j} \mid i , j \in \nn]$. 
\end{proposition}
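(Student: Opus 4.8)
The plan is to exhibit an explicit term order on $T$ and then verify the two nonautomatic facts about it: that it really is a term order in the sense of Section~\ref{sec: Applications to Initial Ideals} (the delicate point being (MO3), since in infinitely many variables well-foundedness must be checked by hand), and that its restriction to each $T_m$ makes the antidiagonal term the leading term of every minor.

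First I would fix the grading weight $w$ on $T$ determined by $w(x_{i,j}) \coloneq i + j - 2^{-(i+j)}$ and extended additively to monomials. Its two relevant features are: (i) $w(x_{i,j}) \geq 7/4 > 1$ for all $i,j$, so $w(\mu) = 0$ iff $\mu = 1$; and (ii) since $w(x_{i,j}) \geq i+j-1$, for every real number $W$ only finitely many variables satisfy $w(x_{i,j}) \leq W$, hence (as $w \geq 1$ on variables also bounds the degree) only finitely many monomials $\mu$ satisfy $w(\mu) \leq W$. Next I would fix any term order $<'$ on $T$ satisfying (MO1)--(MO3) — for instance the lexicographic order recalled in Section~\ref{sec: Applications to Initial Ideals}, which is a well-order because it compares exponent vectors starting from the largest-indexed variable and each monomial involves only finitely many variables. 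Finally I would define $<$ on monomials of $T$ by setting $\mu < \nu$ iff either $w(\mu) < w(\nu)$, or $w(\mu) = w(\nu)$ and $\mu <' \nu$.

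I would then check that $<$ is a term order: (MO1) is feature (i); (MO2) holds because $w$ is additive and $<'$ satisfies (MO2); and (MO3) holds because, by feature (ii), any nonempty set of monomials contains only finitely many of least $w$-value, among which the $<'$-least one is the minimum. For the antidiagonal property, let $D$ be the minor of $[x_{i,j}]$ on rows $i_1 < \cdots < i_k$ and columns $j_1 < \cdots < j_k$. Up to sign its monomials are the pairwise distinct $P_\sigma \coloneq \prod_{a=1}^{k} x_{i_a,\, j_{\sigma(a)}}$ for $\sigma \in S_k$, and since $\sum_{a} j_{\sigma(a)} = \sum_b j_b$ is independent of $\sigma$ we get $w(P_\sigma) = \left(\sum_a i_a + \sum_b j_b\right) - \sum_{a=1}^{k} 2^{-i_a} 2^{-j_{\sigma(a)}}$. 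Thus maximizing $w(P_\sigma)$ over $\sigma$ amounts to minimizing $\sum_{a} 2^{-i_a} 2^{-j_{\sigma(a)}}$; as $(2^{-i_1}, \dots, 2^{-i_k})$ and $(2^{-j_1}, \dots, 2^{-j_k})$ are strictly decreasing positive sequences, the strict rearrangement inequality shows this minimum is attained only at the reversing permutation $\sigma_0(a) = k+1-a$. Hence $P_{\sigma_0} = x_{i_1, j_k} x_{i_2, j_{k-1}} \cdots x_{i_k, j_1}$, the antidiagonal term, has strictly the largest $w$-value among the monomials of $D$, so it is the $<$-leading term of $D$. Since this applies to every minor of every finite rectangular submatrix of $[x_{i,j}]$, the restriction $<_m$ of $<$ to $T_m$ is antidiagonal for every $m$, and therefore $<$ is an antidiagonal term order on $T$.

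The step I expect to be the real obstacle is (MO3). A naive attempt to force the antidiagonal to win by a plain lexicographic order fails: doing so requires the underlying order on the variables $x_{i,j}$ to give priority to one of the two matrix indices, and that immediately yields an infinite strictly descending chain of variables, so the resulting monomial order is not well-founded. The device that repairs this is to encode the ``antidiagonal-selecting'' information entirely inside a \emph{proper} grading weight — one for which each sublevel set $\{\mu : w(\mu) \leq W\}$ is finite — and to let an auxiliary term order break ties only among the finitely many monomials of a given weight; then well-foundedness is automatic while the antidiagonal is still singled out inside every minor.
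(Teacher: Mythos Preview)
Your proof is correct and takes a genuinely different route from the paper's.

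The paper constructs a pure lexicographic order: it well-orders the variables by first comparing which antidiagonal $Q_m = \{x_{i,j} : j-i = m\}$ they lie on (in the order $Q_0 < Q_1 < Q_{-1} < Q_2 < \cdots$, giving order-type $\omega^2$) and then by position along that antidiagonal, and takes lex with respect to this variable order. Antidiagonality is then proved by induction on the size of the minor via cofactor expansion along the first row or column, using that the corner variable $x_{i_{k},j_1}$ or $x_{i_1,j_k}$ dominates all other variables appearing in the minor.

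Your approach instead packages the antidiagonal-selecting information into an additive weight $w(x_{i,j}) = i+j - 2^{-(i+j)}$ with finite sublevel sets, and appeals to the strict rearrangement inequality to see directly that the antidiagonal monomial has strictly largest weight among all terms of a given minor; the auxiliary order $<'$ is used only to guarantee (MO3). This is arguably cleaner: well-foundedness is automatic from the finiteness of weight-sublevel sets, and the antidiagonal property needs no induction. One small remark: your closing diagnosis that ``a plain lexicographic order fails'' because it would force an infinite descending chain of variables is not quite right---the paper's diagonal-by-diagonal variable order \emph{is} a well-order and the resulting lex order works. So lex is a viable alternative; your weighted construction is simply a different, equally valid, way to get there.
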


\begin{proof}
We define an order on the variables in the matrix $[ x_{i , j} \mid i , j \in \nn]$ as follows. For $m \in \zz$, denote by $Q_m$ the $m$th diagonal $\{ x_{i , j} \mid j - i = m \}$, so that e.g. $Q_0$ is the diagonal and $Q_1$ is the super diagonal of the variable matrix. For a given $Q_m$, we order $x_{i , j} < x_{i + 1 , j + 1}$. Additionally, we order $Q_0 < Q_1 < Q_{-1} < Q_2 < Q_{-2} < \ldots$. Note that this is a well-order which has order-type the ordinal $\omega^2$. We now order the monomials of $k[x_{i , j} \mid i , j \in \nn]$ using the lexicographical order with respect to this variable order. 

To show that this is an antidiagonal term order, it suffices to show that for any minor in the variable matrix $[x_{i, j} \mid i , j \in \nn]$, the leading term is the antidiagonal term. We prove this by induction on minor size. The result is trivial for minors of size $1$. So, suppose the result holds for minors of size $n$. Let $i_1 < \ldots < i_{n + 1}$ and $j_1 < \ldots < j_{n + 1}$ be the row and column indices of a minor $M$ of size $n + 1$. First, suppose that $| j_1 - i_{n + 1} | \geq | j_{n + 1} - i_1 |$. Then, $x_{i_{n + 1} , j_1} > x_{i , j}$ for any $x_{i , j}$ involved in $M$. We expand the minor along the first column, so that it is equal to $M = x_{i_1 , j_1} M_1 \pm \ldots \pm x_{i_{n + 1} , j_1} M_{n + 1}$, where each $M_i$ is a minor of size $n$. Note that the only terms of $M$ which involve the variable $x_{i_{n + 1} , j_1}$ appear in the terms of $x_{i_{n + 1} , j_1} M_{n + 1}$. Since the term order is lexicographic, this implies that the leading term of $M$ must be a term of $x_{i_{n + 1} , j_1} M_{n + 1}$. By induction, the leading term of $M_{n + 1}$ is the antidiagonal term. Hence, the leading term of $M$ is also the antidiagonal. On the other hand, if $| j_1 - i_{n + 1} | < | j_{n + 1} - i_1 |$, then expanding along the top row and applying the same logic, the desired result again follows. 
\end{proof}

\begin{proof}[Proof of Theorem \ref{thm: main thm msv}]
By Lemma \ref{lem: iota and pi preserve schubert ideals}, we have that $\iota_m(I_{\sigma_m}) \subseteq I_{\sigma_{m + 1}}$ and $\pi_m(I_{\sigma_{m + 1}}) \subseteq I_{\sigma_{m}}$. Furthermore, if $<$ is an antidiagonal term order on $T$, let $<_m$ be the restriction of $<$ to $T_m$. Since $<_m$ is an antidiagonal term order for $T_m$ (by definition), it follows from \cite[Theorem B]{knutson2005grobner} that $T_m  /\text{in}_{<_m}(I_{\sigma_m})$ is a Noetherian Cohen-Macaulay $k$-algebra. Hence, by Theorem \ref{thm: main thm rings}, $T / \text{in}_<(\bigcup_{n \in \nn} \eta_n(I_{\sigma_n}) T)$ is a flat direct limit of the direct system $\{ T_m  /\text{in}_{<_m}(I_{\sigma_m}) , \overline{\iota}_m \}_{m \in \nn}$ of Noetherian Cohen-Macaulay $k$-algebras. Since $\bigcup_{n \in \nn} \eta_n(I_{\sigma_n}) T = I_\sigma$ by Lemma \ref{lem: partial det ideals approximate big det ideal}, the desired result follows. 
\end{proof}

\bibliographystyle{alpha}

\bibliography{references}

\end{document}